\providecommand{\noopsort}[1]{} 
\g@addto@macro{\UrlBreaks}{\UrlOrds}
\crefname{Th}{Theorem}{Theorems}
\crefname{Prop}{Proposition}{Propositions}
\crefname{Lemma}{Lemma}{Lemmas}
\crefname{Cor}{Corollary}{Corollaries}
\crefname{Remark}{Remark}{Remarks}
\crefname{Def}{Definition}{Definitions}
\crefname{Example}{Example}{Examples}
\crefname{section}{Section}{Sections}
\newtheorem{Th}{Theorem}[section]
\newtheorem{Prop}[Th]{Proposition}
\newtheorem{Lemma}[Th]{Lemma}
\newtheorem{Cor}[Th]{Corollary}
\theoremstyle{definition}
\newtheorem{Remark}[Th]{Remark}
\newtheorem{Def}{Definition}[section]
\newtheorem{Example}{Example}[section]
\newcommand{\beq}{\begin{equation}}
\newcommand{\eeq}{\end{equation}}
\def\scalar(#1,#2){(#1\mid#2)}
\newcommand{\cb}{{\cal B}}
\newcommand{\cc}{{\cal C}}
\newcommand{\xbm}{(X,{\cal B},\mu)}
\newcommand{\ycn}{(Y,{\cal C},\nu)}
\newcommand{\ot}{\otimes}
\newcommand{\ov}{\overline}
\newcommand{\la}{\lambda}
\newcommand{\T}{{\mathbb{T}}}
\newcommand{\C}{{\mathbb{C}}}
\newcommand{\Z}{{\mathbb{Z}}}
\newcommand{\N}{{\mathbb{N}}}
\newcommand{\vep}{\varepsilon}
\newcommand{\va}{\varphi}
\newcommand{\mob}{\boldsymbol{\mu}}
\newcommand{\lio}{\boldsymbol{\lambda}}
\newcommand{\raz}{\mathbbm{1}}
\newcommand*\samethanks[1][\value{footnote}]{\footnotemark[#1]}
\title{Substitutions and M\"obius disjointness}
\author{S.\ Ferenczi \and J.\ Ku\l aga-Przymus\thanks{Research supported by Narodowe Centrum Nauki grant  UMO-2014/15/B/ST1/03736.} \and M.\ Lema\'nczyk\samethanks \and C.\ Mauduit}
\begin{document}
\bibliographystyle{siam}
\sloppy
\maketitle \normalsize

\thispagestyle{empty}

\begin{abstract}
We show that Sarnak's conjecture on M\"obius disjointness holds for all subshifts given by bijective substitutions and some other similar dynamical systems, e.g.\ those generated by Rudin-Shapiro type sequences.
\end{abstract}

\tableofcontents

\section{Introduction}

In 2010, Sarnak \cite{sarnak-lectures} formulated the following
conjecture: for each zero entropy topological dynamical system $(T,X)$ ($X$ is a compact metric space and $T$ is a homeomorphism of $X$), each $f\in C(X)$ and $x\in X$, we have
\beq\label{sarnakcon}
\frac1N\sum_{n\leq N}f(T^nx)\mob(n)\to 0,
\eeq
where $\mob\colon \N\to \C$ is the M\"obius function defined by $\mob(1)=1$, $\mob(p_1\cdot\ldots\cdot p_k)=(-1)^k$ for $k$ different prime numbers $p_i$, and $\mob(n)=0$ in the remaining cases. The conjecture has already been proved in numerous cases, e.g.\ \cite{Abdalaoui:2013rm,MR3121731,MR3043150,MR3095150,MR2986954,Do-Ka,Fe-Ma,
MR2981162,MR2877066,Kuaga-Przymus:2013fk,MR3347317,
MR2680394,MaRi2013,Ta1,Ve}.

The aim of the present paper is to show that Sarnak's conjecture holds for some classes of dynamical systems of number theoretic origin. Namely, for all dynamical systems given by bijective substitutions (a subclass of substitutions of constant length)~\cite{MR2590264} and also for other related systems given by some automata (e.g.\ by the sequences of the Rudin-Shapiro type).
Our approach is purely ergodic and the main tool is the theory of compact group extensions of rotations. Throughout, we deal with uniquely ergodic homeomorphisms, i.e.\ homeomorphisms possessing exactly one invariant measure (which has to be ergodic).

Given a bijective substitution $\theta$ over a finite alphabet $A$, we define its group cover substitution $\ov{\theta}$ over a subgroup $G$ of permutations of $A$, which hence carries an additional natural group structure. Since the dynamical system $(S,X(\theta))$\footnote{Here and in what follows, $S$ stands for the left shift on a closed $S$-invariant subset  of the space of two-sided sequences, i.e.\ $(S,X(\theta))$ is an example of a \emph{subshift}.} given by $\theta$ is a topological factor of the dynamical system $(S,X(\ov{\theta}))$ given by $\ov{\theta}$, it suffices to show that Sarnak's conjecture holds for $(S,X(\ov{\theta}))$. The group cover substitution $\ov{\theta}$ can be identified with a certain (generalized) Morse sequence $x$ and the associated dynamical system $(S,X(x))$ is a Morse system. This is where compact group extensions come into play -- each Morse dynamical system is (measure-theoretically) isomorphic to a compact group extension $(T_\psi,X\times G)$ given by a so called Morse cocycle $\psi\colon X\to G$ over a rotation~$T$, more precisely over an odometer $(T,X)$.\footnote{This and other relations between the dynamical systems described in this paragraph are illustrated in Figure~\ref{fig1}.} The main difficulty is that such compact group extensions have been studied so far mostly from the measure-theoretic point of view~\cite{MR0239047,MR937955, MR2590264} (the dynamical systems under considerations are uniquely ergodic). The underlying reason and, at the same time, the main obstacle for us is that Morse cocycles are in general not continuous. Thus, we cannot deal directly with such models -- Sarnak's conjecture requires topological systems. In order to bypass this difficulty, more tools are used. The Morse dynamical system $(S,X(x))$ turns out to have a Toeplitz dynamical system $(S,X(\widehat{x}))$ as a topological factor, which, in turn, is an almost {1-1}-extension of the odometer $(T,X)$. Moreover, the method of Toeplitz extensions~\cite{MR937955} allows us to find a dynamical system topologically isomorphic to $(S,X(x))$, which also has a form of a compact group extension: $(S_\varphi,X(\widehat{x})\times G)$ given by a continuous cocycle $\varphi\colon X(\widehat{x})\to G$. If we denote the (natural) factoring map from $(S,X(\widehat{x}))$ to $(T,X)$ by $p$, we have the following relation between the two cocycles: $\varphi=\psi\circ p$. Our goal will be to prove that Sarnak's conjecture holds for $(S_\varphi,X(\widehat{x})\times G)$.

The passage to group substitutions seems to be unavoidable for our method. Indeed, we require that the  substitution subshift has a  topological factor, determined by a Toeplitz sequence, which is an almost 1-1 extension of the maximal equicontinuous factor (the underlying odometer) and is measure-theoretically isomorphic to it. However, in \cite{MR3167382}, Section 4.4, it is proved that a bijective substitution need not have a symbolic factor which is measure-theoretically isomorphic to the maximal equicontinuous factor (this anwers a question  raised by Baake). For example, this surprising property holds for the substitution $a\mapsto aabaa$, $b\mapsto bcabb$ and $c\mapsto cbccc$ \cite{MR3167382}.

\begin{figure}[h!]
\begin{center}
	\begin{tikzpicture}[every text node part/.style={align=center}]
		\node (T) at (0,5.2) {{\bf continuous} cpt.\\ group extension\\$(S_\varphi,X(\widehat{x})\times G)$};
		\node (S) at (4,7) {Morse system\\ given by a Morse cocycle\\ (cpt. group extension) \\$(T_\psi,X\times G)$};
		\node (M) at (0,3.5) {Toeplitz system\\$(S,X(\widehat{x}))$};
		\node (B) at (0,1.9) {odometer\\ $(T,X)$};
		\node (TL) at (-4,7) {Morse system\\ given by $x$ \\ $(S,X(x))$} ;
		\draw[->] (S) edge[bend left=30] node[auto] {} (B);
		\draw[<-,dashed] (0.1,3) edge node[auto] {$\simeq$} (0.1,2.4);
		\draw[->] (M)+(-0.1,-0.5) -- (-0.1,2.4);
		\draw[->] (T) edge node[auto] {} (M);		
		\draw[->,dashed] (T) edge node[auto] {$\simeq$} (2.7,6.2);
		\draw[->] (TL) edge[bend left=-30] node[auto] {} (M);	
		\draw[->] (TL) edge node[auto] {$\simeq$} (T);	
		\draw[->,dashed] (TL) edge node[auto] {$\simeq$} (S);
	\end{tikzpicture}
\end{center}
\caption{Morse and Toeplitz dynamical systems on a diagram. Plain and dashed lines denote topological and measure-theoretical maps, respectively (all depicted systems are uniquely ergodic).}
\label{fig1}
\end{figure}
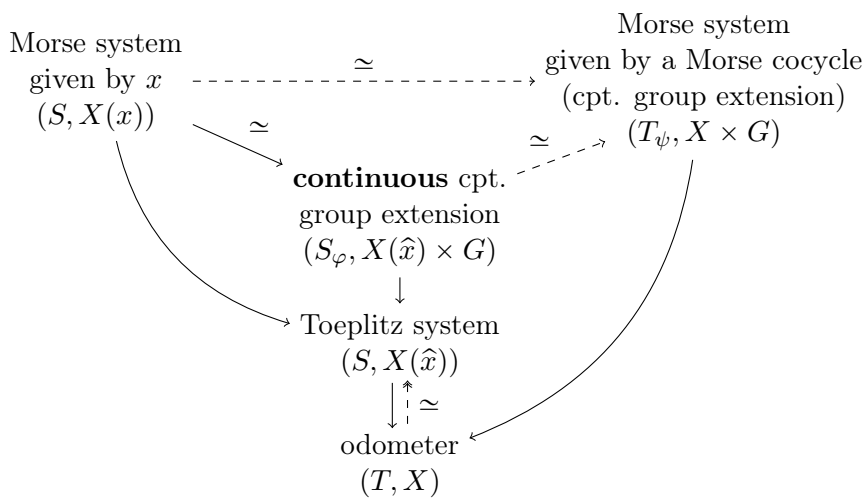

The first tool we use to deal with the continuous compact group extensions is the Katai-Bourgain-Sarnak-Ziegler criterion:

\begin{Th}[\cite{MR836415,MR2986954}, see also \cite{Ha}] \label{kbsz}
Assume that $(a_n)\subset\C$ is bounded and suppose that
$$
\frac1N\sum_{n\leq N} a_{nr}\ov{a}_{ns}\to0$$
for all sufficiently large different prime numbers $r,s$. Then
$$
\frac1N\sum_{n\leq N}a_n\lio(n)\to 0$$
for each  multiplicative\footnote{$\lio\colon\N\to\C$ is called {\em multiplicative} if $\lio(m\cdot n)=\lio(m)\cdot\lio(n)$ whenever $m,n$ are coprime. It is called \emph{aperiodic} whenever $\frac{1}{N}\sum_{n\leq N}\lio(an+b)\to 0$ for all $a,b\in\N$. The M\"obius function $\mob$ is multiplicative and aperiodic.} function $\lio\colon\N\to\C$, $|\lio|\leq1$.
\end{Th}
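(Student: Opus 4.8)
The plan is to reconstruct the (by now standard) Kátai / Bourgain--Sarnak--Ziegler argument. It suffices to fix $\vep>0$ and show that $\limsup_N\frac1N\big|\sum_{n\le N}a_n\lio(n)\big|\le\vep$. I would start from a threshold $w_0$ past which the correlation hypothesis is available for all pairs of distinct primes, and take $\mathcal P$ to be the finite set of primes in $(w_0,w]$, with $w$ so large that $L:=\sum_{p\in\mathcal P}1/p$ is as big as needed, while $\delta_1:=\sum_{p\in\mathcal P}1/p^2\le\sum_{p>w_0}1/p^2$ stays small. The first ingredient is a Turán--Kubilius type second moment bound for $\omega(n):=\#\{p\in\mathcal P:p\mid n\}$: the elementary counts $\sum_{n\le N}\omega(n)=NL+O(|\mathcal P|)$ and $\sum_{n\le N}\omega(n)^2=NL^2+NL-N\delta_1+O(|\mathcal P|^2)$ give $\frac1N\sum_{n\le N}(\omega(n)-L)^2\le L$ once $N$ is large. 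Writing $1=\frac1L\omega(n)+\frac1L(L-\omega(n))$, multiplying by $a_n\lio(n)$, summing, and estimating the second piece by Cauchy--Schwarz ($\sum_n|L-\omega(n)|\le(N\sum_n(L-\omega(n))^2)^{1/2}\le N\sqrt L$) leads to
$$\Big|\sum_{n\le N}a_n\lio(n)\Big|\le\frac1L\Big|\sum_{n\le N}\omega(n)a_n\lio(n)\Big|+\frac N{\sqrt L}.$$

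Next I would expand $\sum_{n\le N}\omega(n)a_n\lio(n)=\sum_{p\in\mathcal P}\sum_{m\le N/p}a_{pm}\lio(pm)$ and use multiplicativity, $\lio(pm)=\lio(p)\lio(m)$ for $p\nmid m$, the remaining terms (those with $p\mid m$) contributing $O(N\delta_1)$ in total since $\sum_{p\in\mathcal P}N/p^2=N\delta_1$; this rewrites the sum as $\sum_{p\in\mathcal P}\lio(p)h_p+O(N\delta_1)$ with $h_p:=\sum_{m\le N/p}\lio(m)a_{pm}$. The crucial step — and the only one with real content — is then to interchange the two summations, $\sum_{p\in\mathcal P}\lio(p)h_p=\sum_{m\le N}\lio(m)G_m$ with $G_m:=\sum_{p\in\mathcal P,\ pm\le N}\lio(p)a_{pm}$, and to apply Cauchy--Schwarz \emph{in the variable $m$}; this dumps the awkward weight $\lio(m)$ into a harmless factor $\big(\sum_{m\le N}|\lio(m)|^2\big)^{1/2}\le N^{1/2}$ and produces honest bilinear correlation sums:
$$\Big|\sum_{p\in\mathcal P}\lio(p)h_p\Big|\le N^{1/2}\Big(\sum_{m\le N}|G_m|^2\Big)^{1/2},\qquad\sum_{m\le N}|G_m|^2=\sum_{p,q\in\mathcal P}\lio(p)\ov{\lio(q)}\,C_{p,q},$$
where $C_{p,q}:=\sum_{m\le N/\max(p,q)}a_{pm}\ov{a_{qm}}$. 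This is precisely where the hypothesis enters, and in exactly the form assumed — with no residual multiplicative twist on the sum. The diagonal $p=q$ contributes at most $\sum_{p\in\mathcal P}N/p=NL$; for each of the finitely many pairs $p\ne q$ in $\mathcal P$ (all above $w_0$) one has $C_{p,q}=o(N)$, so $|C_{p,q}|\le\eta N$ for $N\ge N_0(\mathcal P,\eta)$ uniformly over the pairs. Choosing $\eta:=\vep^2L^2/(8|\mathcal P|^2)$ (legitimate, since $\mathcal P$, hence $L$ and $|\mathcal P|$, is already fixed) gives $\sum_{m\le N}|G_m|^2\le N(L+\vep^2L^2/8)$, whence $\frac1L\big|\sum_{p\in\mathcal P}\lio(p)h_p\big|\le N(1/L+\vep^2/8)^{1/2}\le\vep N/2$ provided $L\ge 8/\vep^2$. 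Feeding this back, together with $\frac N{\sqrt L}+O(N\delta_1/L)\le\vep N/2$ (valid once $L$ is large and $\delta_1$ small), yields $\big|\sum_{n\le N}a_n\lio(n)\big|\le\vep N$ for all large $N$, and letting $\vep\to0$ completes the proof.

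I expect the only delicate point to be the ordering of the quantifiers: $\vep$ is chosen first, then the finite set $\mathcal P$ (which pins down $L$, $|\mathcal P|$, $\delta_1$, and only afterwards the admissible $\eta$), and only then does $N\to\infty$; it is essential that the correlation hypothesis is invoked for the \emph{finitely many} pairs of primes in $\mathcal P$, so that a single $N_0$ serves all of them. Everything else is bookkeeping: the Turán--Kubilius estimate is a one-line computation, and the accumulated errors are all of size $O(N\delta_1)$ or $O(N/\sqrt L)$, absorbed by taking $\mathcal P$ large. The genuinely non-trivial move is the swap of summation followed by Cauchy--Schwarz in $m$, which converts the one-parameter sum $\sum_n a_n\lio(n)$ into the two-parameter correlation sums $C_{p,q}$ while disposing of the multiplicative weight at no cost. (For $\lio=\mob$ one could instead first peel off the square-full part to reduce to the Liouville function, but this detour is unnecessary; the argument above needs nothing beyond $|\lio|\le1$ and multiplicativity.)
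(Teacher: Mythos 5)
The paper states this criterion as an imported result (K\'atai; Bourgain--Sarnak--Ziegler) and gives no proof of its own, so there is nothing internal to compare against; your argument is a correct reconstruction of the standard proof from the cited references, with the quantifier ordering ($\vep$ first, then $w_0$ and the finite set $\mathcal P$, and only then $N\to\infty$) handled properly and the bilinear hypothesis invoked exactly where and how it is needed. The only cosmetic point is that you implicitly normalise $\sup_n|a_n|\le 1$ in the diagonal bound $\sum_{p\in\mathcal P}\sum_{m\le N/p}|a_{pm}|^2\le NL$ and in the Cauchy--Schwarz step for $\sum_{n\le N}|L-\omega(n)|$; in general these estimates carry a harmless factor of $\sup_n|a_n|$ or its square, which does not affect the conclusion.
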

This criterion is applied to sequences of the form $a_n=f(T^nx)$, $n\geq1$, and, if satisfied, it yields a certain form of disjointness of different prime powers of the homeomorphism~$T$; notice that ,,sufficiently large'' in \cref{kbsz} may depend on $f$ and~$x$.

Our second tool is based on the method of lifting generic points in the Cartesian products of different prime powers $T^r$ and $T^s$ to almost {1-1}-extensions. It has already appeared in \cite{Ku-Le1}, where the almost {1-1}-extensions are chosen in such a way that the original cocycle considered on the extended space becomes continuous. Moreover, we will study ergodic joinings of $(T_\psi)^r$ and $(T_\psi)^s$ and show that in our case this set consists only of the relatively independent extensions of isomorphisms between $T^r$ and $T^s$ for different sufficiently large primes $r,s$. This will allow us to control generic points in the Cartesian product of the continuous  compact group extension $(S_\varphi)^r \times (S_\varphi)^s$.

In \cref{se5}, we give the main application of our method -- we prove Sarnak's conjecture for the dynamical systems given by:
\begin{itemize}
\item
all bijective substitutions (for a relationship of our result with a recent result of Drmota \cite{MR3287361}, see \cref{rem57}),
\item
certain subclass of regular (generalized) Morse sequences,
\item
certain sequences of the Rudin-Shapiro type.
\end{itemize}

In \cref{se6}, we compare our results concerning generalized Morse sequences and the Rudin-Shapiro type sequences with some earlier results in which~\eqref{sarnakcon} has been proved only for $f(y)=(-1)^{y[0]}$. Using spectral approach, we prove, in some cases, that the validity of~\eqref{sarnakcon} for such $f$ yields Sarnak's conjecture in its full form for the corresponding dynamical system. In particular, we show that Sarnak's conjecture holds for the dynamical systems given by Kakutani sequences~\cite{MR3043150,MR2981162}.\footnote{This result does not seem to follow by~\cref{se4}.} Sometimes, however, it seems that more than one function satisfying~\eqref{sarnakcon} is necessary for the validity of Sarnak's conjecture. E.g.,\ this seems to be the case for the dynamical systems given by the Rudin-Shapiro type sequences. Here, \eqref{sarnakcon} for $f$ has been been proved in~\cite{MaRi2013,Ta1} by a purely combinatorial approach. The methods developed in~\cite{MaRi2013} seem to be flexible enough to give~\eqref{sarnakcon} for finitely many functions described in~\cref{aplikacje}, hence yields one more proof of Sarnak's conjecture for the corresponding dynamical system.

In \cref{ostatnia1}, we compare our results with a recent work of Veech~\cite{Ve}. He provides a proof of Sarnak's conjecture for a class of dynamical systems given by some sequences over an arbitrary compact group. These sequences turn out to be generalizations of aforementioned Kakutani sequences. In particular, \cite{Ve} gives an alternative proof of Sarnak's conjecture for Kakutani systems first proved in \cite{MR3043150,MR2981162}.

\section{Basic tools}

\subsection{Spectral theory}

For an ergodic automorphism $T$ of a standard probability Borel space $(X,\mathcal{B},\mu)$, we consider the associated Koopman operator on $L^2(X,\mathcal{B},\mu)$
 given by $U_T(f)=f\circ T$. Then there exist elements $f_n \in L^2(X,\mathcal{B},\mu)$, $n\geq 1$,  such that
\begin{equation}\label{spect}
L^2(X,\mathcal{B},\mu)=\bigoplus_{n\geq 1}\mathbb{Z}(f_n) \text{ and }\sigma_{f_1} \gg \sigma_{f_2} \gg \dots,
\end{equation}
where $\mathbb{Z}(f)=\ov{\text{span}}\{U_T^n (f) : n\in\Z\}$ is the \emph{cyclic space} generated by $f$ and $\sigma_f$ denotes the only finite positive Borel measure on $\T$ such that $\int_X f\circ T^n \cdot\ov{f} \ d\mu=\int_\T z^n\ d \sigma_f(z)$ for each $n\in\Z$ ($\sigma_f$ is called the \emph{spectral measure of $f$}). The class of all measures equivalent to $\sigma_{f_1}$ in the above decomposition is called \emph{the maximal spectral type} of $U_T$ and \eqref{spect} is called a \emph{spectral decomposition}. We say that the maximal spectral type is realized by $f\in L^2(X,\mathcal{B},\mu)$ if $\sigma_{f}$ is equivalent to $\sigma_{f_1}$.

If $L^2(X,\mathcal{B},\mu)=\Z(f_1)\oplus \dots \oplus \Z(f_k)$ for some $f_i\in L^2(X,\mathcal{B},\mu)$, we say that $U_T$ has multiplicity at most $k$. If no such $k\geq 1$ exists, the multiplicity of $U_T$ is infinite. If $k=1$, $U_T$ is said to have \emph{simple spectrum}.

Recall that $T$ has \emph{discrete spectrum} if the maximal spectral type of $U_T$ is purely discrete. Equivalently, $L^2\xbm$ is generated by the eigenfunctions of $U_T$. By the Halmos-von Neumann theorem, $T$ is, up to isomorphism, an ergodic rotation on a compact monothetic group. If, additionally, all eigenvalues of $U_T$ are roots of unity, $T$ is said to have {\em rational discrete spectrum.}

For more information on the spectral theory see, e.g.,~\cite{MR2140546}.

\subsection{Joinings}
Recall that if $T$ and $S$ are ergodic automorphisms on $\xbm$ and $\ycn$, respectively, then by a {\em joining} between $T$ and $S$ we mean any $T\times S$-invariant measure $\kappa$ on  $(X\times Y,\cb\ot\cc)$ whose projections on $X$ and $Y$ are $\mu$ and $\nu$, respectively. We denote by $J(T,S)$ the set of joinings between $T$ and $S$ and by $J^e(T,S)$ the subset of ergodic joinings. Clearly, $\mu\otimes\nu \in J(T,S)$. If $T$ and $S$ are isomorphic, an isomorphism given by $R\colon\xbm\to\ycn$, then the measure $\mu_R$ determined by $\mu_R(B\times C)=\mu(B\cap R^{-1}C)$, $B\in\cb,C\in\cc$, belongs to $J^e(T,S)$. It is concentrated on the graph of $R$ and is called a {\em graph} joining. When $T=S$, we speak about self-joinings of $T$ and each graph self-joining is given by some element from the centralizer $C(T)$ of $T$.\footnote{The centralizer $C(T)$ consists of automorphisms of $\xbm$ commuting with $T$.}

Suppose that $T$ and $S$ are isomorphic, where the isomorphism is given by $R\colon X\to Y$, and have extensions to $\ov{T}$ on $(\ov{X},\ov{\mathcal{B}},\ov{\mu})$  and  $\ov{S}$ on $(\ov{Y},\ov{\mathcal{C}},\ov{\nu})$, respectively. The \emph{relatively independent extension} of $\mu_R$ (to a joining of $\ov{T}$ and $\ov{S}$) is denoted by $\widetilde{\mu}_R$ and determined by
$$
\int_{\ov{X}\times \ov{Y}} F\otimes G\ d\widetilde{\mu}_R=\int_{X} \mathbb{E}(F|X) \cdot \mathbb{E}(G|Y)\circ R \ d\mu
$$
for $F\in L^2(\ov{X},\ov{\mu})$, $G\in L^2(\ov{Y},\ov{\nu})$.

\subsection{Compact group extensions}
Assume that $T$ is an ergodic automorphism of a standard Borel probability space $\xbm$. Let $G$ be a compact metric group with Haar measure $m_G$.
\begin{Def}
Any measurable map $\psi\colon X\to G$ is called a {\em cocycle}. The automorphism $T_\psi$ of $(X\times G,\cb\ot\cb(G),\mu\ot m_G)$ defined by
$$
T_\psi(x,g):=(Tx,\psi(x)g)$$
is called a $G$-{\em extension} of $T$ (it is an example of a {\em compact group extension of} $T$).  We say that $\psi$ is {\em ergodic} if $T_\psi$ is ergodic.
\end{Def}

Compact group extensions enjoy the following relative unique ergodicity property.

\begin{Lemma}[\cite{MR603625}]\label{rue} If $T_\psi$ is ergodic (i.e.\ if the product measure $\mu\ot m_G$ is ergodic) then $\mu\ot m_G$ is the only $T_\psi$-invariant measure projecting onto $\mu$.\end{Lemma}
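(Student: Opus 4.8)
The plan is to exploit the natural action of $G$ on the fibres of the extension by translations. For $h\in G$ define the map $R_h\colon X\times G\to X\times G$ by $R_h(x,g)=(x,gh)$. First I would observe that each $R_h$ commutes with $T_\psi$, since $T_\psi R_h(x,g)=T_\psi(x,gh)=(Tx,\psi(x)gh)=R_h(Tx,\psi(x)g)=R_h T_\psi(x,g)$; here it is important that $G$ acts on the \emph{right} while the cocycle multiplies on the \emph{left}, so the two actions do not interfere. Consequently, if $\nu$ is any $T_\psi$-invariant probability measure on $X\times G$ with $\pi_X{}_*\nu=\mu$ (where $\pi_X$ is the projection onto $X$), then for every $h\in G$ the pushforward $(R_h)_*\nu$ is again $T_\psi$-invariant and still projects onto $\mu$.

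Next I would \emph{average over the group}: set
$$
\widetilde\nu:=\int_G (R_h)_*\nu\,dm_G(h).
$$
Because $m_G$ is Haar measure, $\widetilde\nu$ is invariant under every $R_h$, hence its conditional measures on the fibres $\{x\}\times G$ are all equal to $m_G$ (a measure on $G$ invariant under all right translations is Haar), and since $\widetilde\nu$ projects onto $\mu$ we get $\widetilde\nu=\mu\otimes m_G$. On the other hand $\widetilde\nu$ is $T_\psi$-invariant, being an average of $T_\psi$-invariant measures. Now invoke the hypothesis that $\mu\otimes m_G$ is \emph{ergodic} for $T_\psi$: an ergodic invariant measure is an extreme point of the simplex of invariant probability measures, so it cannot be written as a nontrivial average of invariant measures unless (almost) all of them coincide with it. More precisely, since $\mu\otimes m_G=\int_G (R_h)_*\nu\,dm_G(h)$ and each $(R_h)_*\nu$ is $T_\psi$-invariant, ergodicity forces $(R_h)_*\nu=\mu\otimes m_G$ for $m_G$-a.e.\ $h$, and taking any such $h$ and applying $(R_{h^{-1}})_*$ gives $\nu=\mu\otimes m_G$. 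This proves the claim.

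The only genuinely delicate point is the last step, where one concludes from $\mu\otimes m_G$ being an average of invariant measures that the averaged measures equal it a.e.; the clean way to justify this is the ergodic decomposition / extreme point characterization: write $\mu\otimes m_G=\int \nu_h\,dm_G(h)$ with $\nu_h:=(R_h)_*\nu$ all invariant, disintegrate each $\nu_h$ further into ergodic components, and use that an ergodic measure appearing in such an integral representation must be the a.e.\ value of the integrand. Everything else — the commutation $T_\psi R_h=R_h T_\psi$, the invariance of the average, and the identification of a right-translation-invariant fibre measure with $m_G$ via uniqueness of Haar measure — is routine. (An alternative, essentially equivalent route avoiding the averaging: take an ergodic $T_\psi$-invariant $\nu$ projecting onto $\mu$, note $(R_h)_*\nu$ is also ergodic, observe that two ergodic measures are either equal or mutually singular, and use that $\int_G(R_h)_*\nu\,dm_G(h)$ is absolutely continuous with respect to $\mu\otimes m_G$ to force $(R_h)_*\nu=\mu\otimes m_G$ for a.e.\ $h$.)
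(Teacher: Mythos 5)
Your argument is correct: the commutation $T_\psi\circ R_h=R_h\circ T_\psi$, the Haar-averaging to identify $\int_G (R_h)_*\nu\,dm_G(h)$ with $\mu\otimes m_G$ via right-invariance of the fibre measures, and the extreme-point property of the ergodic measure $\mu\otimes m_G$ together give exactly the standard proof of relative unique ergodicity. The paper itself offers no proof of this lemma --- it is quoted from the reference \cite{MR603625} --- so there is nothing to compare against beyond noting that your route is the classical one; the only points you should spell out in a final write-up are the measurability of $h\mapsto\nu(R_h^{-1}E)$ and the usual ``for every $h$, a.e.\ $x$'' versus ``a.e.\ $x$, for every $h$'' exchange (via a countable dense subset of the compact metric group $G$) when concluding that the conditional measures are right-invariant, both of which are routine as you say.
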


Let $\tau_g$ be an automorphism of $(X\times G,\mu\ot m_G)$ given by
$$
\tau_g(x,g')=(x,g'\cdot g)\;\;\mbox{for each}\;\;g'\in G.$$
Then $T_\psi\circ \tau_g=\tau_g\circ T_\psi$, that is, $\tau_g$ is an element of the centralizer $C(T_\psi)$ of $T_\psi$.

\begin{Prop}[\cite{MR1141362}] \label{centrm} Assume that $T$ is ergodic and $\psi\colon X\to G$ is ergodic as well. Assume additionally that $T$ has discrete spectrum. Then each $\widetilde{S}\in C(T_\psi)$  is a lift of some $S\in C(T)$. More precisely, $\widetilde{S}=S_{f,v}$, where $S_{f,v}(x,g)=(Sx,f(x)v(g))$ for some $S\in C(T)$, some measurable $f\colon X\to G$ and some continuous group automorphism $v\colon G\to G$. Moreover, if $\widetilde{S}$ and $\ov{S}$ are two lifts of $S\in C(T)$ then $\widetilde{S}=\ov{S}\circ \tau_{g_0}$ for some $g_0\in G$.\end{Prop}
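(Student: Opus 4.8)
The plan is to argue in three steps: first use joinings and the discrete-spectrum hypothesis on $T$ to show that $\widetilde S$ respects the fibration $X\times G\to X$, then use the ergodicity of $\psi$ to rigidify the resulting fibre maps into affine maps, and finally use the ergodicity of $\psi$ once more for the uniqueness statement. \emph{Step 1 (that $\widetilde S$ covers some $S\in C(T)$).} Since $\widetilde S$ is a measure automorphism of the ergodic system $(T_\psi,\mu\ot m_G)$, the graph joining $\mu_{\widetilde S}$ — which, as a dynamical system, is a copy of $(T_\psi,\mu\ot m_G)$ — is an \emph{ergodic} self-joining of $T_\psi$. Pushing it forward onto the product of the first coordinates yields an ergodic self-joining $\lambda$ of $T$. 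By the Halmos--von Neumann theorem $T$ is an ergodic rotation on a compact (monothetic, hence abelian) group, and for such a rotation the map $(x,x')\mapsto x^{-1}x'$ is invariant, so every ergodic self-joining is carried by the graph of a rotation; thus $\lambda=\mu_S$ for some $S\in C(T)$. Reading this back, writing $\widetilde S(x,g)=(a(x,g),b(x,g))$ we get $a(x,g)=Sx$ a.e., i.e.\ $\widetilde S(x,g)=(Sx,f(x,g))$ for a measurable $f\colon X\times G\to G$. Writing out $\widetilde S\circ T_\psi=T_\psi\circ\widetilde S$ and using $ST=TS$ gives the cocycle identity $f(Tx,\psi(x)g)=\psi(Sx)\,f(x,g)$; and since $\widetilde S$ is an invertible, $m_G$-fibre-preserving map covering $S$, the map $g\mapsto f(x,g)$ is an $m_G$-preserving bijection of $G$ for a.e.\ $x$.

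\emph{Step 2 (the fibre maps are affine).} Set $v_x(g):=f(x,e)^{-1}f(x,g)$, so $v_x(e)=e$; substituting into the cocycle identity and cancelling gives $v_{Tx}(\psi(x)g)=v_{Tx}(\psi(x))\,v_x(g)$ for a.e.\ $x$ and all $g$. Now on $X\times G\times G$ consider the extension $R(x,g,h):=(Tx,\psi(x)g,\psi(x)h)$ and the function $D(x,g,h):=v_x(g)^{-1}v_x(h)$; the previous identity shows $D\circ R=D$. Although $R$ is not ergodic, the change of variables $(x,g,h)\mapsto(x,g,g^{-1}h)$ conjugates it to $T_\psi\times\mathrm{id}_G$, so its ergodic components are copies of the ergodic system $(T_\psi,\mu\ot m_G)$, indexed by $k\in G$ and sitting on $\{(x,g,gk)\}$. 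Hence $D$ is a.e.\ constant on a.e.\ component: there is a measurable $c\colon G\to G$ with $v_x(gk)=v_x(g)\,c(k)$ for a.e.\ $k$ and a.e.\ $(x,g)$. This relation forces $c$ to be a.e.\ a homomorphism, hence (by Weil's theorem on measurable homomorphisms) a.e.\ equal to a continuous homomorphism $v\colon G\to G$; it also forces $g\mapsto v_x(g)v(g)^{-1}$ to be a.e.\ constant, say $b_x$, depending measurably on $x$, so that $v_x(g)=b_x v(g)$ and $f(x,g)=\big(f(x,e)b_x\big)v(g)$ a.e. Since a.e.\ $v_x$ is a bijection of $G$, $v$ is a bijective continuous endomorphism, i.e.\ $v\in\mathrm{Aut}(G)$; putting $f_0(x):=f(x,e)b_x$ we obtain $\widetilde S=S_{f_0,v}$, which is the first assertion.

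\emph{Step 3 (uniqueness up to $\tau_{g_0}$).} If $\widetilde S$ and $\ov S$ both lift $S$, then $R_0:=\ov S^{-1}\circ\widetilde S\in C(T_\psi)$ covers $\mathrm{id}\in C(T)$, so by Steps 1--2 it has the form $R_0(x,g)=(x,m(x)w(g))$ with $m\colon X\to G$ measurable and $w\in\mathrm{Aut}(G)$; writing out $R_0\circ T_\psi=T_\psi\circ R_0$ gives $m(Tx)\,w(\psi(x))=\psi(x)\,m(x)$ a.e. Consider $\Phi\colon X\times G\to G$, $\Phi(x,g):=g^{-1}m(x)w(g)$. A direct computation using that relation shows $\Phi\circ T_\psi=\Phi$; since $T_\psi$ is ergodic, $\Phi\equiv g_0$ for some constant $g_0\in G$, i.e.\ $m(x)w(g)=g\,g_0$ a.e. Thus $R_0(x,g)=(x,g\,g_0)=\tau_{g_0}(x,g)$, i.e.\ $\widetilde S=\ov S\circ\tau_{g_0}$.

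The main obstacle is Step 2: converting the bare cocycle identity for $f$ into the rigid affine form $f_0(x)v(g)$. The engine here is the ergodic decomposition of the \emph{non-}ergodic extension $R$ on $X\times G\times G$ (this is exactly where the ergodicity of $\psi$, rather than merely of $T$, enters the first assertion), together with the somewhat fiddly passage from an ``almost-everywhere homomorphism'' to a genuine continuous \emph{auto}morphism, which requires keeping track of nested a.e.\ quantifiers and invoking the invertibility of $\widetilde S$. By comparison, Step 1 is routine given the discrete-spectrum hypothesis and the classification of ergodic self-joinings of compact group rotations, and Step 3 is short once the invariant function $\Phi$ is identified.
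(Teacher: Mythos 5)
The paper itself gives no proof of \cref{centrm}: it is quoted verbatim from \cite{MR1141362}, so there is no in-text argument to compare yours against. Your proof is correct and follows the standard route of that circle of results (Newton, Mentzen, Lema\'nczyk--Mentzen): discrete spectrum of $T$ classifies the ergodic self-joinings of $T$ as graph joinings, which forces $\widetilde S$ to cover some $S\in C(T)$; the ergodicity of $T_\psi$, fed into the ergodic decomposition of your doubled extension $R$ (conjugate to $T_\psi\times\mathrm{id}_G$), rigidifies the fibre maps into affine maps $g\mapsto f_0(x)v(g)$ via Weil's theorem on measurable homomorphisms; and the uniqueness clause drops out of the $T_\psi$-invariance of $\Phi(x,g)=g^{-1}m(x)w(g)$. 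All three steps check out, including the cocycle identity $f(Tx,\psi(x)g)=\psi(Sx)f(x,g)$ and the computation showing $D\circ R=D$.

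Two small points are glossed over, both repairable by routine arguments. First, extracting $b_x$ measurably from the a.e.-constancy of $g\mapsto v_x(g)v(g)^{-1}$ deserves a word for non-abelian $G$ (e.g.\ $\{x: b_x\in U\}=\{x: m_G\{g: v_x(g)v(g)^{-1}\in U\}=1\}$ for $U$ open, which is measurable by Fubini). Second, deducing that $v$ is injective from the essential bijectivity of $v_x=b_x v$ is not immediate: surjectivity follows since $v(G)$ is a closed subgroup of full Haar measure, but a surjective continuous endomorphism of a compact group can have nontrivial kernel (e.g.\ $z\mapsto z^2$ on the circle), so one must note that $b_x v$ factors through $G/\ker v$ and hence pulls back only the $\ker v$-invariant $\sigma$-algebra, which coincides with $\mathcal{B}(G)$ mod $m_G$ only when $\ker v$ is trivial. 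Neither point affects the validity of the argument.
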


\begin{Def}
We will say that $T_\psi$ has $G$-{\em trivial centralizer} if
$$
C(T_\psi)=\{T_\psi^k\circ\tau_g :  k\in\Z, \; g\in G\}.
$$
\end{Def}

\begin{Def}
Let $H\subset G$ be a closed subgroup. The corresponding factor-automorphism $T_{\psi H}$ of $(X\times G/H,\mu\ot m_{G/H})$ given by
$$
T_{\psi H}(x,gH):=(Tx,\psi(x)gH)
$$
is called a \emph{natural factor} of $T_\psi$. It is called \emph{nontrivial} if $H\neq G$, and it is called \emph{normal} whenever $H$ is normal.
\end{Def}

\begin{Remark}
Notice that a power of a group extension is clearly a group extension: $(T_\psi)^r=T^r_{\psi^{(r)}}$\footnote{$\psi^{(r)}(x):=
\psi(x)\psi(Tx)\ldots\psi(T^{r-1}x)$ for $r\geq0$ and extends to $r\in\Z$ so that the cocycle identity
 $\psi^{(m+n)}(x)=\psi^{(m)}(x)\psi^{(n)}(T^mx)$ holds for every $m,n\in\Z$.} and the passage to natural factors is ``commutative'':
 $((T_{\psi})^r)_H=(T_{\psi H})^r$.
\end{Remark}

We need some facts about joinings of compact group extensions.

\begin{Th}[\cite{MR1141362}]\label{mkm} Assume that $T$ is ergodic. Assume that $S\in C(T)$ and let $\psi_i:X\to G$ be an ergodic cocycle, $i=1,2$. Assume that $\kappa\in J^e(T_{\psi_1},T_{\psi_2})$ and projects on the graph self-joining $\mu_S$ of $T$. Then there are two closed normal subgroups $H_1,H_2\subset G$ and an isomorphism $\ov{S}$ (a lift of $S$) between the two normal natural factors
$T_{\psi_1 H_1}$ and $T_{\psi_2 H_2}$ such that
\[
\kappa=\widetilde{(m_{G/H_1})_{\ov{S}}},
\]
i.e.\ $\kappa$ is the relatively independent extension of the graph joining $(m_{G/H_1})_{\ov{S}}\in J^e(T_{\psi_1 H_1},T_{\psi_2 H_2})$ given by the isomorphism $\ov{S}$.
\end{Th}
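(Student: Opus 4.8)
The plan is to realise $\kappa$ as an ergodic invariant measure of a \emph{single} compact group extension, apply to it the structure theory of such measures, and then read off $H_1,H_2$ and $\ov S$ from Goursat's lemma. Since $\kappa$ projects onto the graph joining $\mu_S$, it is concentrated on $\{(x,g,Sx,h):x\in X,\ g,h\in G\}$; identifying this set with $X\times G\times G$ via $(x,g,Sx,h)\mapsto(x,g,h)$ and using $TS=ST$, the system $(X\times G\times X\times G,\kappa,T_{\psi_1}\times T_{\psi_2})$ becomes $(X\times(G\times G),\widehat\kappa,T_\Psi)$, where $\Psi\colon X\to G\times G$ is the cocycle $\Psi(x):=(\psi_1(x),\psi_2(Sx))$ over the ergodic $T$ on $\xbm$. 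Here $\widehat\kappa$ is $T_\Psi$-invariant and ergodic, it projects onto $\mu$, and both of its marginals onto a copy of $X\times G$ equal $\mu\ot m_G$ (this is the joining hypothesis).

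Next I use the following structural fact. If $T$ on $\xbm$ is ergodic and $\Phi\colon X\to K$ is a cocycle into a compact metric group $K$, then every $T_\Phi$-ergodic measure on $X\times K$ projecting onto $\mu$ is the push-forward of $\mu\ot m_{\mathcal K}$ under a map of the form $(x,k)\mapsto(x,f(x)k)$, for some measurable $f\colon X\to K$ and some closed subgroup $\mathcal K\le K$ with $f(Tx)^{-1}\Phi(x)f(x)\in\mathcal K$ a.e.\ (this is the reduction of $\Phi$ to its Mackey group; it follows from the ergodic decomposition of $\mu\ot m_K$, which is carried by the $K$-orbit $\{\tau_{k}{}_*\kappa:k\in K\}$ of any such $\kappa$ because $\mu\ot m_K=\int_K\tau_{k}{}_*\kappa\ dm_K(k)$). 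Applying this with $K=G\times G$ and $\Phi=\Psi$ gives a measurable $f=(f_1,f_2)\colon X\to G\times G$ and a closed subgroup $\mathcal G\le G\times G$ with $\Psi_0:=f(T\,\cdot)^{-1}\Psi(\cdot)f(\cdot)\in\mathcal G$ a.e.

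Now $\mathcal G$ surjects onto both coordinates: the marginal of $\widehat\kappa$ on the first $X\times G$ equals both $\mu\ot m_G$ and the image of $\mu\ot m_{p_1(\mathcal G)}$ under $(x,g)\mapsto(x,f_1(x)g)$, whose conditionals over $X$ are Haar measures on single cosets of $p_1(\mathcal G)$; hence $p_1(\mathcal G)=G$, and symmetrically $p_2(\mathcal G)=G$. By the compact-group version of Goursat's lemma there are closed normal subgroups $H_1,H_2\subset G$ and a (topological) isomorphism $\theta\colon G/H_1\to G/H_2$ with $\mathcal G=\{(g,h):\theta(gH_1)=hH_2\}$; in particular $H_1\times H_2\subset\mathcal G$. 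Unwinding the identifications, the push-forward of $\widehat\kappa$ to $X\times G/H_1\times G/H_2$ is the graph joining of the map $\ov S(x,vH_1):=(Sx,\,f_2(x)\,\theta(f_1(x)^{-1}vH_1))$, which covers $S$; since $\theta$ is a homomorphism one sees $\ov S=S_{\widetilde f,\theta}$ in the sense of \cref{centrm}, so it is a genuine lift of $S$. Using $\Psi_0(x)\in\mathcal G$ (i.e.\ $\theta$ sends the first coordinate of $\Psi_0(x)$ mod $H_1$ to its second coordinate mod $H_2$) together with the homomorphism property of $\theta$, a direct computation gives $\ov S\circ T_{\psi_1H_1}=T_{\psi_2H_2}\circ\ov S$, so $\ov S$ is an isomorphism and $(m_{G/H_1})_{\ov S}\in J^e(T_{\psi_1H_1},T_{\psi_2H_2})$ (it is ergodic, being a factor of the ergodic $\kappa$).

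It remains to identify $\kappa$ as the relatively independent extension of this graph joining. Because $\mathcal G$ is the Goursat fibre product and contains $H_1\times H_2$, the conditional measures of $m_{\mathcal G}$ given the value of the first coordinate modulo $H_1$ are products of Haar measures on an $H_1$-coset and an $H_2$-coset; transporting this through $f$ shows that the conditional measures of $\kappa$ over the $X\times G/H_1\times G/H_2$-factor are the corresponding products of Haar measures — exactly the defining property of $\widetilde{(m_{G/H_1})_{\ov S}}$. Hence $\kappa=\widetilde{(m_{G/H_1})_{\ov S}}$. The main obstacle is the second step, namely the structure theory of ergodic measures of (possibly non-ergodic) compact group extensions — the Mackey reduction of $\Psi$; once the single-coset form is available, the rest is Goursat's lemma plus the bookkeeping needed to check that $\ov S$ intertwines the full natural factors and not merely the base rotation, which is where normality of $H_1,H_2$ and the homomorphism property of $\theta$ enter.
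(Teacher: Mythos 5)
The paper does not prove this statement: Theorem~\ref{mkm} is quoted from \cite{MR1141362} as a known result, so there is no in-paper argument to compare against. Your reconstruction is, as far as I can check, correct and follows the standard route of the cited source: rewrite $\kappa$ as an ergodic invariant measure of the single $G\times G$-extension by $\Psi(x)=(\psi_1(x),\psi_2(Sx))$, reduce to the Mackey group $\mathcal G$, apply Goursat's lemma to get $H_1,H_2$ and $\theta$, and check that the twisted map $\ov S=S_{\widetilde f,\theta}$ intertwines the natural factors and that the $H_1\times H_2$-coset fibres of $m_{\mathcal G}$ give exactly the conditionals of the relatively independent extension. The only step you take as a black box --- that every ergodic $T_\Phi$-invariant measure projecting onto $\mu$ is a translated Haar measure over the Mackey subgroup --- is indeed standard, and your justification via the ergodic decomposition of $\mu\otimes m_K=\int_K(\tau_k)_*\widehat\kappa\,dm_K(k)$ is the right one.
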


\begin{Remark}\label{zastmkm}
Suppose that $T$ has rational discrete spectrum and $(T_\psi)^r$ and $(T_\psi)^s$ are ergodic. Then $T^r$ and $T^s$ are isomorphic and the only ergodic joinings between them are the graph joinings. By \cref{mkm}, if there is no isomorphism between nontrivial normal natural factors of $(T_\psi)^r$ and $(T_\psi)^{s}$, then there are no ergodic joinings between  $(T_\psi)^r$ and $(T_\psi)^s$, except for the ``most independent'' ones: the relatively independent extensions of isomorphisms between $T^r$ and $T^s$. Notice also that if such a relative product is ergodic then automatically, by \cref{rue}, it is the only invariant measure on $X\times G\times X\times G$ projecting on the graph of the isomorphism.
\end{Remark}

\subsection{Generic points}
Let $T$ be a homeomorphism of a compact metric space $X$. Let $\mu$ be a $T$-invariant Borel probability measure on $X$.
\begin{Def}
We say that $x\in X$ is \emph{generic} for $\mu$ if $\frac1N\sum_{n\leq N}\delta_{T^nx}\to \mu$ weakly. If the convergence to $\mu$ takes place only along a subsequence $(N_k)$ then $x$ is called \emph{quasi-generic} for $\mu$.
\end{Def}
\begin{Remark}
Notice that, by the compactness of $X$, the space of probability measures on $X$ is also compact, hence each point is quasi-generic for some $T$-invariant measure.
\end{Remark}

\section{Basic objects}

\subsection{Odometers, Morse cocycles and Toeplitz extensions}\label{se:morsecoc}

\paragraph{Odometers}
Assume that $(n_t)_{t\geq0}$ satisfies $n_0=1$ and $n_t\divides n_{t+1}$ with $\la_t:=n_{t+1}/n_t\geq2$ for $t\geq0$. Consider
$X:=\prod_{t\geq 0}\Z/\la_t\Z$ with the product topology and the group law given by addition mod $\la_t$, with carrying the remainder to the right. This makes $X$ a compact metric Abelian group.
We define the translation $T$  by $(1,0,0,\ldots)$:
$$T(x_0,x_1,x_2,\ldots)=(x_0+1,x_1,x_2,\ldots)$$
to obtain $(X,\cb,m_{X},T)$ -- an ergodic rotation.
\begin{Def}
$T$ defined above is called an \emph{odometer}.
\end{Def}

\begin{Remark}
Odometer $T$ defined above has rational discrete spectrum given by the $n_t$-roots of unity, $t\geq0$. For each $t\geq0$, there is a Rokhlin tower ${\cal D}^t:=\{D^t_0,D_1^t,\ldots, D^t_{n_t-1}\}$, i.e.\ a partition of $X$ for which  $T^{i}D^t_0=D^t_{i\;{\rm mod}\;n_t}$ for each $i\geq 0$ (by ergodicity, such a tower is unique up to cyclic permutation of the levels). Indeed, $D_0^0=X$ and we set
$$
D^t_0:=\{x\in X :  x_0=\ldots=x_{t-1}=0\},\ t\geq 1.
$$
Clearly, the partition ${\cal D}^{t+1}$ is finer that ${\cal D}^t$ and the sequence of such partitions tends to the partition into points.
\end{Remark}
\begin{Remark}[cf.\ \cref{zastmkm}]
Notice that for each $r\geq1$,
\beq\label{isopowers}
\mbox{$T^r$ is isomorphic to $T$ whenever $T^r$ is ergodic.}\eeq
Indeed, $T^r$ has the same spectrum as $T$. To see the isomorphism more directly, notice that ${\rm gcd}(r,n_t)=1$ and $T^r$ permutes the levels of ${\cal D}^t$ -- this extends to an isomorphism map between $T$ and $T^r$.
\end{Remark}
\begin{Remark}[cf.\ \cref{zastmkm}]
Since $T$ has discrete spectrum, its only ergodic joinings are graph measures ${(m_X)}_W$, where $W\in C(T)$ is another rotation on $X$~\cite{MR1958753}.
It easily follows that
\begin{equation}\label{genodo}
  \parbox{0.8\linewidth}{each point $(x,y)\in X\times X$ is generic for an ergodic self-joining of the form ${(m_X)}_W$.}
\end{equation}
Indeed, define $W$ as the translation by $x-y$.
\end{Remark}

\paragraph{Morse cocycles}
Assume that $G$ is a compact metric group and $(T,X)$ is an odometer.

\begin{Def}[\cite{MR1685402,MR937955}]
We say that $\psi\colon X\to G$ is a \emph{Morse cocycle} if $\psi$ is constant on each $D^t_i$, $t\geq0$, $i=0,1,\ldots, n_t-2$ ($\psi|_{D^t_i}$ may depend on $i$).
\end{Def}
\begin{Remark}
To see  what are the values of a Morse cocycle $\psi$ on $D^{t}_{n_t-1}$, we first pass to the levels $D^{t+1}_{jn_t-1}$ for $j=1,\ldots,\la_{t+1}-1$, and read the values $\psi|_{D^{t+1}_{jn_t-1}}$. To read the values on
$D^{t+1}_{n_{t+1}-1}$ ($n_{t+1}=\la_{t+1}n_t$), we pass to ${\cal D}^{t+2}$ etc. It is clear that  $\psi$ defined in this way is continuous everywhere (as the levels of the towers are clopen sets) except perhaps  one point (given by the intersection of the top levels of all towers). Notice also that whenever $G$ is finite then a Morse cocycle cannot be continuous unless it is constant on {\bf each} level
of the tower~${\cal D}^{t_0}$ for some $t_0$. In this case, $T_\psi$, if ergodic, is a direct product of $T$ with a rotation on $G$. In particular, Sarnak's conjecture holds for $T_\psi$.
\end{Remark}

\begin{Remark}
The class of group extensions given by Morse cocycles is (up to measure-theoretic isomorphism) the same as the class of dynamical systems generated by generalized Morse sequences, see \cite{MR1685402,MR0239047,MR937955,MR970577},  which we consider in the next section.
\end{Remark}
\paragraph{Toeplitz extensions}
Morse cocycles yield extensions of odometers which are special cases of so called {\em Toeplitz extensions} studied in \cite{MR937955} (cf.\ earlier \cite{MR0240056,MR556675} and the last section). Toeplitz extensions are also given by cocycles over odometers but in the definition of such cocycles we are letting more than one level have non-constant values, as in the example below.\footnote{It is however required that the numbers of levels of ${\cal D}^t$ on which the cocycle is non-constant divided by $n_t$ goes to zero. A reason for that is that we want to obtain a regular Toeplitz sequence which is behind such a construction, see \cite{MR937955} for more details.}

\begin{Example}\label{exampl}
Let $\la_t:=2$ for each $t\geq0$ and $G:=\Z/2\Z$. We define $\psi\colon X\to\Z/2\Z$ so that at stage $t$ it is defined on each $D^t_i$, except for $i=2^{t-1}-1$ and $i=2^{t}-1$. Then,  when we pass to ${\cal D}^{t+1}$, on the levels $D^{t+1}_{2^{t-1}-1}$ and $D^{t+1}_{2^{t}+2^{t-1}-1}$ ($\psi$ must be defined here at this stage of the construction), we set the values 0 and 1 (or 1 and 0), respectively.
\end{Example}
The class of Toeplitz extensions of the dyadic odometer described in \cref{exampl} was considered in \cite{MR937955,MR749448}. The dynamical systems corresponding to the Rudin-Shapiro type  sequences (see Section III.2 in \cite{MR937955}) are  in this class.

\subsection{Generalized Morse sequences}\label{mrsna}
Let $G$ be a compact metric group with the unit $e$.
\begin{Def}
Let $b^t\in G^{\la_t}$ be a block over $G$ of length $|b^t|=\la_t\geq 2$ and $b^t[0]=e$, $t\geq0$.  The associated \emph{(generalized) Morse sequence} is defined by
\beq\label{mo1}
x:=b^{0}\times b^{1}\times\ldots,\eeq
where $B\times C=(B\circ c[0])(B\circ c[1])\ldots (B\circ c{[|C|-1]})$ and $B\circ g:=(b_0g,\ldots,b_{|B|-1}g)$ for $B,C$ blocks over $G$ and $g\in G$. By $(S,X(x))$ we denote the  subshift corresponding to $x$ ($X(x)\subset G^{\Z}$).
\end{Def}

\begin{Example}
Generalized Morse sequences for $G=\Z/2\Z$ were first studied in~\cite{MR0239047}. If $b^t\in\{00,01\}$, $t\geq0$, we speak about \emph{Kakutani sequences}~\cite{MR638749}.
\end{Example}

\begin{Def}[\cite{MR0255766}]
We say that $u\in G^\N$ is a \emph{Toeplitz sequence} whenever for each $n\in \N$ there exists $k_n\geq 1$ such that $u$ is constant on the arithmetic progression $n+k_n\N$.\footnote{For the theory of dynamical systems given by Toeplitz sequences, see e.g.\ \cite{MR2180227}.}
\end{Def}

\begin{Lemma}[cf.\ Figure~\ref{fig1}]\label{lmo1} Let $x=b^0\times b^1\times\ldots$ be a Morse sequence. The map
\beq\label{mo2}
y\mapsto \widehat{y},\;\;\widehat{y}[n]:=y[n+1]y[n]^{-1}
\eeq yields an equivariant map between $(S,X(x))$ and $(S,X(\widehat{x}))$. Moreover, $\widehat{x}$ is a Toeplitz sequence.
\end{Lemma}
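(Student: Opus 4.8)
The plan is to verify the two assertions separately: first that $y\mapsto\wh y$ intertwines the shifts, and second that $\wh x$ is a Toeplitz sequence, which will require unwinding the product structure of a Morse sequence.

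For the equivariance, note that the formula $\wh y[n]=y[n+1]y[n]^{-1}$ defines a continuous map $G^{\Z}\to G^{\Z}$ (it is coordinate-wise a composition of the shift, inversion, and group multiplication, all continuous), and one computes directly that $\widehat{Sy}[n]=(Sy)[n+1](Sy)[n]^{-1}=y[n+2]y[n+1]^{-1}=\wh y[n+1]=(S\wh y)[n]$, so the map conjugates $S$ to $S$. Hence it maps the orbit closure $X(x)$ into the orbit closure of $\wh x$, i.e.\ $X(\wh x)$; surjectivity onto $X(\wh x)$ is automatic since the image is closed, shift-invariant, and contains the dense orbit of $\wh x$. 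Thus $y\mapsto\wh y$ is the desired equivariant (factor) map.

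For the Toeplitz property of $\wh x$, the key point is to exhibit, for each $n$, a period along which $\wh x$ is constant. Write $x=b^0\times b^1\times\cdots$ and set $n_t=\la_0\cdots\la_{t-1}=|b^0\times\cdots\times b^{t-1}|$ (with $n_0=1$). Using the definition of the product $B\times C$ and $B\circ g$, one sees that the prefix $w^t:=b^0\times\cdots\times b^{t-1}$ of length $n_t$ is repeated, up to a global right-multiplication by a group element, on each block of coordinates $[jn_t,(j+1)n_t)$; more precisely, on the $j$-th such block, $x$ agrees with $w^t$ multiplied on the right by $c_j\in G$, where $c_j$ depends only on $j$ (it is the appropriate letter of $b^t\times b^{t+1}\times\cdots$). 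The crucial consequence is that the \emph{difference} sequence $\wh x[m]=x[m+1]x[m]^{-1}$ is insensitive to this right-multiplication: within each block $[jn_t,(j+1)n_t-1)$, $\wh x$ agrees with $\wh{w^t}$, independently of $j$. Therefore, for a fixed $n$, choosing $t$ large enough that $n<n_t$ and writing $n$ in the block $[0,n_t)$, the value $\wh x[n]$ recurs at every $n+n_t j$ with $j\ge0$ provided $n$ together with $n+1$ stay inside one block, i.e.\ provided $n\not\equiv n_t-1\pmod{n_t}$. The remaining residue $n\equiv n_t-1$ is handled by passing to $n_{t+1}$ (or higher): the coordinates $n_t-1$ and $n_t$ lie in the block $[0,n_{t+1})$ if $n_t<n_{t+1}$, hence for these $n$ one uses period $n_{t+1}$ instead. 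In all cases we have produced $k_n\in\{n_t,n_{t+1},\ldots\}$ with $\wh x$ constant on $n+k_n\N$, which is exactly the Toeplitz condition.

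The main obstacle is purely bookkeeping: one has to state cleanly, and prove by induction on $t$, the structural fact that $x$ restricted to $[jn_t,(j+1)n_t)$ equals $w^t$ right-translated by a $j$-dependent group element, and then observe that right translation drops out of the difference operator $y\mapsto\wh y$ inside each block. Once this ``self-similarity modulo a right factor'' is in place, both the equivariance and the Toeplitz property follow without further difficulty; the only mild care needed is the boundary coordinate $n_t-1$, resolved by moving to a finer level $n_{t+1}$ as above.
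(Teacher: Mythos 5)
Your proposal is correct and follows essentially the same route as the paper: the paper also writes $x=c_t\times z_t$ with $c_t=b^0\times\cdots\times b^{t-1}$, observes that $x$ is a concatenation of blocks $c_t\circ g$ and that the difference operator $y\mapsto\widehat y$ cancels the right factor $g$, so that $\widehat x=\widehat c_t\ast\widehat c_t\ast\cdots$ with only the positions $\equiv n_t-1\pmod{n_t}$ left unfilled at stage $t$. Your additional bookkeeping (the explicit equivariance computation and the treatment of the boundary coordinate by passing to level $t+1$) is exactly the detail the paper leaves implicit, and it is handled correctly.
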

\begin{proof}
The first part is obvious. For the second, notice that for each $t\geq1$, we have
$$
x=c_t\times z_t, \text{ where }c_t=b^{0}\times\ldots\times b^{t-1},\; z_t=b^t\times b^{t+1}\times\ldots,
$$
whence $x$ is a concatenation of blocks of the form $c_t\circ g$. Moreover,
$$
c_t[n+1]c_t[n]^{-1}=(c_t\circ g)[n+1](c_t\circ g)[n]^{-1}\text{ for }n=0,\ldots,|c_t|-2.
$$
It follows that
\begin{equation}\label{xdach}
\widehat{x}=\widehat{c}_t\ast\widehat{c}_t\ast\widehat{c}_t\ast\ldots,
\end{equation}
where ``$\ast$'' stands for the unfilled place of $\widehat{x}$ at the stage $t\geq1$.
\end{proof}
\begin{Remark}
Toeplitz sequence $\widehat{x}$ from~\eqref{xdach} is regular in the sense of~\cite{MR0255766}. Hence $(S,X(\widehat{x}))$ is uniquely ergodic~\cite{MR2180227}.
\end{Remark}

\begin{Lemma}[cf.\ Figure~\ref{fig1}]\label{lmo2}
$(S,X(x))$ is topologically isomorphic to $(S_{\va},X(\widehat{x})\times G)$, where $\va\colon X(\widehat{x})\to G$ is the continuous cocycle given by
$$
\va(z)=z[0], \text{ for each }z\in X(\widehat{x}).$$
\end{Lemma}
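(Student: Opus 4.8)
The plan is to exhibit an explicit topological conjugacy between $(S,X(x))$ and $(S_\va,X(\widehat x)\times G)$ built on the map $y\mapsto(\widehat y,y[0])$. First I would define $\Phi\colon X(x)\to X(\widehat x)\times G$ by $\Phi(y)=(\widehat y,y[0])$, where $\widehat y$ is as in \cref{lmo1}. The point $\widehat y$ lies in $X(\widehat x)$ because the map $y\mapsto\widehat y$ of \cref{lmo1} is equivariant and continuous, hence sends $X(x)$ into $X(\widehat x)$; and $y[0]\in G$ trivially. Continuity of $\Phi$ is immediate since both coordinates depend continuously (in fact, through finitely many coordinates) on $y$.

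Next I would check equivariance. For $y\in X(x)$ we have $S_\va\Phi(y)=S_\va(\widehat y,y[0])=(S\widehat y,\va(\widehat y)\,y[0])=(S\widehat y,\widehat y[0]\,y[0])$, while $\Phi(Sy)=(\widehat{Sy},(Sy)[0])=(S\widehat y,y[1])$; and by the very definition \eqref{mo2}, $\widehat y[0]\,y[0]=y[1]y[0]^{-1}y[0]=y[1]$, so $S_\va\Phi=\Phi S$. Injectivity of $\Phi$ is clear: from $\widehat y$ and $y[0]$ one reconstructs $y$ recursively by $y[n+1]=\widehat y[n]\,y[n]$ for $n\ge 0$ and $y[n-1]=\widehat y[n-1]^{-1}y[n]$ for $n\le 0$. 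This reconstruction formula also shows that $\Phi^{-1}$, defined on $\Phi(X(x))$, is continuous (each coordinate $y[n]$ is a fixed word in finitely many coordinates of $(\widehat y,y[0])$), so $\Phi$ is a homeomorphism onto its image.

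The remaining — and only genuinely substantive — step is surjectivity: one must show $\Phi(X(x))=X(\widehat x)\times G$, equivalently that for every $z\in X(\widehat x)$ and every $g\in G$ the sequence $y$ defined by $y[0]=g$ and the recursion $y[n+1]=z[n]y[n]$, $y[n-1]=z[n-1]^{-1}y[n]$ actually lies in $X(x)$. Here I would use unique ergodicity: $X(\widehat x)$ is uniquely ergodic by the Remark following \cref{lmo1} (the Toeplitz sequence $\widehat x$ is regular), and $X(x)$ is uniquely ergodic as well (a standard fact for generalized Morse sequences — or one can deduce it afterwards). Since $\Phi(X(x))$ is a closed, $S_\va$-invariant, nonempty subset of $X(\widehat x)\times G$ whose projection to $X(\widehat x)$ is all of $X(\widehat x)$ (because $y\mapsto\widehat y$ maps $X(x)$ onto $X(\widehat x)$, as $\widehat x$ is in the image), and since $S_\va$ is a $G$-extension of the uniquely ergodic $(S,X(\widehat x))$, the set $\Phi(X(x))$ supports an $S_\va$-invariant measure projecting onto the unique measure of $X(\widehat x)$; by relative unique ergodicity (\cref{rue}, applied once we know $S_\va$ is ergodic for the product measure, which holds because $(S,X(x))$ is uniquely ergodic and isomorphic to its image) this measure is $m_{X(\widehat x)}\otimes m_G$, which has full support in $X(\widehat x)\times G$. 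Hence $\Phi(X(x))$ is dense, and being closed it equals $X(\widehat x)\times G$.

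The main obstacle is precisely this surjectivity argument: one has to be careful that the $\tau_g$-action of $G$ on $X(\widehat x)\times G$ corresponds under $\Phi$ to left-multiplication of Morse sequences by $g$ (i.e. to passing from $y$ to $y\circ g$), so that $\Phi(X(x))$ is $G$-invariant, and then combine $G$-invariance with the fact that $\Phi(X(x))$ meets each fiber $\{z\}\times G$ — for instance the fiber over $\widehat x$ contains $(\widehat x,e)=\Phi(x)$ — to get the full product. Alternatively, if one prefers to avoid invoking unique ergodicity of $X(x)$, one can argue directly: the orbit closure of $\widehat x$ under $S$ together with the $G$-translates generated by the block structure \eqref{xdach} fills out $X(\widehat x)\times G$ by the minimality-type properties of regular Toeplitz systems. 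Once surjectivity is established, $\Phi$ is a homeomorphism intertwining $S$ and $S_\va$, which is the claim.
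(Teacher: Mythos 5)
Your map $\Phi(y)=(\widehat y,\,y[0])$ is exactly the one the paper uses, and your verification of continuity, equivariance and injectivity coincides with the paper's (which disposes of injectivity with the same remark that $y[n+1]$ is determined by $y[n]$ and $\widehat y[n]$). For surjectivity the paper gives only the one-line argument ``it is onto since $\widehat y=\widehat{y\circ g}$,'' which is precisely your \emph{alternative} route: $\Phi$ intertwines $y\mapsto y\circ g$ on $X(x)$ with $\tau_g$ on $X(\widehat x)\times G$, the projection of the closed invariant set $\Phi(X(x))$ onto $X(\widehat x)$ is everything (minimality of the Toeplitz system), and right-$G$-invariance of $\Phi(X(x))$ then forces it to be the whole product. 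Both you and the paper tacitly use that $X(x)$ is stable under $y\mapsto y\circ g$ for every $g\in G$; this is where the non-degeneracy of the Morse sequence enters (in the intended application $G$ is generated by the columns of the substitution, so this holds).

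Your \emph{primary} surjectivity argument, however, is circular. To apply \cref{rue} you need $S_\va$ to be ergodic with respect to the product measure $m_{X(\widehat x)}\otimes m_G$ on the \emph{full} space $X(\widehat x)\times G$, and this does not follow from unique ergodicity of $(S,X(x))$: that only tells you the restriction of $S_\va$ to the (a priori possibly proper) closed invariant set $\Phi(X(x))$ is uniquely ergodic, and its unique measure projects onto $m_{X(\widehat x)}$ without any reason to equal the product measure unless you already know $\Phi(X(x))$ is everything --- which is exactly what is being proved. A degenerate Morse sequence with every $b^t$ constantly $e$ shows the implication fails in general: $(S,X(x))$ is then a uniquely ergodic singleton while $m_{X(\widehat x)}\otimes m_G$ on $\{\mathrm{pt}\}\times G$ is not ergodic for $|G|>1$. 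So discard the relative-unique-ergodicity argument and keep the $\tau_g$-invariance one; with that, your proof is correct and agrees with the paper's.
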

\begin{proof}
The topological isomorphism is given by the (equivariant) map
$$
y\mapsto (\widehat{y},y[0]).$$
Indeed, this map is continuous as $\va$ is continuous, it is onto since
$\widehat{y}=\widehat{y\circ g}$, and finally it is 1-1 since $y[n+1]$ is determined by $y[n]$ and $\widehat{y}[n]$.
\end{proof}

\begin{Remark}[cf.\ Figure~\ref{fig1}]\label{uw16}
Let $x$ be the Morse sequence given by~\eqref{mo1} with $n_t:=|c_t|=|b^0 \times \dots \times b^{t-1}|$, $t\geq 1$. Then the Toeplitz system $(S,X(\widehat{x}))$ has the $\{n_t\}$-odometer $(T,X)$ as its topological factor. Moreover, the Morse dynamical system $(S,X(x))$ is given by a Morse cocycle $\psi$ over $T$. The values $(\psi|_{D_0^t},\dots,\psi|_{D_{n_t-2}^t})$ are determined by $\widehat{c}_t$:
$$
\psi|_{D^t_i}=\widehat{c}_t[i]\text{ for } 0\leq i\leq n_t-2,  t\geq 1.
$$
It follows that
$$
b^0[0]=e,\ b^0[i]=\prod_{j=0}^{i-1}\widehat{c}_1[j],1\leq i\leq \lambda_0-1
$$
and then, inductively ($c_t=b^0\times \ldots\times b^{t-1}$),
$$
b^{t+1}[0]=e,\ b^{t+1}[i]=\prod_{j=i}^1 (\widehat{c}_{t+1}[jn_t-1] c_t[n_t-1]), 1\leq i\leq \lambda_{t+1}-1
$$
(for more details see, e.g.\ \cite{MR937955,MR1758324}). However, the Morse cocycle is not continuous. The passage to the Toeplitz dynamical system from \cref{lmo2} allows us to get its continued version.
\end{Remark}

\begin{Lemma}\label{lmo3}
The normal natural factors of Morse dynamical systems over $G$ are  Morse dynamical systems over $G/H$.
\end{Lemma}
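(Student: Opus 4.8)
The plan is to exploit that the construction \eqref{mo1} of a generalized Morse sequence is functorial in the group: any continuous group homomorphism out of $G$ pushes a Morse sequence over $G$ forward to a Morse sequence over the target. Here the relevant homomorphism is the quotient map $\pi\colon G\to G/H$, and this is where normality of $H$ enters in an essential way --- it is precisely what makes $G/H$ a compact metric group, hence a legitimate ``alphabet'' for a Morse sequence. Since $\pi$ is a continuous homomorphism it intertwines the two operations behind \eqref{mo1}, that is $\pi(B\circ g)=\pi(B)\circ\pi(g)$ and $\pi(B\times C)=\pi(B)\times\pi(C)$ when $\pi$ is applied coordinate by coordinate. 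Hence, putting $\ov{b}^t:=\pi(b^t)\in(G/H)^{\la_t}$ (so that $\ov{b}^t[0]=eH$), the sequence $\ov{x}:=\ov{b}^{0}\times\ov{b}^{1}\times\ldots$ is a generalized Morse sequence over $G/H$ and equals the coordinatewise image $\pi(x)$ of $x$; in particular the coordinatewise application of $\pi$ is a continuous equivariant surjection $(S,X(x))\to(S,X(\ov{x}))$.

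The heart of the argument is to identify $(S,X(\ov{x}))$ with the normal natural factor $T_{\psi H}$. Keep the notation of \cref{uw16}: let $(T,X)$ be the $\{n_t\}$-odometer with $n_t=|c_t|$ and let $\psi$ be the Morse cocycle of $x$. Since $|\ov{c}_t|=|c_t|=n_t$, the sequence $\ov{x}$ lives over the \emph{same} odometer $(T,X)$, and since $\pi$ is a homomorphism one has $\ov{c}_t=\pi(c_t)$ and $\widehat{\ov{c}}_t=\pi(\widehat{c}_t)$. Feeding this into the recursive description of the Morse cocycle in \cref{uw16}, the Morse cocycle $\ov{\psi}$ of $\ov{x}$ satisfies $\ov{\psi}|_{D^t_i}=\widehat{\ov{c}}_t[i]=\pi\bigl(\widehat{c}_t[i]\bigr)=\pi\bigl(\psi|_{D^t_i}\bigr)$ for $0\le i\le n_t-2$ and every $t\ge1$; since these levels exhaust $X$ up to the single point $\bigcap_t D^t_{n_t-1}$, we conclude $\ov{\psi}=\pi\circ\psi$ $\mu$-almost everywhere. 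On the other hand, as $H$ is normal, unwinding the definitions shows that the natural factor $T_{\psi H}$ on $(X\times G/H,\mu\ot m_{G/H})$ is literally the $(G/H)$-extension $T_{\pi\circ\psi}$, because $(Tx,\psi(x)gH)=\bigl(Tx,(\pi\circ\psi)(x)\cdot gH\bigr)$. Therefore $T_{\psi H}=T_{\pi\circ\psi}=T_{\ov{\psi}}$, which by \cref{uw16} applied to $\ov{x}$ is (measure-theoretically) isomorphic to the Morse dynamical system $(S,X(\ov{x}))$ over $G/H$; this is the assertion. (One can also check that under the identifications of Figure~\ref{fig1} the coordinatewise-$\pi$ map is precisely the natural factor map, since the $G$-coordinate of $T_\psi$ corresponds to $y\mapsto y[0]$ and hence its $H$-coset to $y\mapsto\ov{y}[0]$.)

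I do not expect a conceptual obstacle --- the whole point is that \eqref{mo1} is natural with respect to quotient homomorphisms --- so the only work is bookkeeping, and the one step to carry out with care is the verification that the Morse cocycle of $\ov{x}$ agrees $\mu$-a.e.\ with $\pi\circ\psi$. This means tracking the recursions for $b^{t}$ and $\widehat{c}_t$ in \cref{uw16} under $\pi$ and observing that the single point at which a Morse cocycle may fail to be continuous is $\mu$-null, so it does not affect the measure-theoretic isomorphism. The other routine check is the identity $\pi(x)=\ov{b}^0\times\ov{b}^1\times\cdots$, which is an immediate unwinding of the definitions of the operations $\times$ and $\circ$.
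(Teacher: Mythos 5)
Your proposal is correct and follows essentially the same route as the paper: the paper's one-line proof rests on exactly the identity $\pi(B\times C)=\pi(B)\times\pi(C)$ (stated there as $B\times C\bmod H=(B\bmod H)\times(C\bmod H)$) together with the cocycle description of \cref{uw16}, which is precisely what you verify in detail. Your elaboration — that normality makes $\pi$ a homomorphism, that $\widehat{\ov{c}}_t=\pi(\widehat{c}_t)$, and that $T_{\psi H}=T_{\pi\circ\psi}$ — is just the bookkeeping the paper leaves implicit.
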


\begin{proof}
The assertion follows immediately from the equality $B\times C\bmod H=(B\bmod H)\times (C\bmod H)$ and \cref{uw16}.
\end{proof}

\subsection{Bijective substitutions}\label{bsgcnf}
Fix a finite alphabet $A$ with $|A|=r\geq 2$.

\begin{Def}
A map $\theta\colon A\to A^\lambda$ ($\lambda\geq 1$) is called a {\em substitution on} $A$ {\em of constant length} $\la$ (in what follows, simply a {\em substitution}) if there exists $n\geq 1$ such that for each $a,a'\in A$ there exists $k$ satisfying $\theta^n(a)[k]=a'$. We extend $\theta$ first to a map on blocks over $A$, then to a map $\theta\colon A^{\N}\to A^{\N}$. We will assume that $\theta(a_0)[0]=a_0$. By iterating $\theta$ at $a_0$, we obtain a fixed point for the map $\theta\colon A^{\N}\to A^{\N}$ and we denote by
$(S,X(\theta))$ the corresponding subshift of $A^\Z$.
\end{Def}
\begin{Remark}[see Chapter 5 in~\cite{MR2590264}]
For each substitution $\theta$, $(S,X(\theta))$ is strictly ergodic.
\end{Remark}

\begin{Remark}\label{f13}
Let $\theta\colon A\to A^\lambda$ be a substitution such that $(S,X(\theta))$ is aperiodic. Recall that then for each $y\in X(\theta)$ there is a unique sequence $(i_t(y))_{t\geq1}\subset \Z$ (\emph{$t$-skeleton}) with $i_t(y)\in[-\la^t+1,0]$ such that $y[i_t+k\la^t,i_t+(k+1)\la^t-1]=\theta^t(a_{k,t})$ for each $k\in\Z$ and some letters $a_{k,t}\in A$. This allows us to define the corresponding towers of height $\la^t$ by setting the base of the $t$-tower
 $$
 D^t_0:=\{y\in X(\theta):i_t(y)=0\}
 $$
 to obtain $\bigcup_{i=0}^{\la^t-1}S^iD^t_0=X(\theta)$.
\end{Remark}
\begin{Def}[\cite{Mosse:1992}]
We say that substitution $\theta$ is \emph{recognizable} if there exists a constant $M>0$ such that if $y\in X(\theta)$, $t\geq1$ and $i\in[-\la^t+1,0]$ satisfy
$$ y[i,i+M\la^t-1]=\theta^t(b_1)\ldots\theta^t(b_M)$$ for some $b_1,\ldots,b_M\in A$ then $i=i_t(y)$. We say that $M$ is a \emph{constant of recognizability}.
\end{Def}
\begin{Remark}[\cite{Mosse:1992,MR2590264}]
Each substitution $\theta$ such that $(S,X(\theta))$ is aperiodic, is recognizable. In what follows, we will tacitly assume that we deal with recognizable substitutions.
\end{Remark}

\begin{Remark}\label{f13a}
Suppose that $\theta$ is recognizable. It follows that each function $\raz_{D^t_0}$ depends on not more than $M\la^t$ coordinates.
\end{Remark}

\begin{Def}[\cite{MR2590264}]
Substitution $\theta$ is called {\em bijective} if
$$
\ov{d}(\theta(a),\theta(a')):=\frac{|\{0\leq k\leq \lambda-1 : \theta(a)[k]\neq\theta(a')[k]\}|}{\lambda}=1,\text{ whenever }a\neq a'
$$
or, equivalently, the maps $\sigma_i(a):=\theta(a)[i]$ are bijections of $A$, $i=0,\ldots,\la-1$.
\end{Def}
\begin{Remark}
We can assume (wlog) that $\sigma_0=Id$ by considering, if necessary, its power.
\end{Remark}

\begin{Def}
Let $G$ be a finite group with the unit $e$. A substitution $\theta\colon G\to G^\lambda$ is called a \emph{group substitution}  whenever
$$
\theta(g)=\theta(e)\circ g \text{ for each }g\in G.
$$
\end{Def}

\begin{Remark}\label{uw:22}
Each group substitution is bijective. Moreover, each group substitution can be identified with the Morse sequence $\theta(e) \times \theta(e) \times \dots$
\end{Remark}

\begin{Lemma}[cf.\ \cref{lmo3} and \cref{uw:22}]\label{lmo3a}
The normal natural factors of dynamical systems given by group substitutions are determined by group substitutions.
\end{Lemma}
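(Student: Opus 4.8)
The plan is to transcribe the proof of \cref{lmo3}, using the identification of group substitutions with Morse sequences from \cref{uw:22}. Let $\theta\colon G\to G^{\lambda}$ be a group substitution, so that $\theta(g)=\theta(e)\circ g$ for all $g\in G$, and fix a normal (hence closed, $G$ being finite) subgroup $H\subset G$. By \cref{uw:22}, the system $(S,X(\theta))$ is the Morse system $(S,X(x))$ associated with $x=b^{0}\times b^{1}\times\cdots$ where $b^{t}=\theta(e)$ for every $t\ge 0$; the key feature is that here \emph{all} the defining blocks coincide.

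Next I would apply \cref{lmo3}: the normal natural factor of $(S,X(x))$ attached to $H$ is the Morse system over $G/H$ given by $x\bmod H$. Invoking the identity $B\times C\bmod H=(B\bmod H)\times(C\bmod H)$ already used in the proof of \cref{lmo3}, this Morse sequence is
$$
x\bmod H=(\theta(e)\bmod H)\times(\theta(e)\bmod H)\times\cdots,
$$
again a Morse sequence over the finite group $G/H$, all of whose blocks equal the single block $b:=\theta(e)\bmod H$, with $b[0]=eH$.

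Finally I would recognize $b\times b\times\cdots$ as the fixed point of the group substitution $\ov{\theta}\colon G/H\to(G/H)^{\lambda}$ defined by $\ov{\theta}(gH):=b\circ gH=(\theta(e)\bmod H)\circ gH$, which evidently satisfies $\ov{\theta}(gH)=\ov{\theta}(eH)\circ gH$. A one-line induction gives $\ov{\theta}^{\,n}(gH)=(\theta^{n}(e)\bmod H)\circ gH$, so primitivity of $\theta$ (some block $\theta^{n}(e)$ exhausts $G$) forces primitivity of $\ov{\theta}$ (then $\theta^{n}(e)\bmod H$ exhausts $G/H$), and the $\theta$-fixed point is carried onto the $\ov{\theta}$-fixed point $x\bmod H$; hence the natural factor in question is $(S,X(\ov{\theta}))$, a system given by a group substitution, as claimed. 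I do not anticipate a genuine obstacle here: all the content is in \cref{lmo3} and the combinatorial identity above, and the only point requiring a line of verification is that the defining properties of a group substitution survive passage to the quotient $G/H$ (primitivity, and the relation $\ov{\theta}(gH)=\ov{\theta}(eH)\circ gH$; recognizability is then automatic once the factor subshift is aperiodic).
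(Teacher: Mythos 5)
Your argument is correct and is essentially the route the paper takes: the paper's own proof works directly at the substitution level, observing that $gH\mapsto \theta(e)\circ g\bmod H$ is again a (bijective, group) substitution because each column of the substitution matrix, reduced mod $H$, still runs over all of $G/H$. Your detour through \cref{lmo3} and the Morse-sequence identification of \cref{uw:22} is just a repackaging of the same reduction mod $H$, and your explicit verification of primitivity of the quotient substitution is a detail the paper leaves implicit.
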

\begin{proof}
Consider the group substitution given by $e\mapsto B$ (i.e.\ $g\mapsto B\circ g$). Then
$$
gH\mapsto B\circ g\;{\rm mod}\;H
$$
yields a bijective substitution as in each column of the matrix corresponding to the group substitution we see all elements of $G$; in particular,  by taking them mod $H$, we see all elements of $G/H$.
\end{proof}

Denote by $\mathscr{S}_r$ the group of permutations of $A$.
Define $\widetilde{\theta}\colon \mathscr{S}_r\to \mathscr{S}_r^{\la}$ by setting
\beq\label{cover}
\widetilde{\theta}(\tau)=(\sigma_0\circ\tau,\sigma_1\circ\tau,
\ldots,\sigma_{\la-1}\circ\tau)=\widetilde{\theta}(Id)\circ\tau\eeq
for each $\tau\in\mathscr{S}_r$. Let  $G\subset \mathscr{S}_r$ be the subgroup generated by $\sigma_0,\sigma_1,\ldots,\sigma_{\la-1}$ and define
\beq\label{cover1}
\ov{\theta}(\tau):=\widetilde{\theta}(\tau)\;\mbox{for}\;\tau\in G.
\eeq
\begin{Def}[cf.\ \cref{lgc1} below]
We call $\ov{\theta}$ the {\em group cover substitution} of $\theta$.
\end{Def}
\begin{Lemma}
$\ov{\theta}$ is a (bijective) substitution.
\end{Lemma}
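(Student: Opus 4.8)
The plan is to verify that $\ov{\theta}$ satisfies the two requirements of the definition of a substitution: that it is a map $G \to G^\la$ of constant length $\la$ (which is essentially built into the definition \eqref{cover1}, since $\ov{\theta}(\tau) = \widetilde{\theta}(\tau)$ and $\widetilde{\theta}(\tau) \in \mathscr{S}_r^\la$), and, crucially, the primitivity-type condition: there exists $n \geq 1$ such that for all $\tau, \tau' \in G$ there is a position $k$ with $\ov{\theta}^n(\tau)[k] = \tau'$. I also must check that $\ov{\theta}$ is \emph{well-defined as a self-map of $G$}, i.e.\ that $\widetilde{\theta}$ maps $G$ into $G^\la$ — but this is immediate from \eqref{cover}, since $\widetilde{\theta}(\tau)[i] = \sigma_i \circ \tau$ and each $\sigma_i \in G$, $\tau \in G$, so $\sigma_i \circ \tau \in G$ by closure of the subgroup $G$. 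Bijectivity of $\ov\theta$ then follows exactly as for group substitutions: $\ov\theta(\tau)[i] = \sigma_i\circ\tau$, and $\tau\mapsto\sigma_i\circ\tau$ is a bijection of $G$ for each $i$.

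\textbf{Key step: iterating and tracking the alphabet letter at a given position.} First I would record the iteration formula. From the group-substitution-like identity $\widetilde{\theta}(\tau) = \widetilde\theta(Id)\circ\tau$ one gets, by induction on $n$, that $\ov{\theta}^n(\tau) = \ov{\theta}^n(Id)\circ\tau$, i.e.\ the $n$-th iterate applied to $\tau$ is obtained from the $n$-th iterate applied to $Id$ by right-multiplying every entry by $\tau$. (Concretely: $\ov{\theta}^n(\tau)[k] = \ov{\theta}^n(Id)[k]\circ\tau$ for every position $k$ in the block of length $\la^n$; this is the same bookkeeping that makes $\theta(e)\circ g$ work for group substitutions in \cref{uw:22}.) Consequently, for given $\tau, \tau' \in G$, finding $k$ with $\ov{\theta}^n(\tau)[k] = \tau'$ is equivalent to finding $k$ with $\ov{\theta}^n(Id)[k] = \tau'\circ\tau^{-1}$. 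So it suffices to prove the single statement: there exists $n$ such that $\{\ov{\theta}^n(Id)[k] : 0 \leq k \leq \la^n - 1\}$ is all of $G$.

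\textbf{Proving surjectivity of the $n$-th iterate of $Id$.} Here the idea is that the positions $k$ in $\ov\theta^n(Id)$ decompose in base $\la$: writing $k = k_0 + k_1\la + \dots + k_{n-1}\la^{n-1}$ with each $k_j\in\{0,\dots,\la-1\}$, one reads off from the definition of the substitution that $\ov{\theta}^n(Id)[k] = \sigma_{k_{n-1}}\circ\sigma_{k_{n-2}}\circ\cdots\circ\sigma_{k_0}$. (This is the standard "reading the $t$-skeleton" computation, cf.\ \cref{f13}; formally it is another induction on $n$ using $\ov\theta(\tau)[i]=\sigma_i\circ\tau$.) Therefore the set of letters appearing in $\ov{\theta}^n(Id)$ is exactly the set of all words of length $n$ in the generators $\sigma_0, \dots, \sigma_{\la-1}$. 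Since $G$ is the subgroup generated by $\sigma_0,\dots,\sigma_{\la-1}$ — and, recalling the normalization $\sigma_0 = Id$, so that words of length $n$ include all words of length $\leq n$ by padding with $\sigma_0$'s — and $G$ is finite, every element of $G$ is such a word of some bounded length; taking $n$ to be (at least) the diameter of $G$ with respect to this generating set, every element of $G$ appears in $\ov{\theta}^n(Id)$. This gives the required $n$, uniformly in $\tau,\tau'$, and completes the verification.

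\textbf{Expected main obstacle.} There is no deep obstacle here — this lemma is a routine unwinding of definitions — but the one point requiring care is the well-definedness and the role of $\sigma_0 = Id$: one must be sure that all the $\sigma_i$ genuinely lie in $G$ (true by the very definition of $G$ as the group they generate) and that the primitivity constant $n$ can be chosen independently of $\tau, \tau'$, which is exactly what the reduction to $\ov{\theta}^n(Id)[k] = \tau'\tau^{-1}$ secures. A secondary bit of care is the base-$\la$ position decomposition and the resulting identification of the letter at position $k$ with an ordered product of generators — getting the order of composition right (innermost $\sigma_{k_0}$, outermost $\sigma_{k_{n-1}}$) is the only place an off-by-one or reversed-word error could creep in, but it does not affect which \emph{set} of letters appears, so surjectivity onto $G$ is insensitive to it.
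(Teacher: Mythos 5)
Your proof is correct and takes essentially the same route as the paper's: both reduce to showing that every element of $G$ appears as a letter of $\ov{\theta}^n(Id)$ for some $n$, using that $G$ is finite and generated by $\sigma_0,\dots,\sigma_{\la-1}$ (with $\sigma_0=Id$ ensuring the set of letters seen only grows). The paper makes explicit the one point you leave implicit --- that positive words in the generators suffice because $\sigma_i^{-1}=\sigma_i^{r!-1}$ in a finite group --- but this is the same finiteness fact you invoke, so there is no gap.
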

\begin{proof}
Notice that if $\ov{\theta}^n(\sigma_0)[j]=\tau$ then, in $\ov{\theta}^{n+1}(\sigma_0)$, we can find the block $(\sigma_0\circ\tau,\ldots,\sigma_{\la-1}\circ\tau)$. Since all elements in $G$ are of finite order ($\sigma_i^{-1}=\sigma_i^{r!-1}$), it follows by induction that,  for some $n$, we will see all symbols from $G$ on $\ov{\theta}^n(\sigma_0)$.
\end{proof}

\begin{Lemma}\label{lgc1}
$(S,X(\theta))$ is a topological factor of its group cover substitution $(S,X(\ov{\theta}))$.\end{Lemma}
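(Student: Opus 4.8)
The plan is to exhibit an explicit topological factor map $\Pi\colon X(\ov\theta)\to X(\theta)$ obtained by applying a single map $\pi\colon G\to A$ coordinate by coordinate. Fix the distinguished letter $a_0\in A$ from the definition of $\theta$ (recall $\sigma_0=Id$, so $\theta(a_0)[0]=a_0$), set $\pi(\tau):=\tau(a_0)$ for $\tau\in G\subset\mathscr{S}_r$, and define $\Pi\bigl((y[n])_{n\in\Z}\bigr):=(\pi(y[n]))_{n\in\Z}$, i.e.\ $\Pi(y)[n]=y[n](a_0)$. Being a $1$-block code, $\Pi$ is continuous and commutes with $S$, so it remains only to check that $\Pi(X(\ov\theta))\subseteq X(\theta)$ and that $\Pi$ is onto $X(\theta)$.

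Both points rest on the identity
$$
\pi\bigl(\ov\theta^{\,n}(\sigma_0)[j]\bigr)=\theta^n(a_0)[j]\qquad(n\geq1,\ 0\leq j\leq\la^n-1),
$$
equivalently $\pi(\ov\theta^{\,n}(\sigma_0))=\theta^n(a_0)$ when $\pi$ is applied letterwise to the block $\ov\theta^{\,n}(\sigma_0)$. I would prove it by induction on $n$. For $n=1$, $\ov\theta(\sigma_0)[j]=\sigma_j\circ\sigma_0=\sigma_j$ and $\pi(\sigma_j)=\sigma_j(a_0)=\theta(a_0)[j]$. For the step, write $\tau_{n,j}:=\ov\theta^{\,n}(\sigma_0)[j]$; cutting $\ov\theta^{\,n+1}(\sigma_0)=\ov\theta(\ov\theta^{\,n}(\sigma_0))$ into length-$\la$ blocks, the block at position $j\la$ is $\ov\theta(\tau_{n,j})=(\sigma_0\circ\tau_{n,j},\dots,\sigma_{\la-1}\circ\tau_{n,j})$, whose letterwise $\pi$-image is $\bigl(\sigma_i(\tau_{n,j}(a_0))\bigr)_{0\leq i\leq\la-1}=\theta\bigl(\tau_{n,j}(a_0)\bigr)=\theta\bigl(\theta^n(a_0)[j]\bigr)$ by the inductive hypothesis; since $\theta^{n+1}(a_0)=\theta(\theta^n(a_0))$, this is exactly the length-$\la$ block of $\theta^{n+1}(a_0)$ at position $j\la$.

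Granting the identity, $\Pi(X(\ov\theta))\subseteq X(\theta)$ follows from the language description of primitive substitution subshifts. Since $\sigma_0=Id$ is prefix-fixed for $\ov\theta$ (as $\ov\theta(\sigma_0)[0]=\sigma_0\circ\sigma_0=\sigma_0$) and $\ov\theta$ is primitive, every finite word of the language of $\ov\theta$ occurs in $\ov\theta^{\,n}(\sigma_0)$ for $n$ large. Hence, given $y\in X(\ov\theta)$, any finite word of $\Pi(y)$ is the letterwise $\pi$-image of a word of $y$, which occurs in some $\ov\theta^{\,n}(\sigma_0)$, so that word of $\Pi(y)$ occurs in $\pi(\ov\theta^{\,n}(\sigma_0))=\theta^n(a_0)$ and therefore lies in the language of $\theta$; thus $\Pi(y)\in X(\theta)$. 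Finally $\Pi(X(\ov\theta))$ is a nonempty (because $X(\ov\theta)\neq\emptyset$), closed, $S$-invariant subset of $X(\theta)$, and $(S,X(\theta))$ is minimal, being strictly ergodic; hence $\Pi(X(\ov\theta))=X(\theta)$, which is the assertion.

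I do not expect a genuine obstacle here: the whole content is the inductive identity, and the only thing requiring care is keeping the composition conventions consistent — namely that $\pi(\tau)=\tau(a_0)$ and $\widetilde\theta(\tau)=\widetilde\theta(Id)\circ\tau$ are compatible, so that $\pi$ really intertwines $\ov\theta$ with $\theta$. The passage from the fixed points to the full subshifts is then the routine minimality/language argument.
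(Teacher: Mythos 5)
Your proposal is correct and uses the same factor map as the paper: the $1$-block code $y\mapsto (y[n](a_0))_{n\in\Z}$ given by evaluating each permutation at the distinguished letter. You merely supply more detail than the paper does (the inductive identity $\pi(\ov\theta^{\,n}(\sigma_0))=\theta^n(a_0)$ and the minimality argument for surjectivity, where the paper instead invokes transitivity of $G$ on $A$), so this is essentially the paper's proof written out in full.
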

\begin{proof}
 We define an equivalence relation on $G$  by setting $\tau\equiv\tau'$ if $\tau(0)=\tau'(0)$.\footnote{This relation is  called \emph{$\theta$-consistent}, see \cite{MR0293611,MR985556}.} For $y\in X(\ov{\theta})$, set
$$
F(y)[n]:=(y[n])(0).
$$
Notice that the image of $F$ equals $X(\theta)$, $F$ is equivariant and takes the same values on the equivalence classes of $\equiv$. Finally, notice that $\{\tau(0): \tau\in G\}=A$ since $\theta$ is a substitution, whence $G$ acts transitively on $A$.
\end{proof}

\begin{Remark}[cf.\ \cref{uw:22}]\label{prz1}
The group cover substitution $\ov{\theta}$ can be identified with the Morse sequence $B\times B\times\ldots$ (over $G$), where $B=(\sigma_0,\sigma_1,\ldots,\sigma_{\la-1})$.
\end{Remark}

\begin{Remark} \label{rmo}
Notice that, in order to prove Sarnak's conjecture for a bijective substitution $\theta$, it suffices to prove it for $(S_{\va},X(\widehat{x})\times G)$, where $x=B\times B\times\ldots$ as $(S,X(\theta))$  is its topological factor (see \cref{lgc1} and \cref{prz1}). Notice also that we do not claim that for $(S,X(\theta))$ the Toeplitz dynamical system $(S,X(\widehat{x}))$ is its topological factor (even though the odometer is its topological factor). In fact, there is a counterexample to such a claim due to Herning~\cite{MR3167382}.
\end{Remark}

\section{Sarnak's conjecture for finite group extensions}\label{se4}

\subsection{Lifting generic points for compact group extensions}
We now recall a basic result on lifting generic points from \cite{Ku-Le1}.
Assume that $\ov{T}_i$ ($T_i$) is a uniquely ergodic homeomorphism, with a unique invariant measure $\ov{\mu}_i$ ($\mu_i$),  of a compact metric space $\ov{X}_i$ ($X_i$), $i=1,2$. Assume, moreover, that $\pi_i\colon\ov{X}_i\to X_i$
is continuous and yields $(T_i,X_i)$ a topological factor of $(\ov{T}_i,\ov{X}_i)$.

\begin{Prop}[\cite{Ku-Le1}]\label{dl1}
Assume that $(\ov{T}_i,\ov{X}_i,\ov{\mu}_i)$ and $(T_i,X_i,\mu_i)$ are measure-theoretically isomorphic. Assume, moreover that $(T_i,X_i,\mu_i)$ is measure-theoretically coalescent\footnote{An automorphism $T$ of $\xbm$ is called {\em coalescent} \cite{MR0230877} if each endomorphism commuting with $T$ is invertible.}  for $i=1,2$. Assume that $(x_1,x_2)\in X_1\times X_2$ is generic for an ergodic $T_1\times T_2$-invariant measure $\rho$. Then there exists a unique  $\ov{T}_1\times \ov{T}_2$-invariant measure $\ov{\rho}$, such that  each pair $(\ov{x}_1,\ov{x}_2)\in (\pi_1\times\pi_2)^{-1}(x_1,x_2)$ is generic for $\ov{\rho}$. Moreover,  the system $(\ov{T}_1\times\ov{T}_2,\ov{\rho})$ is isomorphic to $(T_1\times T_2,\rho)$.
\end{Prop}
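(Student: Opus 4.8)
The plan is to transport the problem to the measure-theoretic category, apply a joining argument there, and then lift back. First I would use the assumed measure-theoretic isomorphisms $R_i\colon(\ov T_i,\ov X_i,\ov\mu_i)\to(T_i,X_i,\mu_i)$, $i=1,2$. The point $(x_1,x_2)$ is generic for an ergodic $\rho\in J^e(T_1,T_2)$; since each $\ov X_i$ is compact and $\ov T_i$ is a homeomorphism, every pair $(\ov x_1,\ov x_2)\in(\pi_1\times\pi_2)^{-1}(x_1,x_2)$ is quasi-generic along some subsequence for some $\ov T_1\times\ov T_2$-invariant measure $\ov\rho$ (by the \cref{dl1}-preceding \cref{Remark} on compactness of the space of measures). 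The projection of $\ov\rho$ onto $X_1\times X_2$ is $\rho$ (push forward the Cesàro averages through $\pi_1\times\pi_2$), and since $\ov T_i$ is uniquely ergodic with unique measure $\ov\mu_i$, the marginals of $\ov\rho$ are $\ov\mu_1$ and $\ov\mu_2$; thus $\ov\rho\in J(\ov T_1,\ov T_2)$ and it lies over $\rho$.

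The crux is to show that $\ov\rho$ is \emph{uniquely} determined over $\rho$, i.e.\ that $J(\ov T_1,\ov T_2)$ has exactly one element projecting onto $\rho$, and that this element is ergodic and isomorphic to $(T_1\times T_2,\rho)$. Here I would push $\ov\rho$ through $R_1\times R_2$ to obtain a joining $\rho':=(R_1\times R_2)_*\ov\rho\in J(T_1,T_2)$ over $\rho$; via the isomorphisms, $(\ov T_1\times\ov T_2,\ov\rho)$ is isomorphic to $(T_1\times T_2,\rho')$, and $\rho'$ still projects onto $\rho$ under $\mathrm{id}\times\mathrm{id}$ — more precisely, $\rho'$ joins $(T_1,X_1,\mu_1)$ with itself-as-a-factor-of-itself in a way lying above the ergodic joining $\rho$. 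The trivial lift $\rho$ itself (using the identity as the isomorphism of $(T_i,\mu_i)$ with its model) is one such joining. I would then invoke \textbf{coalescence}: any joining of $T_1$ and $T_2$ lying over the ergodic joining $\rho$ and having the same marginals corresponds to an endomorphism of $(T_1\times T_2,\rho)$ commuting with $T_1\times T_2$ and projecting to the identity on each coordinate factor; since $T_1$ and $T_2$ are coalescent, so is a suitable product/fibered structure, forcing this endomorphism to be the identity and hence $\rho'=\rho$ (as a self-joining this is the diagonal). Consequently $\ov\rho$ is the unique invariant measure over $\rho$, every pair in the fibre is quasi-generic for it along \emph{every} subsequence, hence genuinely generic, and $(\ov T_1\times\ov T_2,\ov\rho)\cong(T_1\times T_2,\rho)$ via $R_1\times R_2$.

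The step I expect to be the main obstacle is the coalescence argument: spelling out precisely why ``a joining $\rho'$ over $\rho$ with the correct marginals, pulled through the isomorphisms'' forces triviality. The subtlety is that coalescence of $T_1$ and of $T_2$ individually must be parlayed into a rigidity statement about the \emph{joint} system $(T_1\times T_2,\rho)$; one must argue that the map $\ov X_1\times\ov X_2\to X_1\times X_2$ built from $\pi_1\times\pi_2$, read through $R_1\times R_2$, becomes an \emph{endomorphism} of $(T_1\times T_2,\rho)$ over the identity on each marginal, and then that such an endomorphism fixing both coordinate $\sigma$-algebras is automatically the identity — this last point uses that $\rho$ has marginals $\mu_1,\mu_2$ and that each $(T_i,\mu_i)$ is coalescent, so the induced endomorphism of $(X_i,\mu_i)$ is invertible, hence an automorphism, hence (being over the identity factor map) the identity. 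Once this rigidity is in hand, uniqueness of $\ov\rho$ and the genericity/isomorphism conclusions follow routinely.
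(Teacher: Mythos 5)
First, note that the paper does not prove this proposition itself --- it is quoted from \cite{Ku-Le1}; the only hint given is the parenthetical remark just below the statement, that the factor map $\pi$ is a.e.\ 1-1 because of coalescence. Your overall strategy is the right one: pass to a quasi-generic limit $\kappa$ of $(\ov{x}_1,\ov{x}_2)$, observe that its marginals are $\ov{\mu}_1,\ov{\mu}_2$ by unique ergodicity and that $(\pi_1\times\pi_2)_\ast\kappa=\rho$ by genericity of $(x_1,x_2)$, and then use coalescence to show that there is only one such measure. Your first paragraph is correct.

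The gap is in how you use coalescence. You assert that the endomorphism $\pi_i\circ R_i^{-1}$ of $(T_i,X_i,\mu_i)$, being invertible and ``over the identity factor map'', must be the identity, and you deduce $\rho'=\rho$ (``the diagonal''). Both claims are false in general: coalescence only gives that $\pi_i\circ R_i^{-1}$ is an \emph{automorphism}, i.e.\ an element of the centralizer $C(T_i)$, which can be highly nontrivial (e.g.\ for an odometer it is the whole group of translations); correspondingly $\rho'=(R_1\times R_2)_\ast\kappa$ equals $(V_1\times V_2)_\ast\rho$ for some $V_i\in C(T_i)$ and need not equal $\rho$. Moreover, a joining of $T_1$ and $T_2$ that merely maps onto $(T_1\times T_2,\rho)$ is not an ``endomorphism of $(T_1\times T_2,\rho)$'' unless you already know the two measures coincide, so the middle of your second paragraph is circular. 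Fortunately the false step is not needed. The correct use of coalescence is: invertibility of $\pi_i\circ R_i^{-1}$ means precisely that $\pi_i$ is 1-1 off a $\ov{\mu}_i$-null set $N_i$; since any candidate $\kappa$ has marginals $\ov{\mu}_1,\ov{\mu}_2$, we get $\kappa\bigl((N_1\times\ov{X}_2)\cup(\ov{X}_1\times N_2)\bigr)=0$, so $\pi_1\times\pi_2$ is $\kappa$-a.e.\ injective and $\kappa=\bigl((\pi_1\times\pi_2)^{-1}\bigr)_\ast\rho$ is uniquely determined; it is isomorphic to $(T_1\times T_2,\rho)$ directly via $\pi_1\times\pi_2$, with no need to route the isomorphism through $R_1\times R_2$. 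With this replacement your argument closes: all subsequential limits coincide, so every point of the fibre is genuinely generic for $\ov{\rho}:=\kappa$.
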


Let $T$ be an odometer acting on $(X,\mathcal{B},m_X)$ and let $\ov{T}$ be a uniquely ergodic homeomorphism of $\ov{X}$ (with the unique invariant measure $\ov{m_X}$) such that $\pi\colon\ov{X}\to X$ is a topological factor map, and $(T,m_X)$ and $(\ov{T},\ov{m_X})$ are measure-theoretically isomorphic (then $\pi$ is a.e.\ 1-1 as transformations with discrete spectrum are coalescent).  Assume that $\psi\colon X\to G$ is ergodic and such that the cocycle $\ov{\psi}\colon \ov{X}\to G$ given by
\beq\label{uciagl}
\ov{\psi}(\ov{x}):=\psi(\pi(\ov{x}))\;\;\mbox{is continuous}\eeq
 (it is automatically ergodic as $(T_\psi, X\times G,m_X\ot m_G)$ and $(\ov{T}_{\ov{\psi}},\ov{X}\times G,\ov{m_X}\ot m_G)$ are isomorphic).
Assume that $r\neq s$ are such that $(T_\psi)^r$ and $(T_\psi)^s$ are ergodic, hence $T^r$ and $T^s$ are isomorphic (and they are isomorphic to $T$).

\begin{Prop}\label{narz}
Assume that the only ergodic joinings between $(T_\psi)^r$ and $(T_\psi)^s$ are the relatively independent extensions over the graphs of isomorphisms between $T^r$ and $T^s$. Let $\ov{x}\in \ov{X}$ and let $\rho={(m_X)}_R$ be the (ergodic) graph joining for which the point $(\pi(\ov{x}),\pi(\ov{x}))$ is generic. Then for each $g\in G$, the point $((\ov{x},g),(\ov{x},g))$ is generic for the $\widetilde{\ov{\rho}}$, where $\ov{\rho}$ comes from \cref{dl1} ($\sim$~stands for the relatively independent extension). Moreover,
\beq\label{a10}
((\ov{T}_{\ov{\psi}})^r\times (\ov{T}_{\ov{\psi}})^s,\widetilde{\ov{\rho}})\;\;\mbox{and}\;\;
((T_\psi)^r\times (T_\psi)^s,\widetilde{{(m_X)}_R})\;\mbox{are isomorphic}.\eeq
\end{Prop}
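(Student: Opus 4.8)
The plan is to combine the lifting machinery of \cref{dl1} with the joining-classification hypothesis. First I would set up the product picture: let $\ov{T}_1=\ov{T}_2=\ov{T}_{\ov{\psi}}$ acting on $\ov{X}\times G$ with its unique invariant measure $\ov{m_X}\ot m_G$ (unique ergodicity here follows from \cref{rue}, since $(\ov{T}_{\ov\psi},\ov X\times G,\ov{m_X}\ot m_G)$ is isomorphic to the ergodic $(T_\psi,X\times G)$), and let $T_1=(T_\psi)^r$, $T_2=(T_\psi)^s$ on $X\times G$. The factor maps are $\pi_i=\pi\times\mathrm{id}_G$, which are continuous because $\ov\psi=\psi\circ\pi$ is continuous by~\eqref{uciagl}, and they intertwine $(\ov{T}_{\ov\psi})^r$ with $(T_\psi)^r$ (resp.\ $s$). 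The measure-theoretic isomorphism of $(\ov{T}_{\ov\psi})^r$ with $(T_\psi)^r$ (and likewise for $s$) is inherited from the isomorphism of $\ov{T}_{\ov\psi}$ with $T_\psi$ by taking the $r$-th power of both sides, so the first hypothesis of \cref{dl1} holds.

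Next I would verify coalescence of $T_1=(T_\psi)^r$ and $T_2=(T_\psi)^s$. Since $T$ is an odometer, it has discrete spectrum, hence so does $T^r$; by \eqref{isopowers} of \cref{zastmkm} $T^r$ is isomorphic to $T$ and in particular is coalescent (transformations with discrete spectrum are coalescent, as noted in the excerpt). Then $(T_\psi)^r=T^r_{\psi^{(r)}}$ is a compact group extension of the discrete-spectrum automorphism $T^r$; by the joining hypothesis (equivalently, via \cref{centrm}, which describes $C(T_\psi)$ up to the structure allowing $G$-triviality arguments), the only ergodic self-joinings of $(T_\psi)^r$ are relatively independent extensions of graphs, and this forces every endomorphism commuting with $(T_\psi)^r$ to be invertible. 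More concretely, an endomorphism of $(T_\psi)^r$ projects to an endomorphism of $T^r$, which is invertible by coalescence of $T^r$; lifting and using that the only joinings are relative products of graphs shows the lift is invertible too. This gives measure-theoretic coalescence of $T_1,T_2$, the remaining hypothesis of \cref{dl1}.

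Now I would run \cref{dl1}. By \eqref{genodo} of the odometer remarks, $(\pi(\ov x),\pi(\ov x))\in X\times X$ is generic under $T^r\times T^s$ for an ergodic graph joining $\rho=(m_X)_R$, where $R$ is the isomorphism $T^r\to T^s$ (this uses \eqref{isopowers} to identify both with $T$). Take any $g\in G$; then $((\pi(\ov x),g),(\pi(\ov x),g))=(\pi_1\times\pi_2)(((\ov x,g),(\ov x,g)))$, and I claim $((\pi(\ov x),g),(\pi(\ov x),g))$ is generic for $(T_\psi)^r\times(T_\psi)^s$ under the relatively independent extension $\widetilde{(m_X)_R}$. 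This is where the joining hypothesis is used crucially: the point is quasi-generic for some $T_1\times T_2$-invariant measure $\kappa$ projecting onto the ergodic $\rho$ on both $X$-coordinates; averaging its ergodic decomposition, every ergodic component also projects onto $\rho$ (by ergodicity of $\rho$), hence by the hypothesis each component is a relatively independent extension of a graph of an isomorphism between $T^r$ and $T^s$, and by \cref{rue} applied to the compact group extension $(T_\psi)^r\times(T_\psi)^s \to T^r\times T^s$ over $\rho$ — whose fibre is $G\times G$ — there is only one such invariant measure, namely $\widetilde{(m_X)_R}$ (here one uses that $\widetilde{(m_X)_R}$ is ergodic, which is part of the standing assumption in \cref{zastmkm}/\cref{narz}); so $\kappa=\widetilde{(m_X)_R}$ along every subsequence, giving genuine genericity. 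Applying \cref{dl1} with this $\rho$ then yields a unique $\ov{T}_1\times\ov{T}_2$-invariant measure $\ov\rho$ for which every preimage $((\ov x,g),(\ov x,g))\in(\pi_1\times\pi_2)^{-1}((\pi(\ov x),g),(\pi(\ov x),g))$ is generic, and $(\ov{T}_1\times\ov{T}_2,\ov\rho)\cong(T_1\times T_2,\widetilde{(m_X)_R})$, which is exactly \eqref{a10}; and by construction/uniqueness $\ov\rho=\widetilde{\ov\rho}$ — its projection to $X\times X$ through the further factor $\ov X\to X$ is the graph $(m_X)_R$ and, being the unique $\ov T_1\times\ov T_2$-invariant lift with the right genericity, it must be the relatively independent extension.

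The main obstacle I anticipate is the coalescence verification for $(T_\psi)^r$ and $(T_\psi)^s$: one needs that the joining hypothesis genuinely upgrades coalescence of the odometer power to coalescence of the group extension, which requires care about endomorphisms (not just automorphisms) commuting with the extension and the structure of their lifts via \cref{centrm} — the latter is stated for the centralizer, so some supplementary argument ruling out non-invertible commuting endomorphisms is needed. A secondary subtlety is the passage from quasi-genericity to genericity in the product, i.e.\ checking that \emph{every} subsequential limit measure is forced to be $\widetilde{(m_X)_R}$; this hinges on the ergodic decomposition argument together with relative unique ergodicity (\cref{rue}) over the ergodic base joining $\rho$, and on the ergodicity of the relative product $\widetilde{(m_X)_R}$ itself.
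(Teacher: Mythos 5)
Your overall toolkit is the right one, but you apply \cref{dl1} at the wrong level, and this creates a genuine gap rather than a technicality. You take $T_1=(T_\psi)^r$, $T_2=(T_\psi)^s$ on $X\times G$ as the lower systems in \cref{dl1}. But \cref{dl1} requires the $T_i$ to be \emph{uniquely ergodic homeomorphisms} of compact metric spaces, and $T_\psi$ is not a homeomorphism of $X\times G$: in this section $\psi$ is merely a measurable ergodic cocycle (only $\ov{\psi}=\psi\circ\pi$ is assumed continuous via~\eqref{uciagl}). For the same reason your intermediate claim that $((\pi(\ov{x}),g),(\pi(\ov{x}),g))$ is quasi-generic for a $(T_\psi)^r\times(T_\psi)^s$-\emph{invariant} measure is unjustified: weak-$\ast$ limits of empirical measures of a non-continuous map need not be invariant. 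This is precisely the obstruction the whole almost 1-1-extension set-up is designed to circumvent, so it cannot be waved away. In addition, your coalescence verification rests on the assertion that the only ergodic self-joinings of $(T_\psi)^r$ are relatively independent extensions of graphs; this does not follow from the hypothesis (which concerns joinings between the $r$-th and $s$-th powers with $r\neq s$) and is in fact false, since the diagonal self-joining of the ergodic $(T_\psi)^r$ is an ergodic graph joining whose fibres over the diagonal of $X\times X$ are not $m_G\otimes m_G$ unless $G$ is trivial. You flagged this step as the main obstacle, and indeed it does not close.

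The paper avoids all of this by invoking \cref{dl1} only one floor down: with $\ov{T}^r,\ov{T}^s$ on $\ov{X}$ over $T^r,T^s$ on $X$, where coalescence is free (discrete spectrum) and everything is topological. This produces $\ov{\rho}$ on $\ov{X}\times\ov{X}$ for which $(\ov{x},\ov{x})$ is generic; any quasi-generic limit $\kappa$ of $((\ov{x},g),(\ov{x},g))$ under the genuinely continuous system $(\ov{T}_{\ov{\psi}})^r\times(\ov{T}_{\ov{\psi}})^s$ then projects onto $\ov{\rho}$, and \cref{rue} applied to the $G\times G$-extension of $(\ov{T}^r\times\ov{T}^s,\ov{\rho})$ by $\ov{\psi}^{(r)}\times\ov{\psi}^{(s)}$ forces $\kappa=\widetilde{\ov{\rho}}$, provided $\widetilde{\ov{\rho}}$ is ergodic. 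That ergodicity is imported through the isomorphism~\eqref{a10} from the ergodicity of $\widetilde{{(m_X)}_R}$, which is where the joining hypothesis is actually used: every ergodic component of $\widetilde{{(m_X)}_R}$ projects onto the ergodic $(m_X)_R$ and so, by hypothesis, must equal $\widetilde{{(m_X)}_R}$ itself. If you reorganize your argument along these lines --- \cref{dl1} on the base, \cref{rue} on the fibre --- your remaining observations fall into place and no coalescence statement about the group extensions is needed.
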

\begin{proof} The point $((\ov{x},g),(\ov{x},g))$ is quasi-generic for a $(\ov{T}_{\ov{\psi}})^r\times (\ov{T}_{\ov{\psi}})^s$-invariant measure~$\kappa$. By~\cref{dl1}, $(\ov{x},\ov{x})$ is generic for $\ov{\rho}$. Therefore, the projection of $\kappa$ on $\ov{X}\times\ov{X}$ is equal to $\ov{\rho}$. Using \cref{rue} (applied to $\ov{T}^r\times\ov{T}^s$, $\ov{\psi}^{(r)}\times\ov{\psi}^{(s)}$ and $\ov{\rho}$), to conclude, we only need to prove that
$((\ov{T}_{\ov{\psi}})^r\times (\ov{T}_{\ov{\psi}})^s,\widetilde{\ov{\rho}})$ is ergodic. Notice that~\eqref{a10} is obvious since $\rho={(m_X)}_R$ and $\ov{\rho}$ yield isomorphic systems. This gives immediately that $((\ov{T}_{\ov{\psi}})^r\times (\ov{T}_{\ov{\psi}})^s,\widetilde{\ov{\rho}})$ is ergodic, whence $\kappa=\widetilde{\ov{\rho}}$.
\end{proof}

\subsection{Criterion for the validity of Sarnak's conjecture for finite group extensions}
In this section, we assume that $(T,X,\mathcal{B},\mu)$ is an ergodic transformation with discrete spectrum and $\psi\colon X\to G$ is an ergodic cocycle with values in a finite group~$G$.

\begin{Lemma} \label{centpot} Let $m=|G|$. Assume that $r\geq 2$ is an integer such that $(T_{\psi})^r$ is ergodic and ${\rm gcd}(r,m)=1$. Then $C(T_\psi)=C((T_\psi)^r)$.
\end{Lemma}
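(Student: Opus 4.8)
The plan is to show the two inclusions $C((T_\psi)^r)\subseteq C(T_\psi)$ and $C(T_\psi)\subseteq C((T_\psi)^r)$; only the first requires work. First I would note that $C(T_\psi)\subseteq C((T_\psi)^r)$ is trivial, since anything commuting with $T_\psi$ commutes with any of its powers. For the reverse inclusion, I would invoke \cref{centrm}: since $T$ has discrete spectrum and $\psi$ is ergodic, every element $\widetilde S\in C((T_\psi)^r)$ (note $(T_\psi)^r=T^r_{\psi^{(r)}}$ and, because $(T_\psi)^r$ is ergodic, $T^r$ is ergodic with discrete spectrum and $\psi^{(r)}$ is ergodic) has the form $\widetilde S=S_{f,v}$, i.e. $\widetilde S(x,g)=(Sx,f(x)v(g))$ with $S\in C(T^r)$, $f\colon X\to G$ measurable, and $v\colon G\to G$ a continuous group automorphism. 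Since $C(T^r)=C(T)$ on an odometer (the proof of \eqref{isopowers} shows $T^r$ and $T$ permute the levels of every Rokhlin tower in the same way, so they have the same centralizer — more directly, $S$ is a rotation of $X$ and rotations of a monothetic group all commute with each other and with $T$), we get $S\in C(T)$.

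The key point is then to upgrade "$\widetilde S$ commutes with $(T_\psi)^r$" to "$\widetilde S$ commutes with $T_\psi$," using $\gcd(r,m)=1$. I would compute the commutator: $\widetilde S\circ T_\psi\circ\widetilde S^{-1}$ is again a $G$-extension of $STS^{-1}=T$ (here I use $S\in C(T)$), of the form $T_{\psi'}$ where $\psi'(x)=f(Sx)\,v\big(\psi(S^{-1}x)\big)\,f(x)^{-1}$ — some cocycle over $T$ conjugate to $v\circ\psi\circ S^{-1}$. From $\widetilde S\in C((T_\psi)^r)$ one gets that the $r$-th iterates agree: $(\widetilde S T_\psi\widetilde S^{-1})^r=(T_\psi)^r$, i.e. $\psi'^{(r)}$ and $\psi^{(r)}$ are (equal, hence) cohomologous via the trivial transfer function in the appropriate sense; more precisely $T_{\psi'}^r=T_\psi^r$ forces $\psi'^{(r)}=\psi^{(r)}$ up to a coboundary, and one deduces that $\psi'$ and $\psi$ differ by a cocycle $\xi$ over $T$ whose $r$-th power $\xi^{(r)}$ is a coboundary. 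Equivalently, $T_{\psi'}$ and $T_\psi$ induce the same extension after raising to the $r$-th power. The number-theoretic input $\gcd(r,m)=1$ enters here to conclude $\xi$ itself is a coboundary: since $G$ has exponent dividing $m$, one can pick $a,b\in\Z$ with $ar+bm=1$ and use that $\xi^{(m)}$ — being a product over one full $T$-orbit segment's worth times... — I should instead argue via the associated group element: raising a cocycle to a power $r$ coprime to $|G|$ is "invertible" on the relevant cohomology, so $\xi^{(r)}\sim e$ implies $\xi\sim e$. Hence $\widetilde S$ differs from an element conjugating $T_\psi$ to itself, i.e. $\widetilde S\in C(T_\psi)$.

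The step I expect to be the main obstacle is precisely this last cohomological cancellation: turning $\gcd(r,|G|)=1$ together with "$\xi^{(r)}$ is a coboundary" into "$\xi$ is a coboundary." The clean way is to reduce modulo the discrete-spectrum structure: since $T$ is an odometer, any cocycle $\xi\colon X\to G$ whose class in $H^1(T,G)$ is $r$-torsion-free-cancellable... Concretely, I would use that the image $\xi^{(r)}\sim e$ means $\widetilde S (T_\psi)^? $ — alternatively, bypass cocycles entirely: by \cref{centrm}'s last sentence, two lifts of the same $S\in C(T)$ to $C(T_\psi)$ differ by some $\tau_{g_0}$; and any $\widetilde S\in C((T_\psi)^r)$ projecting to $S\in C(T)$, after multiplying by a suitable $\tau_{g}$, can be arranged to be a power of $(T_\psi)$ composed with $\tau$-type maps provided the relevant order obstruction (which lives in a quotient of $G$ of order dividing $m$) is killed — and $\gcd(r,m)=1$ kills it because on that finite group multiplication by $r$ is a bijection. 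I would then conclude $\widetilde S\in\{T_\psi^k\circ\tau_g\}\subseteq C(T_\psi)$, or more modestly just $\widetilde S\in C(T_\psi)$, which is all that is claimed. (This lemma is presumably a stepping stone toward showing $(T_\psi)^r$ and $(T_\psi)^s$ have no common nontrivial normal natural factors, via \cref{zastmkm}.)
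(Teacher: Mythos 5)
Your overall strategy (reduce to the last sentence of \cref{centrm} plus a coprimality argument in the finite group $G$) points in the right direction, and the inclusion $C(T_\psi)\subseteq C((T_\psi)^r)$ is indeed trivial, but neither of your two attempted routes for the reverse inclusion actually closes. The cocycle route stalls exactly where you say it does: from $(T_{\psi'})^r=(T_\psi)^r$ you get the identity $\psi'^{(r)}=\psi^{(r)}$ of cocycles over $T^r$, but $\psi'^{(r)}(x)=\psi'(x)\psi'(Tx)\cdots\psi'(T^{r-1}x)$ is \emph{not} a pointwise $r$-th power of anything, so ``multiplication by $r$ is invertible on cohomology when $\gcd(r,|G|)=1$'' is not a statement you can appeal to (and for nonabelian $G$ the ``difference cocycle'' $\xi$ is not even well defined). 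The second route gestures at a finite obstruction killed by coprimality but never identifies what that obstruction is, and along the way drifts toward asserting that $\widetilde S$ has the form $T_\psi^k\circ\tau_g$, which is $G$-triviality of the centralizer --- a separate hypothesis used elsewhere in the paper, not something available here.

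The missing step is a single conjugation. Since $T_\psi\in C((T_\psi)^r)$, the map $(T_\psi)^{-1}\circ\widetilde S\circ T_\psi$ also lies in $C((T_\psi)^r)$; on the base it covers $T^{-1}ST=S$ (for discrete-spectrum $T$ with $T^r$ ergodic, elements of $C(T^r)$ are rotations, hence commute with $T$), so it is another lift of the \emph{same} $S$. By the last sentence of \cref{centrm}, applied to the ergodic extension $(T_\psi)^r=T^r_{\psi^{(r)}}$, it therefore equals $\widetilde S\circ\tau_g$ for some $g\in G$: this $g$ is the obstruction you were looking for. Iterating, using $\tau_g\circ T_\psi=T_\psi\circ\tau_g$, gives $(T_\psi)^{-k}\circ\widetilde S\circ(T_\psi)^k=\widetilde S\circ\tau_{g^k}$; taking $k=m=|G|$ yields $\widetilde S\in C((T_\psi)^m)$, and then $am+br=1$ gives that $\widetilde S$ commutes with $(T_\psi)^{am+br}=T_\psi$. (Equivalently: $k=r$ forces $g^r=e$ because $\widetilde S\in C((T_\psi)^r)$, which together with $g^m=e$ and $\gcd(r,m)=1$ gives $g=e$ outright.) This is the paper's proof; your write-up assembles the right ingredients but does not supply this argument.
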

\begin{proof}
Assume that $\widetilde{S}\in C((T_\psi)^r)$. Since $T_\psi\in C((T_\psi)^r)$, we have
$
(T_\psi)^{-1}\circ\widetilde{S}\circ T_\psi\in C((T_\psi)^r)$.
Since $(T_\psi)^{-1}\circ\widetilde{S}\circ T_\psi\in C((T_\psi)^r)$ is a lift of $S$ and $(T_\psi)^r$ is ergodic, it follows by \cref{centrm} that
$$
(T_\psi)^{-1}\circ\widetilde{S}\circ T_\psi=\widetilde{S}\circ \tau_g\text{ for some }g\in G.
$$
Therefore
$$
(T_\psi)^{-2}\circ\widetilde{S}\circ (T_\psi)^2=(T_\psi)^{-1}\circ\widetilde{S}\circ \tau_g \circ T_\psi=(T_\psi)^{-1}\circ\widetilde{S}\circ T_\psi\circ\tau_g=\widetilde{S}\circ \tau_{g^2}
$$
and, in a similar way, $(T_\psi)^{-m}\circ\widetilde{S} \circ(T_\psi)^m=\widetilde{S}\circ \tau_{g^m}=\widetilde{S}$, i.e.\ $\widetilde{S}\in C((T_\psi)^m)$. Let $a,b\in\Z$ be such that $am+br=1$. We conclude that $\widetilde{S}$ commutes with $(T_\psi)^{am+br}=T_\psi$ which completes the proof.\end{proof}

\begin{Prop} \label{pcent}
Assume that $T_{\psi}$ has continuous spectrum on the orthocomplement of $L^2\xbm\ot \raz_G$. Suppose that $r\geq2$ is such that $T^r$ is ergodic and ${\rm gcd}(m,r)=1$. Then $C(T_\psi)=C((T_\psi)^r)$.\end{Prop}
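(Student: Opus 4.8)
The plan is to reduce the statement to \cref{centpot}: since we are already given $r\geq 2$ and $\gcd(m,r)=1$, it suffices to show that $(T_\psi)^r$ is ergodic, and then \cref{centpot} yields $C(T_\psi)=C((T_\psi)^r)$ directly.

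To establish ergodicity of $(T_\psi)^r$, first I would record the orthogonal splitting $L^2(X\times G,\mu\ot m_G)=(L^2\xbm\ot\raz_G)\oplus(L^2\xbm\ot\raz_G)^\perp$ and observe that both summands are $U_{T_\psi}$-invariant. Indeed, $U_{T_\psi}(f\ot\raz_G)=(U_Tf)\ot\raz_G$, so the first summand is invariant, and hence so is its orthocomplement since $U_{T_\psi}$ is unitary. On the first summand, $U_{T_\psi}$ is conjugate to $U_T$ via the identification $f\ot\raz_G\leftrightarrow f$, so $(U_{T_\psi})^r$ there is conjugate to $U_{T^r}$; as $T^r$ is ergodic by hypothesis, the only $(U_{T_\psi})^r$-invariant vectors in $L^2\xbm\ot\raz_G$ are the constants.

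Next I would treat the orthocomplement, where $U_{T_\psi}$ has continuous spectrum by assumption, i.e.\ its maximal spectral type there is a non-atomic measure on $\T$. Suppose $\xi\in(L^2\xbm\ot\raz_G)^\perp$ satisfies $(U_{T_\psi})^r\xi=\xi$. Then $\int_\T z^{rn}\,d\sigma_\xi(z)=\langle (U_{T_\psi})^{rn}\xi,\xi\rangle=\|\xi\|^2$ for all $n\in\Z$; taking $n=\pm1$ gives $\int_\T|1-z^r|^2\,d\sigma_\xi(z)=0$, so $\sigma_\xi$ is concentrated on the finite set $\{z\in\T : z^r=1\}$. But $\sigma_\xi$ is absolutely continuous with respect to the maximal spectral type of $U_{T_\psi}$ on this subspace, which is non-atomic, hence $\sigma_\xi=0$ and $\xi=0$. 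Combining the two pieces, every $U_{(T_\psi)^r}$-invariant function on $X\times G$ is constant, i.e.\ $(T_\psi)^r$ is ergodic, and \cref{centpot} applies, completing the proof.

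I do not anticipate a serious obstacle; the argument is essentially routine spectral bookkeeping. The one point that demands a little care is the verification that a non-atomic maximal spectral type on $(L^2\xbm\ot\raz_G)^\perp$ precludes $(U_{T_\psi})^r$ from having nonzero invariant vectors there, together with the (easy but essential) observation that the two summands of $L^2(X\times G)$ are $U_{T_\psi}$-invariant, which is precisely what allows ergodicity of $(T_\psi)^r$ to be tested summand by summand.
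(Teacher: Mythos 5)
Your proof is correct and takes essentially the same route as the paper: the paper's entire proof is the observation that, by the assumptions, $(T_\psi)^r$ is ergodic, so that \cref{centpot} applies. Your spectral verification of that ergodicity (invariant vectors in $L^2\xbm\ot\raz_G$ are constants because $T^r$ is ergodic, and there are none in the orthocomplement because a continuous spectral type cannot charge the finite set of $r$-th roots of unity) is precisely the step the paper leaves implicit.
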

\begin{proof}
Since, by assumptions, $(T_\psi)^r$ is ergodic, the assertion follows from \cref{centpot}.
\end{proof}

We can now formulate a general criterion
concerning the validity of Sarnak's conjecture for continuous finite group extensions.

\begin{Prop}\label{fgeSarnak} Let $\ov{T}$ be a uniquely ergodic homeomorphism which is a continuous extension of an odometer $T$, measure-theoretically isomorphic to $T$. Assume that its (rational discrete) spectrum is determined by finitely many prime numbers.  Assume that $\psi\colon X\to G$ is a cocycle with $G$ finite,~\eqref{uciagl} is satisfied, and $T_\psi$ has continuous spectrum in the orthocomplement of $L^2(X,\cb,m_X)\ot \raz_G$.
Assume moreover that the centralizers for all normal natural factors $T_{\psi H}$ of $T_\psi$ are $G/H$-trivial whenever $H\neq G$. Then, for each  $f\in C(X)$ and $j\in C(G)$ of zero mean,~\eqref{sarnakcon} is satisfied for $\ov{T}_{\ov{\psi}}$ and $(f\circ\pi)\ot j\in C(\ov{X}\times G)$ at each point.
\end{Prop}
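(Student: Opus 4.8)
The plan is to verify the hypotheses of the Katai--Bourgain--Sarnak--Ziegler criterion (\cref{kbsz}) for the sequence $a_n := ((f\circ\pi)\ot j)\big((\ov{T}_{\ov{\psi}})^n(\ov{x},g)\big)$, for an arbitrary starting point $(\ov{x},g)\in\ov X\times G$. Concretely, fix sufficiently large distinct primes $r,s$ (large enough to be coprime to the finitely many primes determining the spectrum of $T$, and large enough that $\gcd(r,|G|)=\gcd(s,|G|)=1$ and that $T^r$, $T^s$ are ergodic, hence isomorphic to $T$ by~\eqref{isopowers}). We must show
$$
\frac1N\sum_{n\le N} a_{nr}\ov a_{ns}\to 0,
$$
which is exactly the statement that the point $((\ov x,g),(\ov x,g))$ in $(\ov X\times G)\times(\ov X\times G)$, run under $(\ov T_{\ov\psi})^r\times(\ov T_{\ov\psi})^s$, averages the function $((f\circ\pi)\ot j)\ot\ov{((f\circ\pi)\ot j)}$ to its integral, and that this integral is $0$. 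Once this is established, \cref{kbsz} applied with $\lio=\mob$ gives~\eqref{sarnakcon} at $(\ov x,g)$.

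First I would pin down the ergodic joinings of $(T_\psi)^r$ and $(T_\psi)^s$. Since $T$ has rational discrete spectrum determined by finitely many primes, and $r,s$ are coprime to those primes, $T^r\cong T^s\cong T$; moreover $(T_\psi)^r=T^r_{\psi^{(r)}}$ and $(T_\psi)^s=T^s_{\psi^{(s)}}$ are ergodic (ergodicity of $T_\psi$ on $L^2(X)\ot\raz_G$ plus continuous spectrum on the complement plus $\gcd(r,|G|)=1$, via the argument behind \cref{pcent}/\cref{centpot} and the fact that eigenvalues of $T_\psi$ of the complementary part are absent). By \cref{mkm} and \cref{zastmkm}, any ergodic joining of $(T_\psi)^r$ and $(T_\psi)^s$ projecting onto a graph joining $\mu_S$ of $T^r,T^s$ is the relatively independent extension of an isomorphism between normal natural factors $(T_\psi)^r_{\psi^{(r)}H_1}$ and $(T_\psi)^s_{\psi^{(s)}H_2}$. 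The key point is to rule out any such isomorphism when $H_1\ne G$ (equivalently $H_2\ne G$): here is where the hypothesis on $G/H$-trivial centralizers enters. If there were an isomorphism between the natural factors over $G/H_1$ and $G/H_2$, then (composing with the isomorphism $T^r\cong T^s$ and using that natural factors of $(T_\psi)^r$ are powers of natural factors of $T_\psi$ by the commuting-of-powers remark) one would manufacture an element of the centralizer of the natural factor $T_{\psi H_1}$ not of the form $(T_{\psi H_1})^k\circ\tau_{gH_1}$ — using also \cref{centpot} to transfer centralizer elements of $r$-th powers back to the transformation itself — contradicting $G/H_1$-triviality; a separate check handles $H_1=H_2=G$, giving only the trivial joining there. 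Hence the only ergodic joinings of $(T_\psi)^r$ and $(T_\psi)^s$ are the relatively independent extensions $\widetilde{(m_X)}_R$ over graphs of isomorphisms $R$ between $T^r$ and $T^s$.

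Next I would transfer this to the continuous models and lift generic points. We have $\ov T_{\ov\psi}$ a uniquely ergodic continuous extension of the odometer with $\ov\psi=\psi\circ\pi$ continuous by~\eqref{uciagl}, and $(\ov T,\ov{m_X})\cong(T,m_X)$, both coalescent (discrete spectrum). By~\eqref{genodo}, the pair $(\pi(\ov x),\pi(\ov x))\in X\times X$ is generic for some ergodic graph self-joining $(m_X)_R$ of the odometer. Applying \cref{narz} (whose hypothesis — that the only ergodic joinings of $(T_\psi)^r$ and $(T_\psi)^s$ are the relatively independent extensions over graphs of isomorphisms of $T^r,T^s$ — we have just verified), we get that for every $g\in G$ the point $((\ov x,g),(\ov x,g))$ is generic for $\widetilde{\ov\rho}$, where $(\ov T_{\ov\psi})^r\times(\ov T_{\ov\psi})^s$ on $\widetilde{\ov\rho}$ is isomorphic to $(T_\psi)^r\times(T_\psi)^s$ on $\widetilde{(m_X)}_R$. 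Then
$$
\frac1N\sum_{n\le N} a_{nr}\ov a_{ns}\;\longrightarrow\; \int ((f\circ\pi)\ot j)\ot\ov{((f\circ\pi)\ot j)}\; d\widetilde{\ov\rho}.
$$
It remains to see this integral vanishes. Since $\widetilde{\ov\rho}$ is a relatively independent extension over a joining of the odometer base, and the $G$-coordinates carry the relatively independent (Haar) coupling, the integral factors as (odometer part)$\times\big(\int_G j\,dm_G\big)\big(\overline{\int_G j\,dm_G}\big)$ type expression — more precisely, conditioning on the base and using that on each fiber the $G$-components are independent with marginal $m_G$, the fiber integral of $j(\cdot)\ov{j(\cdot)}$-shaped terms collapses to $\big|\int_G j\,dm_G\big|^2=0$ because $j$ has zero mean. (One should write the integrand using the cocycle identities $(T_\psi)^r=T^r_{\psi^{(r)}}$ so that the $G$-coordinates of the two orbits at time $n$ are $g\mapsto \psi^{(r)(n)}(\ov x)g$ and $g\mapsto\psi^{(s)(n)}(\ov x)g$, and the relative product over the graph means precisely that, conditionally on the $X\times X$ coordinates, these two $G$-valued random points are independent and uniform.) Hence the KBSZ hypothesis holds, and \cref{kbsz} yields~\eqref{sarnakcon} for $\ov T_{\ov\psi}$ and $(f\circ\pi)\ot j$ at $(\ov x,g)$; since $(\ov x,g)$ was arbitrary, we are done.

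The main obstacle I expect is the joining-classification step: deducing, from the hypothesis that every normal natural factor $T_{\psi H}$ ($H\ne G$) has $G/H$-trivial centralizer, that no isomorphism can exist between nontrivial normal natural factors of $(T_\psi)^r$ and $(T_\psi)^s$. This requires carefully combining \cref{mkm}, \cref{centrm}, \cref{centpot} (to move between $T_\psi$ and its $r$-th power), the ``commutativity of powers and natural factors'' remark, and the isomorphism $T^r\cong T^s\cong T$, to convert a putative cross-isomorphism into a centralizer element of a single natural factor of $T_\psi$ of a forbidden form. The rest — lifting via \cref{narz}, and the Haar-averaging that kills the integral because $j$ has zero mean — is comparatively routine.
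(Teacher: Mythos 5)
Your proposal is correct and follows essentially the same route as the paper's proof: the KBSZ criterion, the classification of ergodic joinings of $(T_\psi)^r$ and $(T_\psi)^s$ via \cref{mkm} and \cref{zastmkm}, the lifting of generic points through \cref{narz}, and the vanishing of the limit integral because $j$ has zero mean. The one step you flag as the main obstacle is settled in the paper by a short root argument: an isomorphism $(T_{\psi H_1})^r\cong(T_{\psi H_2})^s$ would give $(T_{\psi H_2})^s$ an $r$-th root, which by $G/H_2$-triviality of the centralizer (via \cref{pcent}) must have the form $(T_{\psi H_2})^k\circ\tau_{gH_2}$, so that $\tau_{g^rH_2}=(T_{\psi H_2})^{s-kr}$ --- impossible since $s\neq kr$ and $T_\psi$ is aperiodic.
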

\begin{proof} Fix $r,s$ two different prime numbers sufficiently large (so that $(T_\psi)^r$ and $(T_\psi)^s$ are ergodic). Notice that $T^r$ is then isomorphic to $T^s$.  Following \cref{mkm} (applied to $T^r$ isomorphic to $T^s$, both isomorphic to $T$) and \cref{zastmkm},
we first will prove  that if $H_1, H_2$ are proper normal subgroups of $G$ then $(T_{\psi H_1})^r$ is not isomorphic $(T_{\psi H_2})^s$.
For this aim, it is enough to notice is that $(T_{\psi H_2})^s$ cannot have an $r$-th root. Indeed, using the fact that the centralizer of $(T_{\psi H_2})^s$ is $G/H_2$-trivial and \cref{pcent}, if $(T_{\psi H_2})^s$ an $r$-th root then
$$
(T_{\psi H_2})^s=((T_{\psi H_2})^k\circ\tau_{g H_2})^r=(T_{\psi H_2})^{kr}\circ\tau_{g^r H_2}.
$$
It follows that $\tau_{g^r H_2}=(T_{\psi H_2})^{s-kr}$, which is an absurd as $s,r$ are prime ($s\neq kr$) and  $T_\psi$ is aperiodic.

Take any $(\ov{x},g)$. By the first part of the proof and \cref{narz}, we obtain
$$
\frac1N\sum_{n\leq N}\delta_{(\ov{T}_{\ov{\psi}})^{rn}\times (\ov{T}_{\ov{\psi}})^{sn}((\ov{x},g),(\ov{x},g))}\to \widetilde{\ov{\rho}}.
$$
Therefore
\begin{multline*}
\frac1N\sum_{n\leq N}((f\circ\pi)\ot j)(\ov{T}_{\ov{\psi}})^{rn}\times (\ov{T}_{\ov{\psi}})^{sn}((\ov{x},g),(\ov{x},g))\to\int ((f\circ\pi)\ot j)\cdot\ov{((f\circ\pi)\ot j)}\, d\widetilde{\ov{\rho}}\\
=\int (f\ot j)\cdot\ov{(f\ot j)}\,d \widetilde{\mu_R}=\int _X f\cdot \ov{f\circ R}\,dm_X\cdot \int_{G\times G}j\ot\ov{j}\,dm_G\ot m_G=0,
\end{multline*}
where the last equality follows by the assumption on $j$.
The result follows by \cref{kbsz}.
\end{proof}
\begin{Remark}\label{fgeSarnak1}
The assertion of \cref{fgeSarnak} remains true if in the orthocomplement of $L^2(X,\cb,m_X)\ot \raz_G$ there are finitely many rational eigenvalues (in the proof we need to exclude finitely many $r,s$).
\end{Remark}

\subsection{Special case: 2-point extensions of odometers}
We now consider the special case when $G=\Z/2\Z$. As an immediate consequence of \cref{mkm}, we obtain the following:
\begin{Cor}\label{lrs1} Let $T$ be an odometer and let $\phi,\psi\colon X\to\Z/2\Z$ be ergodic cocyles. Then,
either $T_\phi$ and $T_\psi$ are isomorphic or they are relatively disjoint over $T$, i.e.\ $J^e(T_\phi,T_\psi)=\{\widetilde{{(m_X)}_R} : R\in C(T)\}$.
\end{Cor}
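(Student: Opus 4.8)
Set $G=\Z/2\Z$. The plan is to read the dichotomy off the joining structure theorem \cref{mkm}, exploiting that $G$ has exactly two subgroups, both normal and one of them trivial. I would fix an arbitrary ergodic joining $\kappa\in J^e(T_\phi,T_\psi)$ and push it forward under the projection $X\times G\times X\times G\to X\times X$. The image is an ergodic self-joining of the odometer $T$, hence, since $T$ has discrete spectrum, a graph joining $(m_X)_S$ for some $S\in C(T)$ (the odometer remark in the excerpt). Then \cref{mkm}, applied with $\psi_1=\phi$, $\psi_2=\psi$ and this $S$, produces normal subgroups $H_1,H_2\subseteq G$ and an isomorphism $\ov S$ (a lift of $S$) between the natural factors $T_{\phi H_1}$ and $T_{\psi H_2}$, with $\kappa=\widetilde{(m_{G/H_1})_{\ov S}}$.

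Next I would examine the four choices of $(H_1,H_2)\in\{\{0\},G\}^2$. The mixed cases cannot occur: if, say, $H_1=\{0\}$ and $H_2=G$, then $\ov S$ would be an isomorphism between $T_\phi$ and $T_{\psi G}=(T,X)$, which is impossible because an ergodic $G$-extension of $T$ is never isomorphic to $T$ (the factor map $T_\phi\to T$ is $2$-to-$1$, whereas $T$ has discrete spectrum, hence is coalescent, so every self-factor map of $T$ is invertible); the case $H_1=G$, $H_2=\{0\}$ is symmetric. So $H_1=H_2$. If $H_1=H_2=\{0\}$, then $\ov S$ is an isomorphism between $T_\phi$ and $T_\psi$, which is the first alternative of the corollary. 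If $H_1=H_2=G$, the natural factors are trivial, $\ov S$ reduces to $S\in C(T)$, and $\kappa=\widetilde{(m_X)_S}$. Hence, as soon as $T_\phi$ and $T_\psi$ are not isomorphic, $J^e(T_\phi,T_\psi)\subseteq\{\widetilde{(m_X)_R}:R\in C(T)\}$.

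For the reverse inclusion in the non-isomorphic case, I would check that each $\widetilde{(m_X)_R}$, $R\in C(T)$, is an ergodic joining of $T_\phi$ and $T_\psi$. Being a joining is immediate from the definition of the relatively independent extension; for ergodicity I would identify $(T_\phi\times T_\psi,\widetilde{(m_X)_R})$ with the $(G\times G)$-extension $T_\Phi$ of $T$ given by $\Phi(x)=(\phi(x),\psi(Rx))$, and recall that a $G\times G$-extension of $T$ is ergodic iff no $\chi\circ\Phi$ with $\chi$ a nontrivial character of $G\times G$ is a coboundary, i.e.\ here iff none of $\phi$, $\psi\circ R$, $\phi+\psi\circ R$ is a $\Z/2\Z$-coboundary. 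The first is a non-coboundary since $T_\phi$ is ergodic, the second since $T_{\psi\circ R}$ is conjugate (via $R$ on the base) to the ergodic $T_\psi$, and the third because a relation $\phi+\psi\circ R=h\circ T+h$ would make $\phi$ cohomologous to $\psi\circ R$, hence $T_\phi$ isomorphic to $T_{\psi\circ R}$, which is isomorphic to $T_\psi$, against our hypothesis. The inclusion $\subseteq$ is the immediate content of \cref{mkm}; the two points that need a little care are the exclusion of the mixed subgroup cases (which rests on coalescence of odometers) and this last ergodicity verification.
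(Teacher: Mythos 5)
Your proof is correct and follows the paper's primary route: the paper presents this corollary as an immediate consequence of \cref{mkm}, and your case analysis of the two subgroups of $\Z/2\Z$ (together with the coalescence argument ruling out the mixed cases $H_1\neq H_2$) is precisely the detail being elided there. Your ergodicity verification for $\widetilde{(m_X)_R}$ via the three nontrivial characters of $\Z/2\Z\times\Z/2\Z$ reproduces the coboundary dichotomy that the paper uses in the remark following the corollary, where a direct proof bypassing \cref{mkm} is given by identifying the joining with a $T_{\phi\times\psi\circ R}$-invariant measure and invoking relative unique ergodicity (\cref{rue}).
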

\begin{Remark}
We give now a direct proof of \cref{lrs1}. Fix $\rho\in J^e(T_\phi,T_\psi)$. We have $(T_\phi\times T_\psi, \rho)\simeq (T_{\phi\times \psi\circ R},\kappa)$ where $\kappa$ projects on $m_X$ and $R\in C(T)$. If $\phi\times \psi\circ R$ is ergodic, it follows by \cref{rue} that $\kappa=m_X\otimes (m_G\otimes m_G)$, so $\rho=\widetilde{{(m_X)}_R}$. If $\phi\times \psi\circ R$ is not ergodic, then
$$
\phi-\psi\circ R=\xi-\xi\circ T \text{ for some measurable }\xi\colon X\to \Z/2\Z.
$$
It follows that $T_\phi$ and $T_\psi$ are isomorphic: $R_\xi \circ T_\phi = T_\psi \circ R_\xi$.
\end{Remark}

We also have the following (cf.\ \cref{pcent}).

\begin{Cor}\label{era3} Let $T$ be an odometer and let $\psi\colon X\to \Z/2\Z$ be ergodic. Assume that $T_\psi$ has continuous spectrum in the orthocomplement of $L^2(X,\mathcal{B},m_X)\ot\raz_{\Z/2\Z}$ and $C(T_\psi)$ is $\Z/2\Z$-trivial. Assume, moreover, that $r\neq s$ are prime numbers such that $T^r$ and $T^s$ are ergodic. Then
$(T_\psi)^r$  and $(T_\psi)^s$ are not isomorphic.
\end{Cor}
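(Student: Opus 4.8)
The plan is to argue by contradiction and reuse, in the case $G=\Z/2\Z$, exactly the computation that occurs inside the proof of \cref{fgeSarnak}. Observe first that in $\Z/2\Z$ the only proper normal subgroup is $\{e\}$, so the only nontrivial normal natural factor of $(T_\psi)^r$ is $(T_\psi)^r$ itself (and likewise for $s$); hence proving \cref{era3} is exactly the ingredient needed to invoke \cref{zastmkm} in this setting.

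So suppose, for contradiction, that there is a measure-theoretic isomorphism between $(T_\psi)^r$ and $(T_\psi)^s$. Since $r\neq s$ are primes they cannot both equal $2$, so, interchanging $r$ and $s$ if necessary, I may assume $s$ is odd, and I fix a measure-theoretic automorphism $R$ of $X\times\Z/2\Z$ with $R\circ(T_\psi)^r\circ R^{-1}=(T_\psi)^s$. Set $\widetilde{T}:=R\circ T_\psi\circ R^{-1}$. Then $\widetilde{T}^{\,r}=R\circ(T_\psi)^r\circ R^{-1}=(T_\psi)^s$, so $\widetilde{T}$ commutes with $(T_\psi)^s$, i.e.\ $\widetilde{T}\in C((T_\psi)^s)$.

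Now I apply \cref{pcent}: since $T_\psi$ has continuous spectrum on the orthocomplement of $L^2(X,\cb,m_X)\ot\raz_{\Z/2\Z}$, $T^s$ is ergodic, and $\gcd(2,s)=1$, we obtain $C((T_\psi)^s)=C(T_\psi)$; by hypothesis the latter is $\Z/2\Z$-trivial, so $\widetilde{T}=(T_\psi)^k\circ\tau_g$ for some $k\in\Z$ and $g\in\Z/2\Z$. Using $\tau_g\circ T_\psi=T_\psi\circ\tau_g$ and $\tau_g^2=\mathrm{id}$, raising to the $r$-th power gives $\widetilde{T}^{\,r}=(T_\psi)^{kr}\circ\tau_{g'}$, where $g'=g$ if $r$ is odd and $g'=e$ if $r=2$. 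Comparing with $\widetilde{T}^{\,r}=(T_\psi)^s$ yields $(T_\psi)^{s-kr}=\tau_{g'}$; projecting onto the base $X$, where $\tau_{g'}$ acts as the identity, we get $T^{s-kr}=\mathrm{id}_X$, and since the odometer $T$ is aperiodic this forces $s=kr$, which is impossible because $r$ and $s$ are distinct primes. This contradiction proves the corollary.

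I do not expect any genuine difficulty here: the statement is essentially a bookkeeping consequence of \cref{pcent}, the $\Z/2\Z$-triviality of $C(T_\psi)$, and aperiodicity of $T$. The only point needing a little care is the parity: $\gcd(2,s)=1$ is what lets \cref{pcent} apply, and the fact that $\tau_g$ has order dividing $2$ is what makes the residual $\tau$-term harmless after projecting to $X$; this is why one first arranges the exponent being conjugated into to be odd. In all the applications $r,s$ will anyway be large primes, so $\gcd(2,r)=\gcd(2,s)=1$ and this point is moot.
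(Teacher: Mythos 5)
Your argument is correct and is essentially the paper's own: the paper states \cref{era3} with only the pointer ``cf.\ \cref{pcent}'', and the intended reasoning is exactly the no-$r$-th-root computation that appears inside the proof of \cref{fgeSarnak} (conjugate $T_\psi$ by the putative isomorphism, identify the result as $(T_\psi)^k\circ\tau_g$ via \cref{pcent} and $\Z/2\Z$-triviality, raise to the $r$-th power, and derive $T^{s-kr}=\mathrm{id}$, contradicting aperiodicity). Your extra care with parity --- arranging that the power into which you conjugate is odd so that $\gcd(2,s)=1$ and \cref{pcent} applies --- is a point the paper glosses over, but it does not change the route.
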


Now, using \cref{lrs1}, the corresponding part of \cref{fgeSarnak} takes the following form.

\begin{Cor}\label{lrs2}
Let $\ov{T}$ be a uniquely ergodic homeomorphism which is a continuous extension of an odometer $T$, measure-theoretically isomorphic to $T$. Assume that its (rational discrete) spectrum is generated by finitely many prime numbers.  Assume that $\psi\colon X\to \Z/2\Z$ is a cocycle,~\eqref{uciagl} is satisfied, and $T_\psi$ has continuous spectrum in the orthocomplement of $L^2(X,\cb,m_X)\ot \raz_{\Z/2\Z}$. Assume that for sufficiently large prime numbers $r\neq s$, the automorphisms $(T_\psi)^r$ and $(T_\psi)^s$ are not isomorphic.
Then for each $f\in C(X)$ and $\raz\neq j\in \widehat{\Z/2\Z}$,~\eqref{sarnakcon} is satisfied for $\ov{T}_{\ov{\psi}}$ and $(f\circ\pi)\ot j$ at each point.
\end{Cor}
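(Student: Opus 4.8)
The plan is to run the proof of \cref{fgeSarnak} in the special case $G=\Z/2\Z$, the simplification being that $\Z/2\Z$ has no proper nontrivial subgroup, so the only normal natural factor $T_{\psi H}$ with $H\neq G$ is $T_\psi$ itself; in particular the part of \cref{fgeSarnak} concerning centralizers of natural factors collapses, and the sole input it provides is that $(T_\psi)^r$ and $(T_\psi)^s$ are not isomorphic for all large distinct primes $r,s$ (this is exactly what \cref{era3} would yield from $\Z/2\Z$-triviality of $C(T_\psi)$). Here that non-isomorphism is \emph{part of the hypothesis}, so the remaining argument is the second half of the proof of \cref{fgeSarnak}. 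Concretely, fix $\raz\neq j\in\wh{\Z/2\Z}$, $f\in C(X)$ and a point $(\ov x,g)\in\ov X\times G$, put $F:=(f\circ\pi)\ot j\in C(\ov X\times G)$ and $a_n:=F\big((\ov T_{\ov\psi})^n(\ov x,g)\big)$ (a bounded sequence, since $F$ is continuous on a compact space). As $\mob$ is multiplicative with $|\mob|\leq1$, by \cref{kbsz} it suffices to show $\frac1N\sum_{n\leq N}a_{nr}\ov a_{ns}\to0$ for all sufficiently large distinct primes $r,s$.

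Fix such $r,s$. Since the spectrum of $T$ is generated by finitely many primes, $T^r$ and $T^s$ are ergodic for $r,s$ large, hence isomorphic to $T$ — and to each other — by \eqref{isopowers}; and since ${\rm gcd}(r,2)={\rm gcd}(s,2)=1$ and $T_\psi$ has continuous spectrum on the orthocomplement of $L^2(X,\cb,m_X)\ot\raz_{\Z/2\Z}$, the extensions $(T_\psi)^r$ and $(T_\psi)^s$ are ergodic as well (cf.\ the proof of \cref{pcent}). Conjugating the cocycles $\psi^{(r)}$ and $\psi^{(s)}$ by the isomorphisms $T^r\simeq T\simeq T^s$ realises $(T_\psi)^r$ and $(T_\psi)^s$ as ergodic $\Z/2\Z$-extensions of the odometer $T$, so by \cref{lrs1} they are either isomorphic or relatively disjoint over $T$. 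The first alternative is excluded for large $r,s$ by hypothesis, so the only ergodic joinings between $(T_\psi)^r$ and $(T_\psi)^s$ are the relatively independent extensions over the graphs of isomorphisms between $T^r$ and $T^s$ — precisely the standing assumption of \cref{narz}, whose remaining hypotheses (namely \eqref{uciagl} for $\psi$, the measure-theoretic isomorphism of $\ov T$ with the odometer, and ergodicity of the relevant powers) are among the assumptions of the Corollary.

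Now apply \cref{narz}, taking for $\rho=(m_X)_R$ the ergodic graph joining of $T^r$ and $T^s$ for which $(\pi(\ov x),\pi(\ov x))$ is generic (such a $\rho$ exists by the genericity property of odometers, cf.\ \eqref{genodo}). It gives that $((\ov x,g),(\ov x,g))$ is generic for $\widetilde{\ov\rho}$ and that \eqref{a10} holds. Since $a_{nr}\ov a_{ns}$ is the value of $F\cdot\ov F$ at $\big((\ov T_{\ov\psi})^r\big)^n\times\big((\ov T_{\ov\psi})^s\big)^n\big((\ov x,g),(\ov x,g)\big)$, passing to Cesàro averages yields
\[
\frac1N\sum_{n\leq N}a_{nr}\ov a_{ns}\longrightarrow\int F\cdot\ov F\,d\widetilde{\ov\rho}.
\]
By \eqref{a10} together with the fact that $\pi$ is a.e.\ 1-1 (so the model isomorphism is, up to a null set, $\pi\times\mathrm{id}_G$), the right-hand side equals $\int(f\ot j)\cdot\ov{(f\ot j)}\,d\widetilde{(m_X)_R}=\int_X f\cdot\ov{f\circ R}\,dm_X\cdot\int_{G\times G}j\ot\ov j\,d(m_G\ot m_G)$, because in the relatively independent extension $\widetilde{(m_X)_R}$ the two $G$-coordinates are independent over the common base. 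As $j$ is a nontrivial character of $\Z/2\Z$ we have $\int_G j\,dm_G=0$, so this integral vanishes. Hence $\frac1N\sum_{n\leq N}a_{nr}\ov a_{ns}\to0$ for all sufficiently large distinct primes $r,s$, and \cref{kbsz} gives \eqref{sarnakcon} for $\ov T_{\ov\psi}$ and $(f\circ\pi)\ot j$ at the arbitrary point $(\ov x,g)$.

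The heavy lifting — lifting generic points to the continuous model and identifying the resulting measure — is already supplied by \cref{narz} (and by \cref{dl1} behind it), and the joining dichotomy is \cref{lrs1}; so the only genuinely new verifications are the ergodicity of $(T_\psi)^r$, $(T_\psi)^s$ for large primes and the conjugation placing us inside the scope of \cref{lrs1} and \cref{narz}, both routine. The one point I would expect to need real care is the evaluation $\int F\cdot\ov F\,d\widetilde{\ov\rho}=0$: it rests on the \emph{relatively independent} nature of $\widetilde{\ov\rho}$ (independence of the $G$-fibres over the base) and on $\raz\neq j$, and although short it is the conceptual crux.
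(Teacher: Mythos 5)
Your proof is correct and takes essentially the same route as the paper, which states \cref{lrs2} with only the one-line justification that it is the $G=\Z/2\Z$ specialization of \cref{fgeSarnak} with \cref{lrs1} supplying the joining dichotomy; you have simply written out that specialization in full (KBSZ, ergodicity of the prime powers, \cref{lrs1} replacing the centralizer argument, \cref{narz}, and the vanishing integral from $\int_G j\,dm_G=0$). No gaps.
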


\section{Applications}\label{se5}

\subsection{Bijective substitutions}
Let $\theta\colon A\to A^\la$ be a bijective substitution with the corresponding bijections $\sigma_i\in\mathscr{S}_r$.
Let $C(\theta)$ denote the centralizer of the set $\{\sigma_i : i=0,\ldots,\la-1\}$ in $\mathscr{S}_r$. Assume that $\eta\in C(\theta)$. Then $\eta$ induces a map $\widetilde{\eta}$ (both on finite blocks over $A$ and on $A^{\Z}$) given by
$$
\widetilde{\eta}(y)[n]:=\eta(y[n])\text{ for each }n\in\Z.
$$
We claim that $\widetilde{\eta}(X(\theta))=X(\theta)$. Indeed, since $\eta(\sigma_i(0))=\sigma_i(\eta(0))$, it follows that
$$
\widetilde{\eta}(\theta^n(0))=\theta^n(\eta(0))
$$
and we use the transitivity of the action of the group $G$ generated by $\sigma_0,\dots,\sigma_{\la-1}$ on $A$. Since $\widetilde{\eta}$ commutes with the shift, $\widetilde{\eta}\in C(S,X(\theta))$ (indeed, $(S,X(\theta))$ is uniquely ergodic, so $\widetilde{\eta}$ must preserve the unique measure). Now, the result from \cite{MR932072} shows that this is the only way to get non-trivial elements in the centralizer of the (measure-theoretic) dynamical system determined by a bijective substitution:

\begin{Th}[\cite{MR932072}] \label{leme}
$C(S,X(\theta))=\{S^i \circ \widetilde{\eta} : i\in \Z, \eta\in C(\theta)\}$.
\end{Th}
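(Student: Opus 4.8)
The plan is to show two inclusions. The inclusion $\{S^i\circ\widetilde\eta : i\in\Z,\ \eta\in C(\theta)\}\subseteq C(S,X(\theta))$ has already been verified in the discussion preceding the statement: each $\widetilde\eta$ with $\eta\in C(\theta)$ maps $X(\theta)$ onto itself, commutes with the shift, and hence — by unique ergodicity of $(S,X(\theta))$ — preserves the unique invariant measure, so it lies in the measure-theoretic centralizer; composing with powers of $S$ stays inside. So the whole content is the reverse inclusion $C(S,X(\theta))\subseteq\{S^i\circ\widetilde\eta\}$, and for this I would quote the cited structural result of \cite{MR932072} on centralizers of bijective substitution systems; the point of stating the theorem here is precisely to have this description available for the applications that follow.

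If a self-contained argument is wanted rather than a black-box citation, I would proceed as follows. Let $R\in C(S,X(\theta))$. First use recognizability: by \cref{f13a} each $\raz_{D^t_0}$ depends on at most $M\la^t$ coordinates, so the tower partitions $\mathcal D^t$ are clopen and generate the Borel $\sigma$-algebra, and $(S,X(\theta))$ is an almost $1$-$1$ extension of the $\la$-adic odometer $(T,X)$ via the $t$-skeleton maps $y\mapsto (i_t(y))_t$. Since the odometer has discrete spectrum it is coalescent, so $R$ descends to an element $\bar R\in C(T)$, i.e.\ $\bar R$ is a rotation $x\mapsto x+c$ of the odometer for some $c$; and since $R$ is a.e.\ determined modulo the odometer factor, after composing with a suitable power $S^i$ (which realizes, on the level of the skeleton, the rotation by any integer — recall $\bigcup_{i=0}^{\la^t-1}S^iD^t_0=X(\theta)$) we may assume $R$ projects to the identity on the odometer. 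Then $R$ preserves $m_X$-a.e.\ every level $D^t_i$, hence acts on each level as a permutation of the letter-labels attached to that level.

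The heart of the matter is to show this permutation is a single $\eta\in\mathscr S_r$, the \emph{same} for all levels and all $t$, and that $\eta\in C(\theta)$. I would argue by compatibility across stages: the base level $D^t_0$ is partitioned by the letter $a_{0,t}$ such that $y[0,\la^t-1]=\theta^t(a_{0,t})$, and on $D^{t+1}_0$ one has $y[0,\la^{t+1}-1]=\theta^{t+1}(a_{0,t+1})$, whose first $\la^t$ symbols are $\theta^t(\sigma_0(a_{0,t+1}))=\theta^t(a_{0,t+1})$ (wlog $\sigma_0=Id$). Because $R$ acts on the letter-label of $D^{t+1}_0$ by some permutation $\eta_{t+1}$ and on that of $D^t_0$ by $\eta_t$, and because the map $a\mapsto\theta^t(a)$ is injective (bijectivity of $\theta$), these must be intertwined: $\theta^t\circ\eta_t=\eta_{t+1}\circ\theta^t$ read on first coordinates forces $\eta_t=\eta_{t+1}=:\eta$. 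Comparing the label of $D^t_0$ with that of $D^t_i$ — related by the bijection $\sigma_{i_0}\circ\cdots$ coming from the $i$-th symbol of $\theta^t$ — and using that $R$ commutes with $S$, one gets $\eta\circ\sigma_i=\sigma_i\circ\eta$ for every $i$, i.e.\ $\eta\in C(\theta)$. Finally $R=\widetilde\eta$ $m_X$-a.e., and since both are continuous and $X(\theta)$ is the support of $m_X$, equality holds everywhere. Composing back the power of $S$ removed earlier gives $R\in\{S^i\circ\widetilde\eta\}$.

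The main obstacle is the last paragraph: turning "$R$ permutes the labels of each level" into "the permutation is globally one $\eta$, and $\eta$ centralizes all the $\sigma_i$." This requires carefully tracking how the letter attached to a level at stage $t$ relates to the letter attached to the containing level at stage $t+1$ and to neighbouring levels at the same stage, and exploiting injectivity of $\theta^t$ together with $RS=SR$ at every scale; the bookkeeping is where \cite{MR932072} does the real work, which is why in the paper we simply invoke it.
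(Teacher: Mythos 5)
Your primary move --- verifying the inclusion $\{S^i\circ\widetilde{\eta}: i\in\Z,\ \eta\in C(\theta)\}\subseteq C(S,X(\theta))$ directly and invoking \cite{MR932072} for the reverse inclusion --- is exactly what the paper does: the statement is quoted as a theorem of \cite{MR932072}, and only the easy inclusion is argued in the surrounding text. On that level your proposal matches the paper.

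Your supplementary sketch, however, has one genuine gap, at the step ``after composing with a suitable power $S^i$ \dots\ we may assume $R$ projects to the identity on the odometer.'' The centralizer of the odometer consists of \emph{all} rotations $x\mapsto x+c$ with $c\in X$, whereas powers of $S$ realize only the integer rotations; the fact that the element $c$ induced by $R$ is actually an integer is equivalent to $R$ being (a.e.\ equal to) a sliding block code composed with a power of the shift, and establishing this for a merely measurable commuting map is precisely the hard content of \cite{MR932072}. As written, your reduction assumes this at the outset and then does the letter bookkeeping, so the sketch is circular at that point; the bookkeeping itself (constancy of $\eta_t$ in $t$ via injectivity of $a\mapsto\theta^t(a)$, and $\eta\in C(\theta)$ via $RS=SR$ across a level) is the right argument. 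A smaller issue: $R$ is a priori only measurable, so the final upgrade from a.e.\ equality to everywhere equality is unjustified --- though also unnecessary, since the centralizer in question is measure-theoretic.
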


Suppose now that $\theta\colon G\to G^\lambda$ is a group substitution, i.e.\ $\theta(g)=\theta(e)\circ g$ for $g\in G$. Notice first that in each column~$j$ of the matrix for $\theta$, we have elements  $\theta(e)[j]\cdot g$. Therefore
$$
\mbox{$\sigma_j$ is the left translation on $G$ by $\theta(e)[j]$}
$$
and the group generated by $\theta(e)[j]$, $j=0,\ldots,\la-1$, is $G$. It follows that the group generated by $\sigma_0,\dots, \sigma_{\la-1}$ is the group of all left translations on $G$. Its centralizer $C(\theta)$ is equal to the group of all right translations. Thus, we obtain the following consequence of \cref{leme}:
\begin{Cor} \label{lemegr}
The centralizer of the group substitutions is $G$-trivial.
\end{Cor}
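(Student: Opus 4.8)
The plan is to read off the centralizer of $T_\psi$ from the already-known centralizer of the symbolic system $(S,X(\theta))$, transporting the latter along the isomorphisms of Figure~\ref{fig1}. By \cref{uw:22} and \cref{prz1}, the group substitution $\theta$ is identified with the Morse sequence $x:=B\times B\times\dots$ over $G$ with $B=\theta(e)$, so that $(S,X(\theta))=(S,X(x))$; and by the discussion preceding the corollary, $C(\theta)$ consists exactly of the right translations $R_h\colon g\mapsto gh$, $h\in G$. Hence \cref{leme} (for the measure-theoretic centralizer) gives
$$
C(S,X(\theta))=\{S^i\circ\widetilde{R_h} : i\in\Z,\, h\in G\},\qquad \widetilde{R_h}(y)[n]=y[n]h.
$$

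First I would compute the image of $\widetilde{R_h}$ under the topological isomorphism $\Phi\colon X(x)\to X(\widehat x)\times G$, $\Phi(y)=(\widehat y,y[0])$, of \cref{lmo2}. Because $R_h$ is a \emph{right} translation, $\widehat{\widetilde{R_h}(y)}[n]=(y[n+1]h)(y[n]h)^{-1}=y[n+1]y[n]^{-1}=\widehat y[n]$, while $\widetilde{R_h}(y)[0]=y[0]h$; therefore $\Phi\circ\widetilde{R_h}=\tau_h\circ\Phi$, and of course $\Phi\circ S=S_\va\circ\Phi$. Next, the canonical factor map $p\colon X(\widehat x)\to X$ onto the odometer is a measure-theoretic isomorphism (the regular Toeplitz system $(S,X(\widehat x))$ is measure-theoretically isomorphic to the odometer, cf.\ Figure~\ref{fig1}), and since $\va=\psi\circ p$ (cf.\ \cref{uw16} and the discussion following Figure~\ref{fig1}) the map $(z,g)\mapsto(p(z),g)$ intertwines $S_\va$ with $T_\psi$ and commutes with every $\tau_h$. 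Composing, we obtain a measure-theoretic isomorphism $(S,X(\theta))\to(T_\psi,X\times G)$ carrying $S$ to $T_\psi$ and each $\widetilde{R_h}$ to $\tau_h$.

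Combining the last two displays yields $C(T_\psi)=\{T_\psi^i\circ\tau_h : i\in\Z,\, h\in G\}$, which is precisely the statement that $T_\psi$ has $G$-trivial centralizer. The argument is in essence bookkeeping along Figure~\ref{fig1}; the step requiring genuine care is the second one, namely that the hat operation intertwines $\widetilde{R_h}$ with the fibre rotation $\tau_h$ — this is exactly where it matters that $C(\theta)$ is the group of right, and not left, translations (a left translation $L_b$ would conjugate $\widehat y$ by $b$ rather than rotate the $G$-coordinate). The only input beyond \cref{leme} is the standard fact that a regular Toeplitz extension of an odometer is measure-theoretically isomorphic to that odometer, which is what allows the conjugacy to be pushed from the symbolic model down to $T_\psi$.
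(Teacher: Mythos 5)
Your proof is correct and follows essentially the same route as the paper: the paper obtains the corollary directly from \cref{leme} together with the observation (in the paragraph preceding the statement) that for a group substitution the $\sigma_j$ are left translations, hence $C(\theta)$ is the group of right translations. The only difference is that you make explicit the bookkeeping the paper leaves implicit, namely that under the isomorphisms of Figure~\ref{fig1} the map $S^i\circ\widetilde{R_h}$ is carried to $T_\psi^i\circ\tau_h$ (your computation $\widehat{\widetilde{R_h}(y)}=\widehat{y}$ is exactly the point where right versus left translation matters), which is a worthwhile verification but not a different argument.
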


\begin{Remark}\label{nowauwaga}
The discrete part of the spectrum of the dynamical system $(S,X(\theta))$, where $\theta$ is a substitution, consists of the spectrum of the underlying odometer and a cyclic group determined by the \emph{height} $h$ of the substitution~\cite{MR0461470}. It follows that when the height is equal to 1, then the spectrum is continuous in the orthocomplement of the $L^2$-space of the underlying odometer. Otherwise, in this orthocomplement we have the cyclic group of eigenvalues generated by $e^{2\pi i /h}$.
\end{Remark}

We are now ready to show that Sarnak's conjecture holds for dynamical systems given by bijective substitutions.

\begin{Th}\label{main}
For each bijective substitution $\theta\colon A\to A^r$,
each function $F\in C(X(\theta))$, each bounded by~1, aperiodic multiplicative function $\lio\colon \N\to\C$ and each $y\in X(\theta)$, we have
\beq\label{tra}
\frac1N\sum_{n\leq N}F(S^ny)\lio(n)\to0\;\;\mbox{when}\;\;N\to\infty.\eeq
In particular, each topological dynamical system determined by a bijective substitution satisfies Sarnak's conjecture.
\end{Th}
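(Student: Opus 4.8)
The plan is to reduce \eqref{tra} to Sarnak's conjecture for a \emph{continuous} finite group extension of a regular Toeplitz system, and then to invoke \cref{fgeSarnak} (more precisely its variant \cref{fgeSarnak1}). By \cref{lgc1}, $(S,X(\theta))$ is a topological factor of its group cover $(S,X(\ov\theta))$, so by \cref{rmo} it suffices to prove \eqref{tra} for $(S,X(\ov\theta))$, where $\la$ is the length of $\theta$ and $G=\langle\sigma_0,\dots,\sigma_{\la-1}\rangle$. By \cref{prz1}, $\ov\theta$ is the generalized Morse sequence $x=B\times B\times\dots$ over $G$ with $B=(\sigma_0,\dots,\sigma_{\la-1})$, and by \cref{lmo2} the system $(S,X(\ov\theta))=(S,X(x))$ is topologically isomorphic to the continuous $G$-extension $\ov T_{\ov\psi}:=(S_\va,X(\widehat x)\times G)$, where $\ov T:=(S,X(\widehat x))$ is, by \cref{uw16}, a Toeplitz system lying over the $\la$-adic odometer $(T,X)$ via a factor map $\pi$, where $\psi\colon X\to G$ is the Morse cocycle of \cref{uw16}, and where $\ov\psi=\psi\circ\pi=\va$ is the continuous cocycle $z\mapsto z[0]$. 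Thus it is enough to prove Sarnak's conjecture for $\ov T_{\ov\psi}$, at every point.

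I then verify the hypotheses of \cref{fgeSarnak}/\cref{fgeSarnak1} for $\ov T_{\ov\psi}$. Since $\widehat x$ is a \emph{regular} Toeplitz sequence (remark after \cref{lmo1}), $\ov T$ is uniquely ergodic, is a continuous extension of the odometer $T$, and is measure-theoretically isomorphic to $T$; its (rational, discrete) spectrum involves only the primes dividing $\la$. The cocycle $\psi$ has values in the finite group $G$ and $\ov\psi=\va$ is continuous, so \eqref{uciagl} holds. By \cref{nowauwaga}, the discrete part of the spectrum of $T_\psi\cong(S,X(\ov\theta))$ is that of $T$ together with at most a finite cyclic group (the height of $\ov\theta$); so $T_\psi$ has continuous spectrum on the orthocomplement of $L^2(X,\cb,m_X)\ot\raz_G$ up to finitely many rational eigenvalues, and its full discrete spectrum is still determined by finitely many primes. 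Finally, by \cref{lmo3a} every nontrivial normal natural factor $T_{\psi H}$ ($H\triangleleft G$, $H\neq G$) is again the system of a group substitution, this time over $G/H$, so by \cref{lemegr} its centralizer is $G/H$-trivial; for $H=\{e\}$ this says $C(T_\psi)$ is $G$-trivial. Hence \cref{fgeSarnak1} applies --- the finitely many excluded primes being those dividing $\la$, the relevant heights, and $|G|$.

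It remains to upgrade the conclusion of \cref{fgeSarnak}, which concerns functions of the form $(f\circ\pi)\ot j$, to all of $C(X(\widehat x)\times G)$. Following the proof of \cref{fgeSarnak} (through \cref{mkm}, \cref{zastmkm}, \cref{dl1}, \cref{narz}), for all sufficiently large distinct primes $r,s$ the point $((\ov x,g),(\ov x,g))$ is generic for the measure $\widetilde{\ov\rho}$ of \cref{narz}, which is relatively independent over a graph joining of $T^r$ and $T^s$. By Stone--Weierstrass it is enough to handle $F=h\ot j$ with $h\in C(X(\widehat x))$, $j\in C(G)$; writing $j=\int_G j\,dm_G+j_0$ with $j_0$ of zero mean, it is enough to handle $h\ot j_0$ and $h\ot\raz_G$ (the latter being functions of $z\in X(\widehat x)$ only). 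For $h\ot j_0$: by the genericity above, for each such $r,s$ the averages $\frac1N\sum_{n\leq N}(h\ot j_0)(\ov T_{\ov\psi}^{rn}z)\,\ov{(h\ot j_0)(\ov T_{\ov\psi}^{sn}z)}$ (with $z=(\ov x,g)$) converge to $\int (h\ot j_0)\ot\ov{(h\ot j_0)}\,d\widetilde{\ov\rho}$, which vanishes: $\widetilde{\ov\rho}$ is relatively independent over the rational-spectrum base $T$, and the conditional expectation of $h\ot j_0$ onto the odometer factor is a multiple of $\int_G j_0\,dm_G=0$. Since $h\ot j_0$ has zero mean, \cref{kbsz} gives \eqref{sarnakcon}. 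For $h\ot\raz_G$ it suffices to prove Sarnak's conjecture for the Toeplitz factor $(S,X(\widehat x))$ itself; this holds at every point because $\widehat x$ is regular (it is an instance of Möbius disjointness for a uniquely ergodic topological model of a discrete-spectrum system): for each $t$, off a clopen set $E_t\subset X$ with $m_X(E_t)\to0$ the orbit values of a given cylinder are a function of the odometer phase modulo a power of $\la$, so --- the base point being generic for $m_X$ --- the average of $F$ against $\lio$ splits into a periodic part, killed by the aperiodicity of $\lio$, and an error of size $O(\|F\|_\infty m_X(E_t))\to 0$. Since $(S,X(\theta))$ is a topological factor of $(S,X(\ov\theta))\cong\ov T_{\ov\psi}$ and every $y\in X(\theta)$ is the image of a point of $X(\ov\theta)$, this establishes \eqref{tra}.

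The conceptual heart, and the main obstacle, is the joinings step concealed in \cref{fgeSarnak}: for large distinct primes $r,s$, the only ergodic joinings between $(T_\psi)^r$ and $(T_\psi)^s$ are forced to be the relatively independent extensions of graph isomorphisms between $T^r$ and $T^s$. Establishing this combines \cref{mkm} (reducing ergodic joinings that project on a graph self-joining of the rational-spectrum base to relatively independent extensions of isomorphisms between normal natural factors), the $G/H$-triviality of the centralizers coming from \cref{lemegr} and \cref{lmo3a} (which, via an $r$-th root argument, excludes any isomorphism between nontrivial normal natural factors of $(T_\psi)^r$ and $(T_\psi)^s$), and the rational discreteness of the spectrum. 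Two subsidiary technical points: the Morse cocycle $\psi$ is in general discontinuous, which is why one works over the Toeplitz model $X(\widehat x)$, on which it becomes the continuous $\va$ (\cref{lmo2}); and $\ov\theta$ may have nontrivial height, which is why \cref{fgeSarnak1} is used rather than \cref{fgeSarnak}, with its hypothesis read in the mild form ``discrete spectrum determined by finitely many primes''.
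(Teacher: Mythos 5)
Your proposal is correct and follows essentially the same route as the paper's proof: reduction to the group cover substitution, passage to the topologically isomorphic continuous $G$-extension $(S_\va,X(\widehat{x})\times G)$ over the regular Toeplitz system, and verification of the hypotheses of \cref{fgeSarnak}/\cref{fgeSarnak1} via \cref{lmo3a}, \cref{lemegr} and \cref{nowauwaga}. Your closing step (splitting $j$ into its mean and a zero-mean part, and disposing of the Toeplitz factor separately) merely spells out what the paper compresses into the phrase ``linearly dense set of functions'' and its subsequent remark.
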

\begin{proof}
It follows by \cref{lgc1} that it is enough to prove \eqref{tra} for the dynamical system $(S,X(\ov{\theta}))$ corresponding to the group cover substitution $\ov{\theta}$ of $\theta$. Moreover, in view of \cref{lmo2}, we can study instead its topologically isomorphic model $(S_{\va},X(\widehat{x})\times G)$.

Fix $f\ot j$ with $f\in C(X(\widehat{x}))$, where $j\in C(G)$, $\int j\,dm_G=0$. In view of \cref{kbsz}, \cref{lmo3a}, \cref{lemegr} and \cref{fgeSarnak}, \cref{fgeSarnak1} and \cref{nowauwaga}, for each $(y,g)\in X(\widehat{x})\times G$, we have
\begin{equation}\label{jeden}
\frac1N\sum_{n\leq N}(f\ot j)((S_{\va})^n(y,g))\lio(n)\to0
\end{equation}
for each multiplicative function $\lio$, $|\lio|\leq1$. If we now fix $\lio$,  then we have the relevant convergence (against this fixed $\lio$) for a linearly dense set of functions in $C(X(\widehat{x})\times G)$, hence for all functions in $C(X(\widehat{x})\times G)$ and the result follows.
\end{proof}

\begin{Remark}
Let  $j:=\raz_G$. Using \eqref{jeden}, for each $y\in X(\widehat{x})$, we have
$$
\frac1N\sum_{n\leq N}f(S^ny)\lio(n)\to0
$$
for each bounded by~1, aperiodic multiplicative function $\lio$. This can be also proved more directly. Notice that for each odometer $(T,X)$ we have~\eqref{sarnakcon} true with $\mob$ replaced by  $\lio$ since each finite system enjoys this property and $(T,X)$ is a topological inverse limit of such systems. If $(\ov{T},\ov{X})$ is a uniquely ergodic topological extension of $(T,X)$, measure-theoretically isomorphic to $(T,X,m_X)$, we can apply Lemma~7 and Proposition~3 in~\cite{Abdalaoui:2013rm} to lift the orthogonality condition~\eqref{tra} from the odometer to $(\ov{T},\ov{X})$.
\end{Remark}

\begin{Remark}
The proof of Sarnak's conjecture also gives the following: whenever $(S,X(\theta))$ is a subshift given by a bijective substitution, for each ergodic powers $S^r$ and $S^s$, each point $(y,z)\in X(\theta)\times X(\theta)$ is generic (for an ergodic measure).
\end{Remark}

\begin{Remark}\label{rem57}
In part 5 of \cite{MR3287361}, Drmota explains how the method developed by Mauduit and Rivat in the proof of Theorem~1 of \cite{MaRi2013} can be applied to any bijective substitution (Definition 4.1 in \cite{MR3287361} of ``invertible $\lambda$-automatic sequence'' corresponds to our bijective substitution of constant length $\lambda$). In particular, Theorem 5.5 of \cite{MR3287361} says that any bijective substitution $\theta$ on the alphabet $A$ satisfies a Prime Number Theorem, i.e.\ that for any $a\in A$, $\lim_{N\to\infty}\frac1{\pi(N)}|\{1\leq p<N: p\text{ is prime, }x_\theta[p]=a\}|$ exists (here $x_\theta$ stands for a fixed point given by $\theta$). In \cite{MaRi2013}, the proof of Theorem~2 can be deduced from the proof of Theorem~1 just by replacing the classical Vaughan identity by the similar result for the M\"obius function (see (13.39) and (13.40) in \cite{MR2061214}). It follows from this remark that the arguments given by Drmota in \cite{MR3287361} in order to generalize Theorem~1 from \cite{MaRi2013} to any bijective substitution is still valid to generalize Theorem~2 from \cite{MaRi2013} to any bijective substitution. This means that if $\theta$ is a bijective substitution and $J\colon A\to\C$ then
$$
\frac1N\sum_{n\leq N}J(x_\theta[n])\mob(n)\to 0.$$
\end{Remark}

\subsection{Regular Morse sequences and the Rudin-Shapiro case}\label{se:Ru-Sha}

In \cite{MR826355}, it has been proved that the centralizer of the dynamical systems given by so called \emph{regular Morse sequences} \cite{MR665892}  $x=b^0\times b^1\times\ldots$ ($b^t\in\{0,1\}^{\la_t}$, $t\geq0$) is $\Z/2\Z$-trivial.

\begin{Cor} If $x=b^0\times b^1\times\ldots$ is a regular Morse sequence for which
the set $\{p : p\text{ is prime and }p|\lambda_t \text{ for some }t\}$ is finite. Then~\eqref{tra} holds in the dynamical system given by $x$.
\end{Cor}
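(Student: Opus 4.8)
The plan is to replay the proof of \cref{main} almost verbatim, with the group-theoretic ingredients (\cref{lgc1}, \cref{prz1}, \cref{lmo3a}, \cref{lemegr}) replaced by the $\Z/2\Z$-triviality of the centralizer of regular Morse systems recalled above from \cite{MR826355}. First I would pass, via \cref{lmo1} and \cref{lmo2}, to the topologically isomorphic model $(S_{\va},X(\widehat{x})\times\Z/2\Z)$, where $\widehat{x}$ is the associated (regular) Toeplitz sequence, $(S,X(\widehat{x}))$ is uniquely ergodic and measure-theoretically isomorphic to the $\{n_t\}$-odometer $(T,X)$ with $n_t=|b^0\times\dots\times b^{t-1}|$, and $\va=\psi\circ p$ for the Morse cocycle $\psi\colon X\to\Z/2\Z$ of \cref{uw16} and the factor map $p\colon X(\widehat{x})\to X$. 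Every $F\in C(X(\widehat{x})\times\Z/2\Z)$ splits as $F=f\ot\raz+f'\ot j$ with $f,f'\in C(X(\widehat{x}))$ and $j$ the nontrivial character of $\Z/2\Z$; for the summand $f\ot\raz$, the convergence in \eqref{sarnakcon} for $S_{\va}$ is just \eqref{sarnakcon} for the uniquely ergodic topological extension $(S,X(\widehat{x}))$ of the odometer, which holds with $\mob$ replaced by any bounded aperiodic multiplicative $\lio$ by the argument in the Remark following \cref{main}: odometers are inverse limits of finite systems, and one lifts the orthogonality along the measure-theoretic isomorphism using \cite{Abdalaoui:2013rm}. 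So it remains to handle functions $f\ot j$, $f\in C(X(\widehat{x}))$.

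For these I would invoke \cref{lrs2} (in the form tolerating finitely many rational eigenvalues in the orthocomplement, cf.\ \cref{fgeSarnak1}) applied to $\ov{T}=(S,X(\widehat{x}))$, $T$ the $\{n_t\}$-odometer, $\ov{\psi}=\va$, $\psi$ the Morse cocycle. Its hypotheses hold: $\ov{T}$ is a uniquely ergodic continuous extension of $T$ measure-theoretically isomorphic to it; the rational discrete spectrum of $T$ is generated by the primes dividing the numbers $n_t=\la_0\cdots\la_{t-1}$, a finite set by assumption; \eqref{uciagl} is exactly \cref{lmo2}; $T_\psi$ has continuous spectrum in the orthocomplement of $L^2(X,\cb,m_X)\ot\raz_{\Z/2\Z}$, up to at most finitely many rational eigenvalues coming from the height (harmless by \cref{fgeSarnak1}), by the spectral theory of regular Morse sequences; and, since for every prime $r$ coprime to the finitely many primes dividing the $\la_t$ the power $T^r$ is ergodic, \cref{era3} --- whose remaining hypotheses, $\Z/2\Z$-triviality of $C(T_\psi)$ and continuity of the complement spectrum, are the theorem of \cite{MR826355} and the spectral input just mentioned --- shows that $(T_\psi)^r$ and $(T_\psi)^s$ are not isomorphic for all sufficiently large primes $r\neq s$. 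As in the proof of \cref{main}, once $(T_\psi)^r\not\simeq(T_\psi)^s$ is known, \cref{lrs1} forces the only ergodic joinings of $(T_\psi)^r$ and $(T_\psi)^s$ to be relatively independent extensions over isomorphism graphs of $T^r$ and $T^s$; then, by \cref{narz} (via \cref{dl1} and \eqref{genodo} for $T^r\times T^s$), for every $(y,g)$ the point $((y,g),(y,g))$ is generic in $(S_{\va})^r\times(S_{\va})^s$ for the relatively independent extension $\widetilde{\ov\rho}$ of an odometer graph joining, and $\int (f\ot j)\,\ov{(f\ot j)}\,d\widetilde{\ov\rho}=0$ since $f\ot j$ lies in a non-trivial $\Z/2\Z$-isotypic subspace, whose conditional expectation on the odometer factor vanishes (this is also the point where one gets the statement for arbitrary $f\in C(X(\widehat{x}))$, not just $f\circ p$). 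By \cref{kbsz}, \eqref{sarnakcon} with $\lio$ for $\mob$ holds for $S_{\va}$ and $f\ot j$ at every point.

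Combining the two cases, for each fixed bounded aperiodic multiplicative $\lio$ the convergence in \eqref{tra} holds for $S_{\va}$ at every point on a linearly dense set of continuous functions, hence on all of $C(X(\widehat{x})\times\Z/2\Z)$; transporting back through the topological isomorphism of \cref{lmo2} gives \eqref{tra} for $(S,X(x))$. The one external input beyond the hypothesis on the prime divisors of the $\la_t$ and the centralizer result \cite{MR826355} --- and the part I expect to be the real content --- is the spectral statement that a regular Morse sequence over $\Z/2\Z$ produces a cocycle $\psi$ with $T_\psi$ having continuous spectrum in the orthocomplement of $L^2$ of the odometer (up to finitely many rational eigenvalues); this, fed into \cref{era3} together with \cite{MR826355}, is what secures the non-isomorphism of distinct large prime powers, and everything else is a routine verification of the hypotheses of \cref{lrs2}, \cref{era3} and \cref{narz}.
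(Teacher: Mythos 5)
Your proposal is correct and follows essentially the same route as the paper, whose entire proof of this corollary is the one-line citation ``the result follows from Corollaries~\ref{era3} and~\ref{lrs2} and the proof of \cref{main}.'' You have simply unpacked that citation in full --- the passage to the continuous model $(S_{\va},X(\widehat{x})\times\Z/2\Z)$, the splitting along characters of $\Z/2\Z$, the use of the $\Z/2\Z$-triviality of the centralizer from \cite{MR826355} together with the spectral input to get non-isomorphism of large prime powers via \cref{era3}, and the conclusion via \cref{lrs2} and the argument of \cref{main} --- which is exactly what the authors intend.
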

\begin{proof}
The result follows
from Corollaries~\ref{era3} and~\ref{lrs2} and the proof of \cref{main}.
\end{proof}

In \cite{MR937955}, the Rudin-Shapiro type sequences are
considered. These are {0-1}-sequences $x\in\{0,1\}^{\N}$ such that
$x[n]$ is equal to the mod~2  frequency of the block $1\ast\ldots\ast1$ (with fixed number of $\ast$) in the block given by the binary expansion on $n$.\footnote{We can also consider $1\ast\ldots\ast0$.} As shown in \cite{MR937955}, the corresponding subshift is given by a Toeplitz type $\Z/2\Z$-extension of the dyadic odometer, and the whole method applies.

\begin{Cor}\label{rusha}
If $x$ is a Rudin-Shapiro type sequence then~\eqref{tra} holds in
$(S,X(x))$. In particular, Sarnak's conjecture holds in the dynamical system given by the classical Rudin-Shapiro sequence.\end{Cor}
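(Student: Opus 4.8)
The plan is to treat the Rudin-Shapiro type systems exactly as the group cover bijective substitutions were treated for \cref{main}, replacing the substitution centralizer input by the analogous fact for these systems. First, following Section~III.2 of \cite{MR937955} (and the method of Toeplitz extensions recalled around \cref{exampl}), I would realise the subshift $(S,X(x))$ associated with a Rudin-Shapiro type sequence as a $\Z/2\Z$-extension $T_\psi$ of the dyadic odometer $(T,X)$ (so all $\la_t=2$, and the rational discrete spectrum of $T$ is generated by the single prime $2$) by a Toeplitz cocycle $\psi$, and then pass — via the Toeplitz sequence $\widehat x$ as in \cref{lmo1} and \cref{lmo2} — to a topologically isomorphic model $\ov T_{\ov\psi}$ on $X(\widehat x)\times\Z/2\Z$. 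Here $\ov X=X(\widehat x)$ is a regular Toeplitz subshift, hence uniquely ergodic, an almost 1-1 extension of $T$ and measure-theoretically isomorphic to it, $\ov\psi=\va$ is continuous so \eqref{uciagl} holds, and all the structural hypotheses of \cref{lrs2} concerning the base are satisfied.

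Second, I would verify the spectral hypothesis: on the orthocomplement of $L^2(X,\cb,m_X)\ot\raz_{\Z/2\Z}$ the Koopman operator of $T_\psi$ has Lebesgue spectrum (of multiplicity two) — this is precisely the content of \cite{MR937955} for the classical Rudin-Shapiro sequence, and the same computation covers the Rudin-Shapiro type sequences considered there. In particular $T_\psi$ has continuous spectrum in that orthocomplement, so \cref{pcent} applies and shows that $(T_\psi)^r$ is ergodic for every odd prime $r$ (indeed $\gcd(2,r)=1$ and $T^r$ is ergodic for odd $r$).

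The remaining, and I expect the only genuinely delicate, ingredient is that $C(T_\psi)$ is $\Z/2\Z$-trivial, i.e.\ that the Rudin-Shapiro cocycle $\psi$ is not cohomologous to any translate $\psi\circ S$ with $S\in C(T)$ not a power of $T$. This is the counterpart, for Rudin-Shapiro type systems, of the centralizer theorems \cite{MR932072} and \cite{MR826355} used for bijective substitutions and for regular Morse sequences; it is available from the analysis of these systems in the literature, and can in any case be read off from \cref{centrm}: since $T$ has discrete spectrum, every $\widetilde S\in C(T_\psi)$ lifts some $S\in C(T)$, and — $G=\Z/2\Z$ having no proper nontrivial subgroup, so that $T_\psi$ has no nontrivial proper normal natural factor — the associated cohomological equation forces $S$ to be a power of $T$. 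Granting this, \cref{era3} yields $(T_\psi)^r\not\simeq(T_\psi)^s$ for any two distinct odd primes $r,s$, and then \cref{lrs2} gives \eqref{sarnakcon} for $\ov T_{\ov\psi}$ against $h\ot j$ for every $h\in C(X(\widehat x))$ and $j$ the nontrivial character of $\Z/2\Z$; here one uses, as in the proof of \cref{main}, that the limiting integral computed in \cref{fgeSarnak} vanishes merely because $\int j\,dm_G=0$, irrespective of $h$.

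Finally I would conclude as in the proof of \cref{main}: for the trivial character the needed cancellation is \eqref{tra} for $(S,X(\widehat x))$, a uniquely ergodic topological extension of the dyadic odometer measure-theoretically isomorphic to it, which follows by lifting the property from the odometer — itself a topological inverse limit of finite systems — as in the remark after \cref{main} via \cite{Abdalaoui:2013rm}; the functions $h\ot j$ with $h\in C(X(\widehat x))$ and $j\in\widehat{\Z/2\Z}$ are linearly dense in $C(X(\widehat x)\times\Z/2\Z)\cong C(X(x))$; and, fixing the aperiodic multiplicative function $\lio$ first and invoking \cref{kbsz}, one passes from this dense family to all $F\in C(X(x))$, obtaining \eqref{tra} in $(S,X(x))$ at every point. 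Since the classical Rudin-Shapiro sequence is of Rudin-Shapiro type, Sarnak's conjecture for the associated dynamical system follows. The main obstacle is thus the $\Z/2\Z$-triviality of $C(T_\psi)$; everything else is an application of the machinery of \cref{se4} together with the classical Lebesgue-spectrum property of the Rudin-Shapiro system.
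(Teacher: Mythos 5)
Your overall framework---realizing $(S,X(x))$ as a continuous $\Z/2\Z$-extension of the dyadic odometer via the Toeplitz model and feeding it into \cref{lrs2} and \cref{kbsz}---matches the paper, but the step you yourself flag as the delicate one, the $\Z/2\Z$-triviality of $C(T_\psi)$, is a genuine gap, and the argument you sketch for it does not work. \cref{centrm} only tells you that every element of $C(T_\psi)$ is a lift of some $S\in C(T)$; it says nothing about \emph{which} $S\in C(T)$ admit lifts, and the absence of proper nontrivial subgroups of $\Z/2\Z$ is irrelevant to that question. The obstruction is the cohomological equation $\psi\circ S-\psi=\xi\circ T-\xi$, which for $\Z/2\Z$-extensions of odometers can perfectly well have solutions with $S$ outside the powers of $T$: as \cref{UWAGA:2} recalls, there are Kakutani systems (also $\Z/2\Z$-extensions of odometers) whose centralizer is uncountable because the system is rigid. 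So $G$-triviality of the centralizer is a genuinely system-specific fact requiring input about the Rudin--Shapiro cocycle that you have not supplied, and it cannot be ``read off'' from the general structure theory.

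The paper sidesteps this entirely. The hypothesis of \cref{lrs2} is only that $(T_\psi)^r\not\simeq(T_\psi)^s$ for large distinct primes $r,s$, and this is obtained directly from a spectral invariant: $(S,X(x))$ has a Lebesgue component of multiplicity $2^k$ in the orthocomplement of the eigenfunctions \cite{MR937955}, hence $(T_\psi)^r$ and $(T_\psi)^s$ have Lebesgue components of multiplicities $r2^k$ and $s2^k$ respectively, which differ whenever $r\neq s$. This one-line multiplicity count replaces both \cref{era3} and any centralizer computation. The remainder of your assembly (continuity of the cocycle, unique ergodicity of the Toeplitz base, the density argument and the final appeal to \cref{kbsz}) is consistent with the paper; to repair your proof you should either substitute the Lebesgue-multiplicity argument for the centralizer step or provide an actual proof (or precise reference) that $C(T_\psi)$ is $\Z/2\Z$-trivial for Rudin--Shapiro type cocycles.
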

\begin{proof} Since $(S,X(x))$ has the
Lebesgue component of multiplicity  $2^k$ in the spectrum in the orthocomplement of the space generated by eigenfunctions~\cite{MR937955}, it follows that its $s$th and $r$th power also have Lebesgue components in the spectrum, of multiplicity $s2^k$ and $r2^k$, respectively. Thus, these powers cannot be isomorphic, unless $s=r$. The result follows from \cref{lrs2}.
\end{proof}

\section{Spectral approach and other methods}\label{se6}

Let $(S,X)$ with $X\subset A^\Z$ be a subshift over a finite alphabet $A$ with $|A|=r\geq 2$.

\subsection{First remarks}
\begin{Lemma}\label{lm:1A}
Suppose that~\eqref{sarnakcon} holds for arbitrary $x\in X$, for each function $f=\raz_B$, where $B\in A^k$ is a block of finite length ($k\geq 1$ is arbitrary) that appears on $X$. Then Sarnak's conjecture holds for $(S,X)$.
\end{Lemma}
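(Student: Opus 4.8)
The plan is to reduce the convergence statement from an arbitrary continuous function $f \in C(X)$ to the indicator functions $\raz_B$ of finite blocks appearing on $X$, and then invoke the hypothesis together with a density/linearity argument against a fixed multiplicative (in particular, M\"obius) function.

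First I would recall the topology on the subshift $X \subset A^\Z$: a basis of clopen sets is given by cylinders $[B]_j := \{y \in X : y[j,\dots,j+k-1] = B\}$ for blocks $B \in A^k$ appearing on $X$ and $j \in \Z$. The indicator $\raz_{[B]_j}$ of such a cylinder is continuous, and since $\raz_{[B]_j} = \raz_{[B]_0} \circ S^j$, the hypothesis that \eqref{sarnakcon} holds for all $\raz_B$ (i.e.\ for all $f = \raz_{[B]_0}$) immediately upgrades to all cylinder indicators $\raz_{[B]_j}$: replacing $x$ by $S^j x$ (or $S^{-j}x$) in the Ces\`aro average, $\frac1N\sum_{n\leq N}\raz_{[B]_j}(S^n x)\mob(n) = \frac1N\sum_{n\leq N}\raz_{[B]_0}(S^{n}(S^j x))\mob(n) \to 0$, since the hypothesis is assumed for \emph{arbitrary} points of $X$. (A small care is needed if one prefers $\mob(n+j)$-type shifts, but using the shift on the \emph{point} rather than on $n$ avoids that.) So \eqref{sarnakcon} holds for every $f$ in the algebra of finite linear combinations (over $\C$) of cylinder indicators.

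Next I would use that this algebra is dense in $C(X)$ in the uniform norm: $X$ is a compact metric space, the cylinder indicators separate points and contain the constants, so the $\C$-algebra they generate — which is exactly the span of cylinder indicators, since a product of cylinders is again a (finite union of) cylinder(s), hence a $\{0,1\}$-combination — is uniformly dense by Stone–Weierstrass. Given $f \in C(X)$ and $\varepsilon > 0$, pick $g$ in this algebra with $\|f-g\|_\infty < \varepsilon$. Then for every $x \in X$ and every $N$,
\[
\Bigl|\frac1N\sum_{n\leq N} f(S^n x)\mob(n)\Bigr| \leq \Bigl|\frac1N\sum_{n\leq N} g(S^n x)\mob(n)\Bigr| + \frac1N\sum_{n\leq N}|f(S^n x) - g(S^n x)||\mob(n)| \leq \Bigl|\frac1N\sum_{n\leq N} g(S^n x)\mob(n)\Bigr| + \varepsilon .
\]
Letting $N\to\infty$, the first term vanishes by the previous paragraph (linearity in $g$), so $\limsup_N |\frac1N\sum_{n\leq N} f(S^n x)\mob(n)| \leq \varepsilon$; since $\varepsilon$ was arbitrary, the limit is $0$. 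This holds at every $x \in X$, which is Sarnak's conjecture for $(S,X)$.

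I do not expect a genuine obstacle here — the lemma is an elementary ``reduction to a dense class'' argument. The only point requiring a little attention is the uniformity of the $\varepsilon$-bound: the approximation $\|f-g\|_\infty < \varepsilon$ must be in the sup norm over all of $X$ (so that the error term is controlled simultaneously for every orbit), which is exactly what Stone–Weierstrass on the compact space $X$ provides, and the fact that the bound $|\mob(n)|\leq 1$ makes the Ces\`aro average of the error at most $\varepsilon$ independently of $x$ and $N$. One should also note that the conclusion is stated for the M\"obius function $\mob$, but the identical argument yields \eqref{sarnakcon} with $\mob$ replaced by any bounded $\lio$, should that be needed elsewhere; here I would just state it for $\mob$ to match the wording of Sarnak's conjecture.
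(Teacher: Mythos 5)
Your proof is correct and follows essentially the same route as the paper's: reduce Sarnak's conjecture to a linearly dense family of continuous functions, namely finite linear combinations of cylinder indicators, and conclude via the hypothesis together with the bound $|\mob|\leq 1$ in the approximation step. The only cosmetic difference is that you invoke Stone--Weierstrass for density where the paper simply observes that the block indicators form a linear basis of the functions depending on finitely many coordinates, which are automatically uniformly dense in $C(X)$.
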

\begin{proof}
It suffices to show~\eqref{sarnakcon} for a linearly dense family of functions in $C(X)$: e.g.\ functions which depend on a finite number of coordinates. The space of (continuous) functions depending on coordinates $[-k,k]$ in the full shift has dimension $r^{2k+1}$, which is at the same time the number of possible blocks of length $2k+1$. In a similar way, for a subshift, we just need to count the number of distinct $(2k+1)$-blocks appearing on $X$. Moreover, the family of their characteristic functions is linearly independent.
\end{proof}

\begin{Remark}\label{rss1}
There are other choices of finite families of functions than those in \cref{lm:1A} which also yield the validity of Sarnak's conjecture. For example, when $r=2$ we can use the so called Walsh basis: for each $K\geq1$, we consider
the characters of the group $\{0,1\}^{2K+1}$: $f_C(x)=(-1)^{\sum_{i\in C}x[i]}$ for $C\subset\{-K,\dots,K\}$ and $x\in X\subset A^\Z$.
\end{Remark}
\begin{Remark}\label{UWAGA:2}
In  \cite{MR3043150,MR2981162} the convergence in~\eqref{sarnakcon} is proved at any point for $f(y)=(-1)^{y[0]}$ ($f=f_{\{0\}}$ in the notation from \cref{rss1}) for Kakutani sequences.\footnote{The uniformity of estimates in these papers yields indeed~\eqref{sarnakcon} at any point $y\in X(x))$.}  A natural question arises whether this is sufficient to obtain Sarnak's conjecture for the corresponding dynamical system. In general, it does not seem to be automatic that~\eqref{sarnakcon} for functions depending on one coordinate implies~\eqref{sarnakcon} for functions depending on more coordinates. E.g., in~\cite{Abdalaoui:2013rm}, where Sarnak's conjecture is proved for the $0$-$1$-subshift generated by the Thue-Morse sequence, \eqref{sarnakcon} for $f(y)=(-1)^{y[0]}$ is proved by completely different methods than for continuous functions invariant under the map $y\mapsto \widetilde{y}$, where $\widetilde{y}[n]=1-y[n]$.\footnote{E.g.\ $g(y)=(-1)^{y[0]+y[1]}$ is invariant under this map; notice that $g=f_{\{0,1\}}$ from \cref{rss1}.} We note that the method from \cref{lrs1} does not apply to Kakutani systems since their centralizer can be uncountable: there are Kakutani sequences for which the corresponding dynamical systems are rigid~\cite{MR795787}. However, in \cref{spektralna} we provide an argument which in \cref{aplikacje} will be used to show that~\cite{MR3043150,MR2981162} yield Sarnak's conjecture for the dynamical systems given by Kakutani sequences.
\end{Remark}

\begin{Remark}
We have already shown that Sarnak's conjecture holds for the dynamical system given by the Rudin-Shapiro sequence, see~\cref{rusha}. Recall also that in this case~\eqref{sarnakcon} was shown earlier in~\cite{MaRi2013} for $f(y)=(-1)^{y[0]}$ (at any point). Here the situation is more delicate if we want to apply the method from \cref{spektralna}: we need more functions, see \cref{aplikacje} for more details.
\end{Remark}

\subsection{Spectral approach}\label{spektralna}
\begin{Lemma}\label{lmrs17} Assume that $T$ is a uniquely ergodic homeomorphism of a compact metric space $X$. Denote the unique $T$-invariant measure by $\mu$. Assume that the unitary operator $U_T\colon L^2\xbm\to L^2\xbm$, $U_Tg:=g\circ T$, has simple spectrum. Assume that the  maximal spectral type of $U_T$ is realized by $F\in C(X)$.  If  $F$ satisfies~\eqref{sarnakcon} at each
point $x\in X$ then $(T,X)$ satisfies Sarnak's conjecture.
\end{Lemma}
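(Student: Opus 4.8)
The goal is to upgrade the Mbius disjointness condition from the single function $F$ (which realizes the maximal spectral type) to all of $C(X)$. The natural strategy is: first verify the hypothesis of the Katai-Bourgain-Sarnak-Ziegler criterion (\cref{kbsz}) for the sequence $a_n=F(T^nx)$ at each $x\in X$, using the simplicity of the spectrum; then bootstrap from $F$ to a linearly dense subset of $C(X)$ via the cyclic-space decomposition $L^2\xbm=\Z(F)$ (simple spectrum means one summand) and a density/approximation argument using unique ergodicity.

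\medskip

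First I would apply \cref{kbsz} in reverse. Since $F$ satisfies \eqref{sarnakcon} at every $x$, and more is needed, I instead argue directly on the generic-point level. By unique ergodicity, every $x\in X$ is generic for $\mu$; by compactness, every pair $(x,x)\in X\times X$ is quasi-generic along some subsequence for a $T\times T$-invariant measure $\rho$ projecting onto $\mu$ on both coordinates, i.e.\ $\rho$ is a self-joining of $(T,X,\mu)$. Now I would use simplicity of the spectrum of $U_T$: a classical fact (going back to the spectral characterization of joinings, cf.\ the theory behind \cref{mkm}) is that an ergodic transformation with simple spectrum has no nontrivial off-diagonal joining behavior that is visible to the maximal-spectral-type function; more precisely, the point is that the correlations $\frac1N\sum_{n\le N}F(T^{rn}x)\ov{F(T^{sn}x)}$ can be controlled. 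The cleanest route: since $F$ realizes the maximal spectral type and $\sigma_F$ is the spectral measure, $U_T$ has simple spectrum iff $L^2\xbm=\Z(F)$, so every $g\in L^2\xbm$ is an $L^2$-limit of linear combinations $\sum c_j U_T^{n_j}F$. The family $\{U_T^nF:n\in\Z\}=\{F\circ T^n\}$ is linearly dense in $C(X)$ as well — here one needs that these are continuous (true, $F\in C(X)$) and that their linear span is dense in the sup norm, which follows because it is $U_T$-invariant, contains $F$, and by Stone-Weierstrass-type reasoning together with the fact that it is $L^2(\mu)$-dense and $\mu$ has full support (unique ergodicity of a homeomorphism on a compact space forces full support after passing to the support, which we may assume is all of $X$).

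\medskip

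Granting density of $\mathrm{span}\{F\circ T^n:n\in\Z\}$ in $C(X)$, the bootstrap is routine: \eqref{sarnakcon} for $F$ implies \eqref{sarnakcon} for each $F\circ T^n$ (shift the summation index, absorbing an $O(n/N)$ error and using $|\mob|\le1$), hence for every finite linear combination; then a standard $3\eps$-argument with $\|\cdot\|_\infty$-approximation and the uniform bound $\big|\frac1N\sum_{n\le N}h(T^nx)\mob(n)\big|\le\|h\|_\infty$ extends \eqref{sarnakcon} to all $h\in C(X)$, uniformly in $x$, which is exactly Sarnak's conjecture for $(T,X)$.

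\medskip

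The main obstacle I anticipate is the density step: showing that $\mathrm{span}\{F\circ T^n:n\in\Z\}$ is dense in $C(X)$ and not merely in $L^2(X,\mu)$. Simplicity of the spectrum gives only the $L^2$ statement, and in general $L^2$-density does not imply $C(X)$-density. The resolution should again invoke the KBSZ criterion rather than pure approximation: one applies \cref{kbsz} to $a_n=F(T^nx)$ to deduce that $F$ is disjoint from every aperiodic multiplicative $\lio$, but then — to reach arbitrary $f\in C(X)$ — one must instead verify the correlation hypothesis $\frac1N\sum_{n\le N}f(T^{rn}x)\ov{f(T^{sn}x)}\to0$ for $f$ directly. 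This is where simplicity of the spectrum does the real work: it should force that for distinct primes $r,s$ the systems $T^r$ and $T^s$ have only the "independent" joinings visible to $L^2(\mu)$, so that the limiting self-joining $\rho$ above is supported appropriately and the correlation vanishes whenever $\int f\,d\mu=0$ — reducing the general case to the constant case, which is trivial. I would structure the final write-up around this KBSZ/joinings argument, using the $F$-hypothesis only to certify that the relevant off-diagonal correlations can be computed from the (simple) spectral data.
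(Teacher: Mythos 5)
You correctly isolate the real difficulty — simple spectrum gives density of $\mathrm{span}\{F\circ T^n : n\in\Z\}$ only in $L^2\xbm$, not in $C(X)$ — but your proposed ways around it do not work, and you miss the short argument that does. The resolution is \emph{not} to upgrade $L^2$-density to sup-norm density (that is false in general), and not to fall back on \cref{kbsz}. The point you overlook is that the approximation error only ever enters through a Birkhoff average: if $G\in C(X)$ and $p$ is a trigonometric polynomial with $\|G-p(U_T)F\|_2<\vep$, then
\[
\Bigl|\tfrac1N\sum_{n\leq N}G(T^nx)\mob(n)\Bigr|
\leq \tfrac1N\sum_{n\leq N}\bigl|(G-p(U_T)F)(T^nx)\bigr|
+\Bigl|\tfrac1N\sum_{n\leq N}(p(U_T)F)(T^nx)\mob(n)\Bigr|,
\]
and since $|G-p(U_T)F|$ is itself a \emph{continuous} function, unique ergodicity makes the first term converge to $\|G-p(U_T)F\|_1\leq\|G-p(U_T)F\|_2<\vep$, while the second term is small by the hypothesis on $F$ (which, as you note, passes to each $p(U_T)F$ after an $O(K/N)$ index shift). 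So $L^2$-approximation suffices; this is exactly the paper's proof (following Lemma~7 of the el Abdalaoui--Kułaga-Przymus--Lemańczyk--de la Rue argument it cites).

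Your fallback — verifying the \cref{kbsz} correlation hypothesis for arbitrary zero-mean $f\in C(X)$ using simplicity of the spectrum — is both unnecessary and unsound. If it worked, the hypothesis on $F$ would be superfluous and the lemma would prove Sarnak's conjecture for every uniquely ergodic system with simple spectrum realized by a continuous function, with no input at all; that is far too strong. Concretely, for an odometer (simple, rational discrete spectrum, covered by the lemma) and $f$ an eigenfunction with eigenvalue $\la$ satisfying $\la^{r-s}=1$, the correlations $\frac1N\sum_{n\leq N}f(T^{rn}x)\ov{f(T^{sn}x)}$ do not tend to $0$, so the KBSZ hypothesis simply fails for some continuous $f$; simple spectrum does not force the joining rigidity you appeal to. Drop the entire KBSZ/joinings discussion and replace the ``$3\vep$-argument with $\|\cdot\|_\infty$-approximation'' by the $L^1$-control of the error term via unique ergodicity described above.
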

\begin{proof}
Observe first that if $F$ satisfies~\eqref{sarnakcon} at each point then the same is true for each function $p(U_T)F$  of $F$ (where $p(z)=\sum_{\ell=-K}^Ka_\ell z^\ell$ is a trigonometric polynomial). By the simplicity of the spectrum of $U_T$, the set of functions of the form $p(U_T)F$ is dense in $L^2\xbm$. We now repeat the argument from Lemma~7 in \cite{Abdalaoui:2013rm}. Fix $G\in C(X)$, $x\in X$ and $\vep>0$. Find a trigonometric polynomial $p$ so that $\|p(U_T)F-G\|_2<\vep$. Let $N_0$  be such that for $N\geq N_0$, $|\frac1N\sum_{n\leq N}p(U_T)F(T^nx)\mob(n)|<\vep$ for all $N>N_0$. Then (since $T$ is uniquely ergodic and $|\mob|\leq 1$)
\begin{align*}
&\left|\frac1N\sum_{n\leq N}G(T^nx)\mob(n)\right|\\
&\leq \frac1N\sum_{n\leq N}|(G-p(U_T)F)(T^nx)||\mob(n)|
+ \left|\frac1N\sum_{n\leq N}(p(U_T)F)(T^nx)\mob(n)\right|\\
&\leq\frac1N\sum_{n\leq N}|(G-p(U_T)F)(T^nx)|+\vep\to \|G-p(U_T)F\|_1+\vep
\end{align*}
when $N\to\infty$. Since $\|G-p(U_T)F\|_1\leq\|G-p(U_T)F\|_2<\vep$, the result follows.
\end{proof}

\begin{Remark} \label{uwfunkcja}\
\begin{enumerate}[(A)]
\item
The assertion of \cref{lmrs17} remains true if we take any bounded arithmetic function $\lio\colon\N\to\C$ instead of $\mob$ (both in~\eqref{sarnakcon} and in Sarnak's conjecture). The proof is the same.
\item
Fr\k{a}czek~\cite{MR1444806} showed that for each automorphism $T$ on $\xbm$, where $X$ is a compact metric space, the maximal spectral type of  $U_T$ is always realized by a continuous function. However, in order to prove Sarnak's conjecture using \cref{lmrs17}, we look for \emph{natural} continuous functions realizing the maximal spectral type for which we can show that~\eqref{sarnakcon} holds.

We would like to mention also an open problem raised by Thouvenot in the 1980th whether each ergodic dynamical system has $L^1$-simple spectrum, i.e., for some $f\in L^1\xbm$, we have ${\rm span}\{f\circ T^k:\:k\in\Z\}$ dense in $L^1\xbm$. If the answer to Thouvenot's problem is positive and the $L^1$-simplicity can be realized by a continuous function $f$ in each uniquely ergodic system, then (cf.\ the proof of \cref{lmrs17})  to prove Sarnak's conjecture we need to check~\eqref{sarnakcon} for $f$ (at each point).
\item
Suppose that the continuous and discrete part of the maximal spectral type of $U_T$ are realized by $f\in C(X)$ and $g\in C(X)$, respectively. Then, by elementary spectral theory, $F=f+g\in C(X)$ realizes the maximal spectral type of $U_T$ and, clearly, it suffices to check that \eqref{sarnakcon} holds both for $f$ and $g$ (at each point) to see that it holds for $F$ (at each point).
\item
\cref{lmrs17} has a natural extension to uniquely ergodic homeomorphisms $T$ such that $U_T$ has non-trivial multiplicity. All we need to know is that $L^2\xbm$ has a decomposition into cyclic spaces: $L^2\xbm=\bigoplus_{k\geq1}\Z(f_k)$ with $f_k\in C(X)$ and check~\eqref{sarnakcon} for these generators.\footnote{Recall however that it is open whether for an arbitrary automorphism $T$ on $\xbm$, where $X$ is a compact metric space there are continuous functions $f_k$, $k\geq 1$ such that $L^2\xbm=\bigoplus_{k\geq1}\Z(f_k)$ and $\sigma_{f_{k+1}}\ll\sigma_{f_k}$, $k\geq1$, see e.g.\ \cite{MR2576267} for more details.} We will find such functions in the next section in case of the dynamical systems given by the Rudin-Shapiro type sequences.\footnote{Recall that in the general case of dynamical systems given by the Rudin-Shapiro type sequences, this multiplicity is of the form $2^k$, $k\geq 1$~\cite{MR937955}.}
\end{enumerate}
\end{Remark}

\subsection{Applications}\label{aplikacje}
\paragraph{Generalized Morse sequences over $A=\{0,1\}$}

\begin{Prop}
Let $x$ be a generalized Morse sequence over $A=\{0,1\}$. Then Sarnak's conjecture holds for $(S,X(x))$ if and only if~\eqref{sarnakcon} holds (at each point) for $f(y)=(-1)^{y[0]}$.
\end{Prop}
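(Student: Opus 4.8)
The plan is to use the spectral machinery from \cref{lmrs17} together with the explicit description of generalized Morse systems as $\Z/2\Z$-extensions of odometers. The ``only if'' direction is immediate: if Sarnak's conjecture holds for $(S,X(x))$, then in particular~\eqref{sarnakcon} holds for the continuous function $f(y)=(-1)^{y[0]}$ at each point. So the content is in the ``if'' direction, and the key point I would establish is that for a generalized Morse sequence over $A=\{0,1\}$, the function $f(y)=(-1)^{y[0]}$ realizes the maximal spectral type of the Koopman operator $U_S$ on $L^2(X(x))$ --- equivalently, generates a cyclic space equal to all of $L^2(X(x))$ under $U_S$ and (in the non-simple case) that a full cyclic decomposition can be built continuously from $f$.

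First I would recall the structure from \cref{se:morsecoc} and \cref{mrsna}: via \cref{lmo2}, $(S,X(x))$ is topologically isomorphic to a continuous $\Z/2\Z$-extension $(S_\varphi, X(\widehat x)\times \Z/2\Z)$, and measure-theoretically to $(T_\psi, X\times \Z/2\Z)$ where $T$ is the associated $\{n_t\}$-odometer and $\psi$ is the Morse cocycle. Under this identification, the function $(-1)^{y[0]}$ corresponds (up to the isomorphism) to a function of the form $\mathbbm{1}_X \otimes j$ where $j$ is the nontrivial character of $\Z/2\Z$; more precisely, it lies in the ``$j$-isotypic'' summand $L^2(X,m_X)\otimes j$ of $L^2(X\times\Z/2\Z)$. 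Since $L^2(X\times\Z/2\Z) = (L^2(X)\otimes\mathbbm{1}) \oplus (L^2(X)\otimes j)$, and the first summand is the $L^2$-space of the odometer factor which has (rational) discrete spectrum, while the second carries the ``interesting'' part of the spectrum of the Morse system. The standard spectral theory of Morse systems (\cite{MR0239047,MR970577}, see also the references in \cref{mrsna}) tells us that $U_{T_\psi}$ restricted to $L^2(X)\otimes j$ has simple spectrum and this summand is the cyclic space generated by $\mathbbm{1}\otimes j$; moreover the discrete part on $L^2(X)\otimes\mathbbm{1}$ is generated by a continuous function on $X(\widehat x)$ (a character/eigenfunction coming from the odometer). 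Hence, by part (C) of \cref{uwfunkcja} applied to the continuous-plus-discrete decomposition, a single continuous function of the form $f + (\text{eigenfunction})$ realizes the maximal spectral type, OR, by part (D) of \cref{uwfunkcja}, $L^2$ decomposes as $\Z(f_1)\oplus\Z(f_2)$ with $f_1$ the eigenfunction generating the discrete part and $f_2 = f = (-1)^{y[0]}$ generating the complementary simple summand, both continuous.

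Then I would invoke (the multiplicity $\leq 2$ version of) \cref{lmrs17}, i.e.\ part (D) of \cref{uwfunkcja}: to conclude Sarnak's conjecture for $(S,X(x))$ it suffices to verify~\eqref{sarnakcon} at each point for $f_1$ and for $f_2 = f$. For $f = (-1)^{y[0]}$ this is the hypothesis. For the eigenfunction $f_1$ generating the discrete part: it is a continuous eigenfunction with eigenvalue a root of unity (the odometer has rational discrete spectrum), so $f_1(S^n y) = c(n) f_1(y)$ where $c$ is periodic; then $\frac1N\sum_{n\le N} f_1(S^n y)\mob(n) = f_1(y)\cdot\frac1N\sum_{n\le N} c(n)\mob(n)\to 0$ by aperiodicity of $\mob$ (as noted after \cref{nowauwaga} for general odometers, every finite factor, hence every odometer, satisfies~\eqref{sarnakcon}). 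Combining, \eqref{sarnakcon} holds on a set of functions whose cyclic spaces span a dense subspace of $L^2$, and the density/approximation argument in the proof of \cref{lmrs17} upgrades this to all of $C(X(x))$.

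The main obstacle I anticipate is the spectral-theoretic input: establishing cleanly that for \emph{every} generalized Morse sequence over $\{0,1\}$ the Koopman operator has multiplicity at most two with the non-discrete summand simple and cyclically generated by $(-1)^{y[0]}$, \emph{and} that a continuous generator for the discrete part is available. Simplicity of the ``non-group'' part of the spectrum for binary generalized Morse systems is classical, but one must be careful about the (possible) rational eigenvalues coming from a nontrivial height $h$ of the substitution/Morse sequence (cf.\ \cref{nowauwaga}) --- these still form a cyclic (finite) group of eigenvalues generated by $e^{2\pi i/h}$, realized by a continuous eigenfunction, so part (C)/(D) of \cref{uwfunkcja} still applies, but this case distinction is exactly where care is needed. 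Everything else --- the topological isomorphism, the identification of $(-1)^{y[0]}$ with $\mathbbm{1}\otimes j$, and the trigonometric-polynomial approximation --- is routine given the tools already assembled in the paper.
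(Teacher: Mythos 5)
Your proposal follows essentially the same route as the paper: both reduce to \cref{lmrs17} together with \cref{uwfunkcja} (C)/(D), using that $f(y)=(-1)^{y[0]}$ realizes the (simple) continuous part of the maximal spectral type of $U_S$ while the discrete part comes from the underlying odometer and is handled via continuity of eigenfunctions and the aperiodicity of $\mob$ (the ``only if'' direction being trivial in both). The one imprecision is your reference to ``the eigenfunction $f_1$ generating the discrete part'': the discrete part of an odometer is generated by infinitely many eigenvalues, so no single eigenfunction realizes it --- the paper repairs exactly this point by taking a continuous $\ell^1$-combination $g=\sum_{i\geq1}a_ig_i$ with all $a_i\neq0$ of the countably many continuous eigenfunctions (each of which satisfies~\eqref{sarnakcon} by the argument you give), and with that substitution your proof coincides with the paper's.
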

\begin{proof}
Recall that $(S,X(x))$ is uniquely ergodic~\cite{MR0239047} (with the unique invariant measure $\mu_x$) and has simple spectrum~\cite{MR638749}. As proved in \cite{MR638749} (see also \cite{MR1685402}) $f(y)=(-1)^{y[0]}$ realizes the continuous part of the maximal spectral type of $U_S$ (on $L^2(X(x),\mu_x)$). Moreover, the discrete part is given by the equicontinuous factor of $(S,X(x))$, which is the odometer determined by $\la_t$, $t\geq 0$. It follows that the eigenfunctions $g_i$, $i\geq1$, are continuous.  If $g=\sum_{i\geq 1}a_ig_i$, each $a_i\neq 0$, $\sum_{i\geq1}|a_i|<+\infty$, then $g$ is a continuous function realizing the discrete part of the maximal spectral type of $U_S$. Since each odometer is a topological inverse limit of systems defined on finitely many points, and for finite systems Sarnak's conjecture holds because of the PNT in arithmetic progressions, therefore $g$ satisifes~\eqref{sarnakcon}. Thus, in view of \cref{lmrs17} and \cref{uwfunkcja} (C), it suffices to prove~\eqref{sarnakcon} for $f$ to obtain the validity of Sarnak's conjecture.
\end{proof}
\begin{Cor}\label{ten:winosek}
Sarnak's conjecture holds for the dynamical systems given by Kakutani sequences.
\end{Cor}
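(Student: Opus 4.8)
The plan is to deduce this directly from the Proposition stated just above, whose hypothesis is a statement already available in the literature. First I would recall that a Kakutani sequence is, by definition, a generalized Morse sequence $x=b^0\times b^1\times\ldots$ over $A=\{0,1\}$ with every $b^t$ belonging to $\{00,01\}$ (see \cref{mrsna}); in particular it is a generalized Morse sequence over $\{0,1\}$, so the Proposition above applies verbatim to $(S,X(x))$ and reduces Sarnak's conjecture for $(S,X(x))$ to checking that~\eqref{sarnakcon} holds at \emph{each} point $y\in X(x)$ for the single continuous function $f(y)=(-1)^{y[0]}$.

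The second step is to quote \cite{MR3043150,MR2981162}, where precisely this convergence is proved for $f(y)=(-1)^{y[0]}$. The one subtlety is that the Proposition requires~\eqref{sarnakcon} at \emph{every} point of $X(x)$, not merely $\mu_x$-almost everywhere; as indicated in \cref{UWAGA:2}, the estimates of \cite{MR3043150,MR2981162} are uniform in the choice of the starting point, which upgrades the statement to all $y\in X(x)$. Feeding this into the Proposition gives Sarnak's conjecture for $(S,X(x))$.

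So the argument itself is very short; the content lies in the Proposition above (which uses \cref{lmrs17} together with the simplicity of the spectrum and the continuity of the eigenfunctions of $(S,X(x))$, as in \cite{MR638749,MR1685402}) and in the external analytic input of \cite{MR3043150,MR2981162}. The only place I would spend any care is verifying the uniformity of the bounds of \cite{MR3043150,MR2981162} in the initial point, i.e.\ that their block-by-block control of the sums $\frac1N\sum_{n\leq N}(-1)^{y[n]}\mob(n)$ along the Morse hierarchy does not degrade with the position of the window near coordinate~$0$; once that is in hand there is no genuine obstacle, the corollary being a direct specialization of the Proposition.
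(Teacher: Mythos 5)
Your proposal is correct and follows exactly the paper's own route: specialize the preceding Proposition (Kakutani sequences being generalized Morse sequences over $\{0,1\}$) and invoke the external results for $f(y)=(-1)^{y[0]}$, with the uniformity-in-the-starting-point caveat that the paper itself records in the footnote to \cref{UWAGA:2}. Nothing to add.
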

\begin{proof}
In view of the above proposition, it suffices to prove~\eqref{sarnakcon} for $f$ which was done in~\cite{MR3043150,MR2981162} (cf.\ \cref{UWAGA:2}).
\end{proof}

\paragraph{Rudin-Shapiro type sequences}

Recall that the classical Rudin-Sharpiro sequence $x\in\{0,1\}^{\N}$ is defined in the following way:
\begin{itemize}
\item
take the fixed point of the substitution $a\mapsto ab$, $b\mapsto ac$, $c\mapsto db$ and $d\mapsto dc$,
\item
use the code $a,b\mapsto 0$ and $c,d\mapsto 1$ to pass to the space of {0-1}-sequences (the map arising from this code yields a topological isomorphism of the relevant subshifts on four and two letters).
\end{itemize}
The multiplicity of the corresponding dynamical system on the continuous part of the spectrum is equal to~2~\cite{MR937955,MR2590264}. It follows from \cite{MR937955} and \cref{uwfunkcja}~(D) that, in order to obtain Sarnak's conjecture for the corresponding subshift, we need to check~\eqref{sarnakcon} for two continuous functions
 $f\cdot \raz_{D^2_0}$ and $f\cdot \raz_{D^2_1}$ (cf.\ \cref{f13}). It follows immediately from the definition of $D^2_0$ and the recognizability of substitutions \cite{Mosse:1992}
that $\raz_{D^2_0}$ is a continuous function depending on a finite number of coordinates. Therefore, to obtain Sarnak's conjecture for the subshift given by the Rudin-Shapiro sequence, we would have to check~\eqref{sarnakcon} for the elements of the Walsh basis of order $4M$, where $M$ is the constant of recognizability, see~\cref{f13a}. Recall that~\eqref{sarnakcon} was already shown for $f$ in \cite{MaRi2013,Ta1}. Notice that this approach to prove Sarnak's conjecture is completely different from the one presented in the preceding sections (cf.\ \cref{rusha}). The above applies to all Rudin-Shapiro type sequences.

\subsection{Comparison with results of Veech \cite{Ve}}\label{ostatnia1}
In the recent preprint \cite{Ve}, Veech considers a class of systems for which Sarnak's conjecture holds. We will now briefly present his work and then compare it with our results. Assume that $\la_n\geq2$ for $n\geq0$, then set $n_0:=1$ and $n_t:=\prod_{k=0}^{t-1}\lambda_k$, $t\geq 1$ and define
$$
X:={\rm lim inv}_{t\to\infty} \Z/n_t\Z=\{x=(x_t) : 0\leq x_t<n_t, x_{t+1}=x_t\bmod n_t, t\geq1\}.$$
This is a compact, Abelian, monothetic group on which we consider $Tx=x+\theta$ with $\theta=(1,1,\ldots)$.
It is not hard to see that the systems obtained this way are naturally isomorphic to the odometers considered in \cref{se:morsecoc}.
The sequence of towers $\mathcal{D}^t$, $t\geq1$, in the new coordinates is determined by
$$
D^t_0:=\{x\in X :  x_t=0\},$$
and we obtain  pairwise disjoint sets $D^t_0,TD^t_0,\ldots T^{n_t-1}D_0^t$ with $\bigcup_{j=0}^{n_t-1}T^jD^t_0=X$.
Then define
$$
\tau(x):=\min\{t\geq 1: x_{t}\neq n_{t}-1\}.
$$
We have $\lim_{x\to -\theta}\tau(x)=\infty$ and $\tau$ is continuous on $X\setminus\{-\theta\}$.

Let $K$ be a compact group and take $(\Psi(t))_{t\geq1}\subset K$. Set
$$
f(x):=\Psi(\tau(x)).$$
Then $\Psi$ is locally constant on $X\setminus\{-\theta\}$ and $f\in C(X\setminus\{-\theta\},K)$. There are some assumptions on the sequence $\Psi$ made in \cite{Ve}:
\begin{enumerate}[(i)]
\item
\text{$\lim_{t\to\infty} \Psi(t)$ does not exist},\label{assu-ve}
\item\label{c2}
$\{\Psi(t) : t\geq 1\}$ generates a dense subgroup of $K$, and so does the set
$\{\Psi(t)\Psi(u)^{-1} : t,u\geq 1\}$,
\item\label{c3}
$(\Psi(t))_{t\geq 1}$ is recurrent (that is, every initial block of $\Psi$ repeats infinitely often).
\end{enumerate}
\begin{Remark}
If $K=\Z/2\Z$, the conditions \eqref{c2} and \eqref{c3} are not necessary.
\end{Remark}

Let $M_\Psi\subset K^\Z$ be the closure of all sequences $(f(x+n\theta)_{n\in\Z}$ for $x\in X\setminus\Z\theta$. On $M_\Psi$, we consider the usual shift $S$. Let
$$
m\colon M_\Psi\to K\text{ be given by } m(y)=y[0].
$$
Finally, let $S_m\colon M_\Psi\times K\to M_\Psi\times K$ be the skew product defined as $S_m(y,k)=(Sy,m(y)k)$. Then $S_m$ is a homeomorphism of $M_\Psi\times K$.
\begin{Th}[\cite{Ve}]\label{twvee}
Suppose additionally that the set $\{\lambda_t : t\geq 0\}$ is finite. Then, under the above assumptions, $(S_m,M_\Psi\times K)$ satisfies Sarnak's conjecture.
\end{Th}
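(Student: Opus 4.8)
The plan is to place $(S_m,M_\Psi\times K)$ inside the continuous compact group extension framework of Sections~3--4 and run the joining argument underlying \cref{main}, the one genuinely new ingredient being a replacement for the centralizer‑triviality hypothesis of \cref{fgeSarnak}. First I would observe that, by the recurrence assumption (iii), $(S,M_\Psi)$ is a \emph{regular} Toeplitz system, hence uniquely ergodic, an almost $1$-$1$ extension $\pi\colon M_\Psi\to X$ of the odometer $(T,X)$ and measure‑theoretically isomorphic to it; finiteness of $\{\lambda_t:t\geq0\}$ makes the rational discrete spectrum of $T$ generated by finitely many primes. The underlying Morse cocycle is $\psi:=f\colon X\to K$ (defined off the $T$-orbit of $-\theta$), and $\ov\psi:=\psi\circ\pi=m$ is continuous on all of $M_\Psi$, so \eqref{uciagl} holds and $(S_m,M_\Psi\times K)$ is precisely a continuous $K$-extension $\ov T_{\ov\psi}$ as in \cref{narz}. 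The density assumption (ii) (that $\{\Psi(t):t\geq1\}$ generates a dense subgroup of $K$) gives, by classical Morse theory, ergodicity of $T_\psi$; hence $\mu\otimes m_K$ is ergodic and \cref{rue} yields unique ergodicity of $(S_m,M_\Psi\times K)$. Assumption (i) (that $\lim_t\Psi(t)$ does not exist) forces $\psi$ and its nontrivial quotients to be aperiodic, so that $T_\psi$ has continuous spectrum on the orthocomplement of $L^2(X,\mathcal B,m_X)\otimes\raz_K$ apart from possibly some rational eigenvalues coming from the primes dividing the $\lambda_t$: precisely those characters $\chi\in\widehat K$ for which $\chi\circ\psi$ is cohomologous to an eigenvalue of $T$.

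Next, by \cref{kbsz} it suffices to verify \eqref{sarnakcon} at every point for a linearly dense family in $C(M_\Psi\times K)$, and I would take products $f_0\otimes\chi$ with $f_0\in C(M_\Psi)$ and $\chi\in\widehat K$. If $\chi\circ\psi$ is cohomologous to an eigenvalue of $T$ (in particular if $\chi$ is trivial), then $f_0\otimes\chi$ is measurable with respect to a factor of $(S_m,M_\Psi\times K)$ which is (an a.e.\ $1$-$1$ extension of) an odometer with rational spectrum from finitely many primes, and \eqref{sarnakcon} holds by the odometer argument used already in the remark following \cref{main} (inverse limit of finite systems, PNT in arithmetic progressions, lifting as in \cite{Abdalaoui:2013rm}). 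Otherwise $\chi\circ\psi$ has continuous spectrum, and I must show, for all sufficiently large distinct primes $r,s$ and every $z\in M_\Psi\times K$,
\[
\frac1N\sum_{n\le N}(f_0\otimes\chi)\big((S_m)^{rn}(z,z)\big)\,\ov{(f_0\otimes\chi)\big((S_m)^{sn}(z,z)\big)}\to 0 .
\]
For such $r,s$ we have $T^r,T^s$ ergodic, both isomorphic to $T$ by \eqref{isopowers}. By \eqref{genodo} the projection of $(z,z)$ to $X\times X$ is generic for an ergodic graph joining $(m_X)_R$ of $T$ (viewed between $T^r$ and $T^s$); \cref{dl1} (the odometer is coalescent) lifts this to a unique measure $\ov\rho$ for which the projection of $(z,z)$ to $M_\Psi\times M_\Psi$ is generic; and \cref{narz} then makes $(z,z)$ generic for the relatively independent extension $\widetilde{\ov\rho}$ --- provided the only ergodic joinings of $(T_\psi)^r$ and $(T_\psi)^s$ lying over $(m_X)_R$ are the relatively independent extensions of graph joinings of $T^r\cong T^s$. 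Granting that, the average above tends to $\int_{M_\Psi} f_0\cdot\ov{f_0\circ R}\,dm_X\cdot\int_{K\times K}\chi\otimes\ov\chi\,d(m_K\otimes m_K)=0$ exactly as in the proof of \cref{fgeSarnak}, and then fixing $\lio$ and using linear density concludes.

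It remains to classify the ergodic joinings of the prime powers, and this is the heart of the matter. Since $T$ has rational discrete spectrum, \cref{mkm} and \cref{zastmkm} reduce it to: for all sufficiently large distinct primes $r,s$, no nontrivial normal natural factor of $(T_\psi)^r$ is isomorphic, via a lift of an isomorphism $T^r\to T^s$, to a nontrivial normal natural factor of $(T_\psi)^s$. By \cref{lmo3} these factors are Morse systems over quotients $K/H$, $H\neq K$; those among them with purely discrete spectrum are themselves odometers with finitely‑many‑primes spectrum, so the functions factoring through them have already been dealt with, and if such a factor of $(T_\psi)^r$ is isomorphic to one of $(T_\psi)^s$ the corresponding term in \cref{mkm} contributes $0$ to the correlation above anyway (the character $\chi$, having $\chi\circ\psi$ continuous, cannot be trivial on that $H$, so $\mathbb E(f_0\otimes\chi\mid M_\Psi\times K/H)=0$). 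Thus it suffices to rule out isomorphisms between natural factors with continuous spectrum. For these I would run the ``no $r$-th root'' mechanism from the proof of \cref{fgeSarnak} \emph{without} invoking $K/H$-triviality of the centralizer: an isomorphism lifting $T^r\to T^s$ would, via \cref{centrm}, force a cohomology relation tying $\psi^{(r)}\bmod H$ to $\psi^{(s)}\bmod H'$ up to a continuous group automorphism, and since $r,s$ are distinct primes coprime to every $\lambda_t$ while $\psi$ is aperiodic, evaluating this relation on the towers $\mathcal D^t$ (where $\psi^{(r)}$ is ``$r$ consecutive steps of $\psi$'') yields a contradiction; when the continuous‑spectrum multiplicity is finite one may alternatively argue as in \cref{rusha}, the $r$-th and $s$-th powers multiplying that multiplicity by $r$ and by $s$.

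I expect this last step --- the prime‑power joining/disjointness classification --- to be the main difficulty, precisely because, unlike for bijective substitutions, the centralizer of $(S_m,M_\Psi\times K)$ can be uncountable (as already for Kakutani sequences, cf.\ \cref{UWAGA:2}), so \cref{fgeSarnak} is not available and the cohomological argument distinguishing $T^r$ from $T^s$ must be carried through using only ergodicity, aperiodicity, assumption (i), and the explicit structure of $\tau$ on the odometer (here is where assumptions (ii) and (iii) also enter, to control the quotients $K/H$ and to secure the Toeplitz/unique‑ergodicity picture). A more combinatorial alternative would bypass joinings entirely and estimate the correlation sums directly from the structure of $\tau$, in the spirit of \cite{MaRi2013}; I would keep this in reserve should the joining argument prove stubborn.
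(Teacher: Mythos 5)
There is a genuine gap, and it sits exactly where you yourself locate ``the heart of the matter.'' First, a point of record: the paper does \emph{not} prove \cref{twvee}; it is quoted from Veech's preprint \cite{Ve}, where it is established by Veech's own methods, and the surrounding remark only translates his setting into the paper's vocabulary (semicocycle, regular Toeplitz system, Morse cocycle satisfying \eqref{condj}). The authors moreover state explicitly --- in the footnote of the introduction and in \cref{UWAGA:2} --- that the Section~4 machinery does \emph{not} seem to apply even to the Kakutani subcase of Veech's class, precisely because the centralizer of such systems can be uncountable (they can be rigid, \cite{MR795787}). Your plan is to run that Section~4 machinery anyway, replacing the $G/H$-triviality hypothesis of \cref{fgeSarnak} by an unspecified cohomological computation ``on the towers $\mathcal D^t$'' that is asserted to yield a contradiction. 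That computation is the entire content of the missing step and you do not supply it. Worse, it is not merely unproved but doubtful as stated: the ``no $r$-th root'' argument in the proof of \cref{fgeSarnak} genuinely uses that every element of $C(T_{\psi H})$ is of the form $(T_{\psi H})^k\circ\tau_{gH}$; for a rigid extension the centralizer is a large (uncountable) group, roots of $(T_{\psi H})^s$ cannot be excluded this way, and the desired disjointness of $(T_\psi)^r$ and $(T_\psi)^s$ over $T$ may simply fail, so that \cref{narz} is not applicable. Your fallback via \cref{rusha} is also unsound in this generality: the multiplication of spectral multiplicity by $r$ under the passage $T\mapsto T^r$ is a feature of \emph{Lebesgue} components, not of arbitrary continuous (typically singular) spectrum, and e.g.\ Kakutani systems have simple singular spectrum.

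For the one subcase the paper does handle ($K=\Z/2\Z$, Kakutani sequences, \cref{ten:winosek}), the route is entirely different from yours: the spectral argument of \cref{lmrs17} (simple spectrum, continuous realization of the maximal spectral type by $f(y)=(-1)^{y[0]}$) combined with the \emph{external} analytic input of \cite{MR3043150,MR2981162} that \eqref{sarnakcon} holds for that single function. For general compact $K$ no such input is invoked in the paper, and the theorem rests on \cite{Ve}. If you want a self-contained proof you must either reproduce Veech's argument or actually carry out the joining classification for the specific Morse cocycles satisfying \eqref{condj}, using assumptions (i)--(iii); as written, the proposal assumes the conclusion of that classification rather than proving it.
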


\begin{Remark}
It is not hard to see that in the language of \cite{MR2180227}, the function $f$ is a semicocycle over an odometer (that is, a function continuous on a residual subset of an odometer). It follows from \cite{MR2180227} that the dynamical system given by $(S,M_\Psi)$ is a Toeplitz dynamical system (cf.\ \cref{se:morsecoc}). The system is regular \cite{MR2180227}, hence uniquely ergodic and measure-theoretically isomorphic to the odometer $(T,X)$.

Notice that if $x_t=n_t-1$ then also $x_{k}=n_{k}-1$ for $1\leq k\leq t-1$. It follows immediately that $\tau$ is constant on each $D_i^t$, $0\leq i\leq n_t-2$. Therefore, the cocycle $f\colon X\to K$ defined above is a Morse cocycle (cf.\ \cref{se:morsecoc}). It has the following additional property:
\begin{equation}\label{condj}
f \text{ is constant on }\bigcup_{1\leq j\leq \lambda_{t+1}-1}D_{jn_t-1}^{t+1}.
\end{equation}
Condition~\eqref{condj} yields the class of Morse sequences $x=b^0\times b^1 \times \dots$, where each block $b_t$, $t\geq 0$, is of the form $b^t=ek\ldots k^{|b^t|-1}$ (cf.\ \cref{uw16}). In particular, if $K=\Z/2\Z$, we have $x=b^0\times b^1 \times \dots$, where $b^t=0\ldots0$, $b^t=01\ldots 01$ or $b^t=010\ldots 10$, $t\geq 0$. Notice that Kakutani sequences are of this form.

Notice also that $(S_m,M_\Psi\times K)$ corresponds to $(S_\varphi,X(\widehat{x})\times G)$ defined in \cref{se:morsecoc}, which is, in turn, topologically isomorphic to $(S,X(x))$.

It follows that \cref{twvee} is a significant extension of the validity of Sarnak's conjecture  for Kakutani systems (cf.\ \cref{ten:winosek}) to general compact groups.
\end{Remark}

\small
\bibliography{cala.bib}

\bigskip
\footnotesize

\noindent
S\'ebastien Ferenczi\\
\textsc{Institut de Math\'ematiques de Marseille, CNRS - UMR 7373}\\
\textsc{Case 907 - 163, av.\ de Luminy, F13288 Marseille Cedex 9, France}\\
\noindent
\textit{E-mail address:} \texttt{ssferenczi@gmail.com}

\medskip

\noindent
Joanna Ku\l aga-Przymus\\
\textsc{Institute of Mathematics, Polish Acadamy of Sciences, \'{S}niadeckich 8, 00-956 Warszawa, Poland}\\
\textsc{Faculty of Mathematics and Computer Science, Nicolaus Copernicus University, Chopina 12/18, 87-100 Toru\'{n}, Poland}\par\nopagebreak
\noindent
\textit{E-mail address:} \texttt{joanna.kulaga@gmail.com}

\medskip

\noindent
Mariusz Lema\'{n}czyk\\
\textsc{Faculty of Mathematics and Computer Science, Nicolaus Copernicus University, Chopina 12/18, 87-100 Toru\'{n}, Poland}\par\nopagebreak
\noindent
\textit{E-mail address:} \texttt{mlem@mat.umk.pl}

\medskip

\noindent
Christian Mauduit\\
\textsc{Institut de Math\'ematiques de Marseille, CNRS - UMR 7373}\\
\textsc{Case 907 - 163, av.\ de Luminy, F13288 Marseille Cedex 9, France}\\
\noindent
\textit{E-mail address:} \texttt{mauduit@iml.univ-mrs.fr}

\end{document}